\numberwithin{equation}{section}
\newcommand{\ra}{\rightarrow}
\newcommand{\IMS}{\mathrm{IMS}}
\newcommand{\Ind}{\mathrm{Ind}}
\newcommand{\tr}{\mathrm{tr}}
\newcommand{\Aut}{\mathrm{Aut}}
\newcommand{\mcal}{\mathcal}
\newcommand{\mbb}{\mathbb}
\newcommand{\Z}{\mathbb Z}
\newcommand{\R}{\mathbb R}
\newcommand{\C}{\mathbb C}
\newcommand{\1}{\mathbf{1}}
\newcommand{\mU}{\mathcal{U}}
\newcommand{\mA}{\mathcal{A}}
\newcommand{\mB}{\mathcal{B}}
\newcommand{\mC}{\mathcal{C}}
\newcommand{\mD}{\mathcal{D}}
\newcommand{\mE}{\mathcal{E}}
\newcommand{\mZ}{\mathcal{Z}}
\newcommand{\mM}{\mathcal{M}}
\newcommand{\mN}{\mathcal{N}}
\newcommand{\mQ}{\mathcal{Q}}
\newtheorem{theorem}{Theorem}[section]
\newtheorem{lemma}[theorem]{Lemma}
\newtheorem{cor}[theorem]{Corollary}
\newtheorem{definition}[theorem]{Definition}
\newtheorem{proposition}[theorem]{Proposition}
\newtheorem{remark}[theorem]{Remark}
\newtheorem{example}[theorem]{Example}
\begin{document}
\title[On various distances between subalgebras]{On various notions of distance
  between subalgebras of operator algebras}

\author[V P Gupta]{Ved Prakash Gupta} \author[S
  Kumar]{Sumit Kumar} \address{School of Physical Sciences, Jawaharlal Nehru University, New Delhi, INDIA}
\email{vedgupta@mail.jnu.ac.in} \email{sumitkumar.sk809@gmail.com}

\subjclass[2020]{46L05,47L40}

\keywords{Inclusions of $C^*$-algebras,
  finite-index conditional expectations, $C^*$-basic construction,
  Hilbert $C^*$-modules, intermediate subalgebras, Kadison-Kastler
  distance, tracial von Neumann algebras, Christensen distance,
  Mashood-Taylor distance, group algebras, group $C^*$-algebras, group
  von Neumann algebras}

\thanks{The second named author was  supported
  by the Council of Scientific and Industrial Research (Government
  of India) through a Senior Research Fellowship with
  CSIR File No. {\bf 09/0263(12012)/2021-EMR-I} }
\maketitle
\begin{abstract}
  Given  any irreducible inclusion $\mB \subset \mA$ of unital
$C^*$-algebras with a finite-index conditional expectation $E: \mA \to
\mB$, we show that the set of $E$-compatible intermediate
$C^*$-subalgebras is finite, thereby generalizing a finiteness result
of Ino and Watatani (from \cite{IW}). A finiteness result for a
certain collection of intermediate $C^*$-subalgebras of a
non-irreducible inclusion of simple unital $C^*$-algebras is also
obtained, which provides a $C^*$-version of a finiteness result of
Khoshkam and Mashood (from \cite{KM}).

Apart from these finiteness results, comparisons between
various notions of distance between subalgebras of operator algebras
by Kadison-Kastler, Christensen and Mashood-Taylor are made. Further,
these comparisons are used satisfactorily to provide some concrete
calculations of distance between  operator algebras associated to two
distinct subgroups of a given discrete group. \end{abstract}

\section{Introduction}
 Watatani (in \cite{Wat2}) (resp., Teruya and Watatani (in \cite{TW}))
 proved that the lattice of intermediate subfactors of an irreducible
 finite-index subfactor of type $II_1$ (resp., type $III$) is
 finite. This was then generalized to the $C^*$-context by Ino and
 Watatani (\cite{IW}), who proved that the set of intermediate
 $C^*$-subalgebras of an irreducible inclusion of simple unital
 $C^*$-algebras with a finite-index conditional expectation is finite
 (\cite[Corollary 3.9]{IW}). Further, Longo (in \cite{Longo}) obtained a
 bound for the cardinality of the lattice of intermediate subfactors
 of any finite-index irreducible inclusion of factors (of type $II_1$
 or $III$). More recently, a similar bound was obtained for the
 cardinality of the lattice of intermediate $C^*$-algebras of a
 finite-index irreducible inclusion of simple unital $C^*$-algebras by
 Bakshi and the first named author in \cite{BG}, which was achieved by
 introducing the notion of (interior) angle between intermediate
 $C^*$-subalgebras. This bound was further improved by Bakshi et
 al. in \cite{BE}.\smallskip

One of the highlights of this paper is the following generalization of
Ino-Watatani's finiteness result (from \cite{IW}):\smallskip

\noindent{\bf \Cref{IMS-finite}.} {\em Let $\mB \subset {\mA}$ be an
  irreducible inclusion of unital $C^*$-algebras with a finite-index
  conditional expectation $E: {\mA} \to \mB$. Then, the set $\IMS(\mB,
  {\mA}, E)$ consisting of $E$-compatible intermediate $C^*$-subalgebras
  of $\mB \subset \mA$ is finite.}
\smallskip

Ino-Watatani's proof of finiteness was a clever compactness argument
based on an appropriate estimate for $\|e_\mC - e_\mD\|$ for any two
($E$-compatible) intermediate $C^*$-subalgebras $\mC$ and $\mD$ (\cite[Lemma
  3.3]{IW}), and, a perturbation result established by them in
\cite[Theorem 3.8]{IW}. \Cref{IMS-finite}  is an  immediate consequence of the
following more general result, whose proof is an appropriate adaptation of
the compactness argument of Ino-Watatani (\cite{IW}):\smallskip

  \noindent{\bf \Cref{F1-F2-finite}.} {\it Let $\mB \subset {\mA} $ be an
    inclusion of unital $C^{*}$-algebras with a finite-index
    conditional expectation $E : \mA \rightarrow \mB$. If one
    (equivalently, any) of the algebras $\mcal{C}_{\mA}(\mB)$, $\mcal{Z}(\mB)$
    and $\mcal{Z}(\mA)$ is finite dimensional, then the collection
    \(
    \mathcal{F}(\mB , \mA, E):= \{\mC \in \IMS(\mB, \mA, E) : \mC_{\mA}(\mB)  \subseteq \mC_{\mA}(\mC) \cup \mC \}
    \)
    is  finite.
  }
\smallskip

It is noteworthy that in the above mentioned finiteness results
(except \Cref{F1-F2-finite}), irreducibility of the initial inclusion
was crucial. Somehow, not much is known about the finiteness of the
lattice of intermediate subalgebras of non-irreducible inclusions.
Interestingly, very recently, while looking for such finiteness
results for the full lattice of intermediate von Neumann subalgebras
of non-irreducible inclusions, the compactness argument of Ino and
Watatani was also employed successfully (in \cite{BG}) by Bakshi and
the first named author to prove the following:\smallskip

  \noindent{\bf Theorem.}  \cite[Theorem 6.4]{BG} {\it Let
    $\mathcal{N} \subset \mathcal{M}$ be an inclusion of von Neumann
    algebras with a faithful normal tracial state on $\mathcal{M}$
    such that $\mZ(\mathcal{N})$ is finite dimensional and the trace
    preserving normal conditional expectation from $\mathcal{M}$ onto
    $\mathcal{N}$ has finite Watatani index. If the relative commutant
    $\mathcal{N}'\cap \mathcal{M}$ equals either $\mZ(\mathcal{N})$ or
    $\mZ(\mathcal{M})$, then the lattice  consisting of intermediate von Neumann
    subalgebras of $\mathcal{N} \subset \mathcal{M}$ is
    finite.}\smallskip

 Prior to this,  Khoshkam and Mashood, in \cite[Theorem 1.3]{KM}, had shown
 that for any finite-index subfactor $N \subset M$ of type $II_1$, the
 subcollections \( \mathcal{L}_1(N \subset M):=\{P \in \mathcal{L}(N
 \subset M) : N'\cap M \subset P\} \) and \( \mathcal{L}_2(N \subset
 M):=\{P \in \mathcal{L}(N \subset M) : N'\cap M = P'\cap M\} \) are
 both finite, where $\mathcal{L}(N \subset M)$ denotes the lattice of
 intermediate subfactors of the inclusion $N \subset M$.

   Note that \Cref{F1-F2-finite} (as highlighted above) is a
   $C^*$-version of \cite[Theorem 1.3]{KM} for non-irreducible
   inclusions of non-simple $C^*$-algebras. Moreover, in
   \Cref{finiteness-results} itself, we also prove another variant of
   a $C^*$-version of \cite[Theorem 1.3]{KM} for non-irreducible
   inclusions of simple unital $C^*$-algebras, which reads as follows:\smallskip
  
  \noindent {\bf \Cref{L1-finite}.} {\it Let $\mB \subset \mA $ be an
    inclusion of simple unital $C^{*}$-algebras with a finite-index
    conditional expectation from $\mA $ onto $\mB$. Then, the
    sublattice $\mcal{I}_1(\mB \subset \mA):=\{ \mC \in \mcal{I}(\mB
    \subset \mA) : \mC_{\mA}(\mB) \subseteq \mC\}$ is finite.
  }\smallskip

A few words regarding the techniques employed to achieve the above mentioned finiteness results:

\Cref{F1-F2-finite} is achieved by directly employing the
  compactness argument of \cite{IW}, wherein the main ingredients
  include a basic observation from \cite{GS} (that $e_\mC \in \mA_1$ for
  any $\mC \in \IMS(\mB, \mA, E)$), an estimate for $\|e_\mC - e_\mD\|$ from
  \cite{IW} and a perturbation result from \cite{Dickson}. On the
  other hand, \Cref{L1-finite} is proved on the similar lines by
  first showing that $e_\mC \in \mA_1$ for any intermediate
  $C^*$-subalgebra $\mC \in \mcal{I}_1$ (\Cref{LCA-in-LBA}), then
  obtaining Ino-Watatani's estimate for $\|e_\mC - e_\mD\|$
  (\Cref{C-D-inequality}), where $\mC$ and $\mD$ are two intermediate
  $C^*$-subalgebras in $\mcal{I}_1$; and, finally, by exploiting a
  perturbation result by Dickson (from \cite{Dickson}).

  The above mentioned finiteness results are all achieved in
  \Cref{finiteness-results}. Prior to \Cref{finiteness-results}, we
  devote a short section (\Cref{prelims}) on preliminaries and another
  short section (\Cref{Kadison-Kastler}) recalling and proving some
  basic (yet interesting) observations related to the Kadison-Kastler
  distance. One interesting observation being that for any inclusion
  $\mB \subset \mA$ of unital $C^*$-algebras with a finite-index
  conditional expectation $E: \mA \to\mB$, there exists an $\alpha >
  0$ such that the set of unitaries
  \[
  \{u \in \mN_{\mA}(\mB) : \| u - \1 \| < \alpha \} \subseteq \bigcap \Big\{
  \mN_{\mA}(\mC): \mC \in  \mcal{F}(\mB , \mA, E) \Big\}
  \]
  - see \Cref{u-in-normalizer}. A slightly  stronger version
  holds for inclusions of simple unital $C^*$-algebras - see
  \Cref{u-in-normalizer-simple}. These two observations, respectively,
  are immediate consequences of the above mentioned perturbation
  results by Ino-Watatani and Dickson; and, a very simple minded
  inequality
  \[
  d_{KK}(\mB , u\mB u^*) \leq 2 d(u,
  \mN_{\mA}(\mB) \leq 2 \|u -1\|,
  \]
  obtained in \Cref{u-1-relation}, for
  every unitary $u$ in $\mA$.
    
    Then, in \Cref{comparisons}, we first recall and make some basic
    observations related to the different notions of distance between
    subalgebras of $C^*$-algebras and tracial von Neumann algebras,
    introduced by Christensen (in \cite{Ch1, Ch2, Ch3}) and Mashood
    and Taylor (in \cite{MT}). The essence of this section lies in
    making comparisons between them and the Kadison-Kastler distance;
    and, a desirable (and useful) observation made in:\smallskip

  \noindent {\bf \Cref{dMT-SOT}} {\it Let $\mM$ be a  von
    Neumann algebra with a faithful normal tracial state.  Then,
\[
d_{MT}\big(P, Q\big) = d_{MT}\Big(P, \overline{Q}^{S.O.T.}\Big) =
d_{MT}\Big(\overline{P}^{S.O.T.}, \overline{Q}^{S.O.T.}\Big),
\] 
for any two unital $*$-subalgebras $P$ and $Q$ of  $\mM$.    
  }
\smallskip

Finally, in \Cref{calculations}, we make use of the comparisons made
in \Cref{comparisons} and provide some concrete calculations of
various distances between subalgebras associated to distinct subgroups
(via $C^*$-crossed products, Banach group algebras and group von
Neumann algebras) of a given discrete group $G$. Interestingly, in all
cases, they turn out to be distance 1 apart - see
\Cref{distances-crossed-product-subalgebras},
\Cref{distance-Banach-gp-algebras} and \Cref{distances-gp-vNas}.

\section{Preliminaries}\label{index-basic-construction}\label{prelims}
We briefly recall the notions of finite-index conditional
expectations, Watatani's $C^*$-basic construction and compatible
intermediate $C^*$-subalgebras.

\subsection{Watatani's $C^*$-basic construction}
For any inclusion $\mB \subset \mA$ of unital $C^*$-algebras with common
unit and a faithful conditional expectation $E: \mA \to \mB$, $\mA$ becomes
a pre-Hilbert $\mB$-module with respect to the $\mB$-valued inner product
$\langle \cdot, \cdot \rangle_{\mB} : \mA \times \mA \to \mB$ given by $\langle
x, y \rangle_{\mB} = E(x^* y)$ for $x, y \in \mA$. We denote by $\mcal{E}$
the Hilbert $\mB$-module completion of $\mA$.

In order to distinguish the elements of the $C^*$-algebra $\mA$ and the
pre-Hilbert $\mB$-module $\mA$, following \cite{Wat}, we consider the
inclusion map $\eta : \mA \to \mA \subset \mcal{E}$. Thus,
\begin{equation}\label{eta-norm}
  \| \eta(x) \| := \|E(x^*x)\|^{1/2}\leq \|x\|
\end{equation}
for all $x \in \mA$. Let $\mathcal{L}_{\mB}(\mcal{E})$ denote the unital
$C^*$-algebra consisting of adjointable maps on $\mcal{E}$. Every
member of $\mathcal{L}_{\mB}(\mcal{E})$ is a $\mB$-module map (see \cite[$\S
  2.10$]{Wat}). Further, there is a natural $C^*$-embedding $\lambda: \mA
\to \mathcal{L}_{\mB}(\mcal{E})$ satisfying
\[
\lambda(a)\eta(x)=\eta(ax) \text{ for all } a,x \in \mA.
\]
Thus, we can identify $\mA$ with its image $\lambda(\mA)$ in
$\mcal{L}_{\mB}(\mcal{E})$. Further, there is a natural projection
$e_1 \in \lambda(\mB)'\cap \mathcal{L}_{\mB}(\mcal{E})$
(called the Jones projection corresponding to $E$) satisfying, via the
above identification, the relations
\[
e_1(\eta(x)) =\eta(E(x))\text{ and } e_1 x e_1 = E(x) e_1
\]
for all $x \in \mA$.  The Watatani's $C^*$-basic construction for the
inclusion $\mB \subset \mA$ with respect to the conditional
expectation $E$ is  defined as the $C^*$-algebra
\[
\mA_1=\overline{\mathrm{span}}\{ x e_1 y : x ,y \in \mA\} \subseteq
\mathcal{L}_{\mB}(\mcal{E}).
\]
Like the `Jones' basic construction for
a subfactor, Watatani's basic construction also satisfies a natural
universal property - see \cite[Proposition 2.2.11]{Wat}.  \smallskip

\noindent {\bf Some standard notations:} Recall that for an inclusion $\mB \subset \mA$ of $C^*$-algebras, the
centralizer of $\mB$ in $\mA$ is defined by
\[
\mcal{C}_{\mA}(\mB) = \{ x \in \mA: bx = xb \text{ for all } b\in \mB\};
\]
and, if $\mB \subset \mA$ are unital (with common unit), the normalizer of $\mB$ in $\mA$ is defined by
\[
\mcal{N}_{\mA}(\mB) = \{ u \in \mcal{U}(\mA): u \mB u^* = \mB\}.
\]
The centralizer $\mcal{C}_{\mA}(\mB)$ is a $C^*$-subalgebra of $\mA$
and is also denoted by $\mB'\cap \mA$ and called the relative
commutant of $\mB$ in $\mA$. The normalizer $\mN_{\mA}(\mB)$
is a closed subgroup of $\mcal{U}(\mA)$.

\subsection{Finite-index conditional expectations and index}
For any inclusion $\mB \subset \mA$ of unital $C^*$-algebras, a
conditional expectation $E: \mA \to \mB$ is said to have finite index
if there exists a finite set $\{\lambda_i: 1 \leq i \leq n\} \subset
\mA$ such that $x = \sum_{i=1}^n E(x \lambda_i) \lambda_i^*$ for all
$x \in \mA$. Such a set $\{\lambda_i\}$ is called a quasi-basis for
$E$ and the Watatani index of $E$ is defined as $\Ind_W(E) =
\sum_{i=1}^n \lambda_i \lambda_i^*$, which is a positive invertible
element in $\mZ(\mA)$ and is independent of the quasi-basis
$\{\lambda_i\}$.  Every such $E$ is faithful and preserves the unit, i.e., $1_{\mB} = E(1_{\mA}) = 
1_{\mA}$.

The set of finite-index conditional expectations from $\mA$ onto $\mB$
is denoted by $\mE_0(A, B)$. If $\mZ(\mA) = \C$ and $\mE_0(A, B)\neq
\emptyset$, then an $F \in \mE_0(\mA, \mB)$ is said to be minimal if
\[
\mathrm{Ind}_W(F) = \inf \{
\Ind_W(E): E \in \mE_0(\mA, \mB)\}.
\]
In some nice cases, a minimal conditional expectation
exists  and is unique as well.

\begin{remark}\label{index-defn}
Let $\mB\subset \mA$ be an inclusion of
unital $C^*$-algebras with $\mE_0(\mA, \mB) \neq \emptyset$ and 
$\mZ(\mA) = \C = \mZ(\mB)$. 
\begin{enumerate}
  \item There exists a unique minimal conditional expectation (denoted usually by) $E_0 : \mB \to \mA$. (\cite[Theorem 2.12.3]{Wat})
\item 
 The index of such an inclusion is defined as\hfill \hspace*{5mm}
 (\cite[Definition 2.12.4]{Wat})
  \begin{equation}
    [\mA : \mB]_0 = \Ind_W(E_0) = \min \{ \Ind_W(E): E \in \mE_0(\mA, \mB).
  \end{equation}
\end{enumerate}
\end{remark}

\subsubsection{Compatible intermediate $C^*$-subalgebras}

For any inclusion $\mB \subset \mA$ of unital $C^*$-algebras (with
common unit), let $\mathcal{I}(\mB \subset \mA)$ denote the collection
of intermediate $C^*$-subalgebras of the inclusion $\mB \subset \mA$
and let $\mcal{E}(\mA, \mB)$ denote the set of conditional
expectations from $\mA$ onto $\mB$. Further, for any $E \in
\mcal{E}(\mA, \mB)$, following \cite{IW} (also see \cite{BG, GS}), let
\[
\mathrm{IMS}(\mB, \mA, E):= \big\{\mC \in \mcal{I}(\mB \subset \mA):
\text{ there exists an } F\in \mcal{E}( \mA, C) \text{ such that }
E_{\restriction_C}\circ F = E\big\}.
\]
If $E$ has finite index, then for any $\mC \in \mathrm{IMS}(\mB, \mA,
E)$, a compatible conditional expectation from $\mA$ onto $\mC$ is
unique and has finite index (see \cite[Page 471]{IW}).

\subsubsection{Dual conditional expectation and iterated tower of basic constructions}

\begin{remark}\label{Dual-exp} \label{A1=span}
  Let $\mB\subset \mA$ be an inclusion of unital $C^*$-algebras and
  $E: \mA\rightarrow \mB$ be a finite-index conditional
  expectation. Let $\mA_1$ denote the $C^*$- basic construction of
  $\mB\subset \mA$ with respect to the conditional expectation $E$ and
  $e_1$ denote the corresponding Jones projection. The following facts
  are noteworthy and shall be needed ahead:
  \begin{enumerate}
\item $\sum_i \lambda_i e_1 \lambda_i^* = 1$ for any quasi-basis $\{\lambda_i\}$ of $E$.
  \item\(
\mA_1 = \mathrm{span}\{x e_1 y : x, y \in \mA\} =
C^*(\mA, e_1) = \mathcal{L}_{\mB}(\mA)\). \hfill (\cite[Proposition 1.3.3]{Wat})
  \item There exists a unique finite-index
  conditional expectation $\widetilde{E}: \mA_1 \rightarrow \mA$ satisfying
\[
\widetilde{E}(x e_1 y)= (\Ind_W(E))^{-1}xy
\]
for all $x, y \in \mA$.  \hspace{0.3cm}(\cite[Proposition
  1.6.1]{Wat})\\ ($\widetilde{E}$ is called the dual conditional
expectation of $E$ and is often  denoted by $E_1$.) 
 
\item By iteration, one obtains a tower of unital
  $C^*$-algebras  (see \cite[$\S 3.1$]{JOPT}, \cite[Proposition 3.18]{KAW} and
  \cite[$\S 2.3$]{BG}))
\[
\mA_{-1}:=\mB \subset \mA_0:=\mA \subset \mA_1 \subset \mA_2 \subset \cdots \subset
\mA_k \subset \cdots 
  \]
with finite-index conditional expectations $E_k : \mA_k \to
\mA_{k-1}$ and Jones projections $e_k \in \mA_k$, $k \geq
1$ with $\mA_{k} = C^*(\mA_{k-1} , e_k)$ for all $k \geq
1$. Also, $E_{k+1}$ is the dual of $E_k$ for all $k \geq 0$, with $E_0:= E$.

\item If $\Ind_W(E ) \in B$, then $ \Ind_W(\widetilde{E
})= \Ind_W(E ) $. Morevoer, if $\mA$ and $\mB$ are both simple and $E$ is minimal,
  then $\mA_1$ is simple and $\widetilde{E}$ is
  minimal. (\cite[Cor. 2.2.14, Prop. 2.3.4]{Wat} and \cite[Cor.
    3.4]{KAW})

\item (Pushdown Lemma.) For each $x_1\in \mA_1$, there exists a unique $x_0\in\mA$
 such that $x_1e_1= x_{0}e_1$ and $x_0$ is given by $x_0 = \Ind_{W}(E)
 \widetilde{E}(x_1 e_1)$. \hfill \hspace{0.3cm}(\cite[Lemma 3.7]{JOPT})
  \end{enumerate}
  \end{remark}

In \cite[Proposition 3.2]{BG}, it was shown (using the so-called
``Fourier transforms'') that if $\mB \subset \mA$ is an inclusion of
simple unital $C^*$-algebras with $\mE_0(\mA, \mB) \neq \emptyset$,
then $\mB'\cap \mA_{k} \cong \mA'\cap \mA_{k+1}$ (as vector spaces)
for all $k \geq 0$. In particular, if $\mB \subset \mA$ is irreducible
then so is $\mA \subset \mA_1$. It turns out that the last inference
is true for more general inclusions and will be needed ahead.

\begin{lemma}\label{irreducible-tower}
Let $\mB \subset \mA$ be an  inclusion of unital
$C^*$-algebras with a finite-index conditional expectation $E: \mA \to
\mB$. Then, 
\begin{enumerate}
\item $ E_{k}(\mA_{k-2}'\cap \mA_{k})= \mA_{k-2}' \cap \mA_{k-1}  $ for
  every $k \geq 1$; and,
  \item if, in addition, the inclusion $\mB \subset \mA$ is
    irreducible, then the inclusions $\mA_{k-1}\subset \mA_{k}$, $k
    \geq 1$ are all irreducible.
\end{enumerate}
\end{lemma}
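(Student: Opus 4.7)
For part (1), I would verify the two inclusions separately. The inclusion $E_k(\mA_{k-2}'\cap\mA_k)\subseteq \mA_{k-2}'\cap\mA_{k-1}$ is immediate from $\mA_{k-1}$-bimodularity of $E_k$: for any $x\in\mA_{k-2}'\cap\mA_k$ and $c\in\mA_{k-2}\subseteq\mA_{k-1}$, one has $cE_k(x)=E_k(cx)=E_k(xc)=E_k(x)c$. For the reverse inclusion, given $y\in\mA_{k-2}'\cap\mA_{k-1}$, I would take $x:=\mathrm{Ind}_W(E_{k-1})\,e_k\, y \in \mA_k$. Because $\mathrm{Ind}_W(E_{k-1})\in\mZ(\mA_{k-1})$ commutes with $\mA_{k-2}$, the Jones projection $e_k$ commutes with $\mA_{k-2}$ (from the defining property of the Jones projection), and $y$ commutes with $\mA_{k-2}$, this $x$ lies in $\mA_{k-2}'\cap\mA_k$. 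The dual expectation formula $E_k(e_ky)=\mathrm{Ind}_W(E_{k-1})^{-1}y$ (specialising \Cref{A1=span}(3) at the $k$-th level with left factor $1$) then gives $E_k(x)=y$.

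Part (2) I would prove by induction on $k$, treating the base case $k=1$ and the inductive step by a single uniform argument. Assume $\mA_{k-2}'\cap\mA_{k-1}=\C$; for $k=1$ this is the hypothesised irreducibility of $\mB\subset\mA$, and for $k\geq 2$ it is the inductive hypothesis. Given $x\in\mA_{k-1}'\cap\mA_k$, the pushdown lemma (\Cref{A1=span}(6) applied at level $k$) supplies a unique $x_0\in\mA_{k-1}$ with $xe_k=x_0e_k$. For every $c\in\mA_{k-2}$ I would then compute
\[
cx_0e_k = cxe_k = xce_k = xe_kc = x_0e_kc = x_0c\,e_k,
\]
where I use, successively, the pushdown identity, $x\in\mA_{k-1}'\subseteq\mA_{k-2}'$, the commutation of $e_k$ with $c$, the pushdown identity again, and the commutation of $e_k$ with $c$ once more. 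Right multiplication by $e_k$ is injective on $\mA_{k-1}$: if $ye_k=0$ then $E_{k-1}(y^*y)e_k=e_ky^*ye_k=0$, and applying $E_k$, together with $E_k(e_k)=\mathrm{Ind}_W(E_{k-1})^{-1}$ being invertible and central, forces $E_{k-1}(y^*y)=0$, hence $y=0$ by faithfulness of $E_{k-1}$. Consequently $cx_0=x_0c$ for every $c\in\mA_{k-2}$, so $x_0\in\mA_{k-2}'\cap\mA_{k-1}=\C$; write $x_0=\alpha\cdot 1$.

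To close the argument, I would fix a quasi-basis $\{\lambda_i\}$ for $E_{k-1}$ and use the partition-of-unity identity $\sum_i\lambda_ie_k\lambda_i^*=1_{\mA_k}$ from \Cref{A1=span}(1); since $x$ commutes with each $\lambda_i\in\mA_{k-1}$,
\[
x = x\sum_i\lambda_ie_k\lambda_i^* = \sum_i\lambda_i(xe_k)\lambda_i^* = \alpha\sum_i\lambda_ie_k\lambda_i^* = \alpha\cdot 1_{\mA_k},
\]
so $x\in\C$ and the induction closes. I expect the main (though modest) subtlety to be the injectivity of right multiplication by $e_k$; the rest is careful bookkeeping that simultaneously exploits the three central properties of $e_k$, namely its commutation with $\mA_{k-2}$, the pushdown formula, and the quasi-basis partition of unity.
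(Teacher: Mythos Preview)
Your proof is correct and follows essentially the same route as the paper's: pushdown to produce $x_0$, show $x_0$ commutes with $\mA_{k-2}$, and conclude via the quasi-basis partition of unity. The only minor variation is that you establish $x_0\in\mA_{k-2}'$ via injectivity of right multiplication by $e_k$, whereas the paper uses the explicit pushdown formula $x_0=\mathrm{Ind}_W(E_{k-1})\,E_k(xe_k)$ together with $\mA_{k-1}$-bimodularity of $E_k$ to compute $cx_0=x_0c$ directly, avoiding the injectivity detour.
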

\begin{proof} It is enough to prove for $k =1$. 

  (1):  Clearly, for  each $x_1\in \mB'\cap \mA_1$, 
  \(
  \widetilde{E}(x_1) b = \widetilde{E}(x_1b) =
  \widetilde{E}(bx_1) = b \widetilde{E}(x_1)
  \) for all $b \in \mB$.   Hence,
  $ \widetilde{E}(\mB'\cap \mA_1) \subseteq \mB'\cap \mA$.

  For the reverse inclusion, note that,  for each $a \in \mB'\cap \mA$,
  $x_1:=\Ind_W(E) ae_1 \in \mB'\cap \mA_1$ and
  $\widetilde{E}(x_1) = a$, by \Cref{Dual-exp}. Hence, $\mB'\cap \mA
  \subseteq \widetilde{E}(\mB'\cap \mA_1)$.\smallskip

  (2): Now, suppose that $\mB \subset \mA$ is irreducible and $x_1 \in
  \mA'\cap \mA_1$. Let $\{\lambda_i: 1 \leq i \leq n\} \subset \mA$ be
  a quasi basis for $E$. Then, $x_0 := \Ind_W(E) \widetilde{E}(x_1
  e_1)\in \mA$ and $x_1e_1= x_{0}e_1$, by \Cref{Dual-exp}(6). Further, for every $b\in B$, we have
\[
x_{0}b = \Ind_W(E)\widetilde{E}(x_1e_1)b= \Ind_W(E)\widetilde{E}(x_1e_1b)=
\Ind_W(E) \widetilde{E}(bx_1e_1)= b \Ind_W(E)\widetilde{E}(x_1e_1)= bx_0.
\]
Hence, $x_0\in \mB^{'}\cap \mA =\C$; so that, $x_0= \beta\1$ for some $\beta\in \mbb{C}$, which then shows that 
\[
x_1  =  x_1\sum_{i=1}^n \lambda_{i}e_1\lambda_{i}^* 
 =  \sum_{i=1}^n \lambda_{i}x_1e_1\lambda_{i}^*
 =   \beta\sum_{i=1}^n \lambda_{i} e_1\lambda_{i}^*
 =  \beta \1,
 \]
where the first equality holds because of \Cref{A1=span}(1).
 This implies that $\mA^{'}\cap \mA_1 =  \mbb{C}\1$, and we are done.
 \end{proof}

See \cite{Wat, IW} for more on Watatani index,
$C^*$-basic construction and compatible intermediate $C^*$-subalgebras.

\section{Kadison-Kastler distance}\label{Kadison-Kastler}
For any normed space $X$, as is standard, its closed unit ball will be
denoted by $B_1(X)$ and for any subset $K$ of $X$ and an element $x
\in X$, the distance between $x$ and $K$ is defined as
\[
d(x, K) = \inf\{ \|x - y\|: y \in K\}.
\]

\begin{definition}  \cite{KK}
  The Kadison-Kastler distance between any two subalgebras $\mC$ and $\mD$
  of a normed algebra $\mA$ (which we denote by $d_{KK}(\mC,\mD)$) is
  defined as the Hausdorff distance between the closed unit balls of
  $\mC$ and $\mD$, i.e.,
\[
d_{KK}(\mC,\mD) = \max\left\{\sup_{x \in B_1(\mC)} d(x, B_1(\mD)), \sup_{z\in B_1(\mD)}d(z, B_1(\mC))\right\}.
\]
\end{definition}

The Kadison-Kastler distance makes sense even for two subspaces of a
normed space but we shall work mainly with the distance between
subalgebras of a normed algebra.

We must remark that the notation $d_{KK}$ is not standard. We have
used it for the Kadison-Kastler distance in order to keep a
distinction between it and two other notions of distance introduced by
Christensen and a distance introduced by Mashood and
Taylor, which shall be discussed in \Cref{comparisons}.

\noindent {\bf Notations:}  For a normed algebra $\mA$,  let
\[
\mathrm{Sub}_{\mA} :=\big\{ \text{subalgebras of } \mA \big\}; 
\]
\[
C\text{-}\mathrm{Sub}_{\mA}:=\big\{ \text{closed subalgebras of } \mA\big\};
\]
and, if $\mA$ is a $C^*$-algebra, then let
\[
C^*\text{-}\mathrm{Sub}_{\mA} :=\big\{ C^*\text{-subalgebras of } \mA\big\}.
\]

Here are some well known elementary observations related to the
Kadison-Kastler distance.
\begin{remark}\label{basic-facts}
  Let $\mA$ be a normed algebra. 
\begin{enumerate}
\item $d_{KK}(\mC,\mD) \leq 1$ for all $\mC, \mD \in \mathrm{Sub}_{\mA}$.
\item If $\mA$ is a Banach algebra, then $d_{KK}$ is a metric on
  $C\text{-}\mathrm{Sub}_{\mA}$.

\item If $\mC, \mD \in C\text{-}\mathrm{Sub}_{\mA}$ and $\mC$ is a proper
  subalgebra of $\mD$, then $d_{KK}(\mC,\mD) =1$. (\cite[Lemma 2.1]{IW})
\end{enumerate}
\end{remark}

The following elementary observation is obvious and will be used to
calculate the distance between certain $C^*$-subalgebras in \Cref{calculations}.

\begin{lemma}\label{dense-distance}
Let $\mA$ be a normed algebra. Then,
\[
d_{KK}(\mC, \mD) = d_{KK}\big(\mC, \overline{\mD}\big) = d_{KK}\big(\overline{\mC}, \mD\big) =
d_{KK}\big(\overline{\mC},\overline{ \mD}\big)
\]
for all $\mC, \mD \in \mathrm{Sub}_{\mA}$.
\end{lemma}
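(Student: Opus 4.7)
The plan rests on two elementary observations about the normed algebra $\mA$. First, for any subalgebra $\mC$ of $\mA$, the closed unit ball $B_1(\mC)$ is norm-dense in $B_1(\overline{\mC})$: given $y \in \overline{\mC}$ with $\|y\| \leq 1$ and a sequence $x_n \in \mC$ with $x_n \to y$, the rescaled sequence $x_n / \max(1, \|x_n\|)$ lies in $B_1(\mC)$ and still converges to $y$, because $\|x_n\| \to \|y\| \leq 1$. Second, for any non-empty subset $K$ of $\mA$, the function $x \mapsto d(x, K)$ is $1$-Lipschitz and satisfies $d(x, K) = d(x, \overline{K})$.

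With these two facts in hand, the proof is essentially immediate. By symmetry (and by iterating in the obvious way to get the joint closure) it suffices to verify $d_{KK}(\mC, \mD) = d_{KK}(\mC, \overline{\mD})$; the identity $d_{KK}(\mC, \mD) = d_{KK}(\overline{\mC}, \mD)$ follows from the same argument with the roles of $\mC$ and $\mD$ swapped, and then chaining these yields all four equalities. One then compares the two terms in the definition of $d_{KK}$ separately. For the first term, since $\overline{B_1(\mD)} = B_1(\overline{\mD})$, we have $d(x, B_1(\mD)) = d(x, B_1(\overline{\mD}))$ for each $x \in B_1(\mC)$, and so the suprema $\sup_{x \in B_1(\mC)} d(x, B_1(\mD))$ and $\sup_{x \in B_1(\mC)} d(x, B_1(\overline{\mD}))$ coincide. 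For the second term, the function $z \mapsto d(z, B_1(\mC))$ is $1$-Lipschitz, so its supremum over the dense subset $B_1(\mD)$ of $B_1(\overline{\mD})$ equals its supremum over all of $B_1(\overline{\mD})$.

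I do not foresee any real obstacle; the only minor point to check carefully is the density statement for the unit balls, which requires the simple rescaling trick described above. Once that is established, the whole proof is one line in each direction.
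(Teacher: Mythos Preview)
Your argument is correct; the rescaling trick establishes the density $\overline{B_1(\mC)} = B_1(\overline{\mC})$, and the rest follows from the Lipschitz property and the invariance $d(x,K)=d(x,\overline{K})$ exactly as you outline. The paper itself gives no proof, simply declaring the lemma ``obvious,'' so your proposal supplies precisely the routine verification the authors omit.
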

It is natural to ask whether for a given subalgebra $\mB$ of a
$C^*$-algebra $\mA$, can one always find a subalgebra as close as one
desires. In \cite[Example, 2.2.2]{IN}, it was shown that $d_{KK}(\mB,
u \mB u^*) \leq \|u - \1\|$ for all $u \in \mU(\mA)$. Thus, because of
the following lemma, one can get a conjugate subalgebra as close as
one wishes.  (Its proof follows from an elementary continuous
functional calculus argument and we leave it to the reader.)

\begin{lemma}\label{close-unitary}
Let $\mathcal{A}$ be a unital $C^{*}$-algebra. Then, for each
$\epsilon > 0$, there exists a unitary $ u$ in  $\mathcal{A}$ such
that $0< \|u- \1\| < \epsilon$. 
\end{lemma}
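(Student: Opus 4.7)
The plan is to exhibit an explicit unitary of scalar type using the canonical copy of $\mathbb{C}\1$ inside $\mA$. For any real number $t \in (0, 2\pi)$, the complex scalar $e^{it}$ has modulus $1$, so the element $u := e^{it}\1 \in \mA$ satisfies $u^*u = uu^* = |e^{it}|^2 \1 = \1$, i.e., it is a unitary in $\mA$. Moreover,
\[
\|u - \1\| = \|(e^{it}-1)\1\| = |e^{it} - 1| = 2\sin(t/2),
\]
which is strictly positive whenever $t \in (0, 2\pi)$ and tends to $0$ as $t \to 0^+$. Hence, given $\epsilon > 0$, choosing (for example) $t := \min(\epsilon, \pi)/2$ produces a unitary $u \in \mA$ with $0 < \|u - \1\| < \epsilon$.

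If one prefers a more intrinsic formulation in the spirit of the hint in the excerpt, the same $u$ arises from continuous functional calculus: the self-adjoint element $h := t\1$ has spectrum $\{t\}$, so applying the function $z \mapsto e^{iz}$ gives the unitary $u = e^{ih} = e^{it}\1$, and the norm computation is immediate from the spectral identity $\|u - \1\| = \sup_{\lambda \in \sigma(u)} |\lambda - 1|$ for normal elements. Either way, there is no real obstacle here; the only point requiring any care is ensuring $u \neq \1$, which is handled by restricting $t$ to the open interval $(0, 2\pi)$.
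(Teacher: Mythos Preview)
Your proof is correct and is precisely the elementary continuous functional calculus argument the paper alludes to: taking $u = e^{it}\1$ (equivalently $e^{ih}$ for $h = t\1$) gives a scalar unitary with $\|u-\1\| = 2|\sin(t/2)|$, which can be made arbitrarily small and nonzero. There is nothing to add.
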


Further, for any inclusion $\mB \subset \mA$ of unital
$C^*$-algebras, it is also quite natural to ask whether for a unitary
$u \in \mathcal{U}(A)$, there exists any relationship between its
distance from $ \mN_{\mA}(\mB)$ and the Kadison-Kastler distance
between $\mB$ and its conjugate $u \mB u^*$.

Interestingly, motivated by an inequality given by Popa-Sinclair-Smith
(\cite[Lemma 6.3]{Po}), we obtain the following pleasing relationship
without much effort, which then has a nice consequence that if $E \in
\mathcal{E}_0(\mA, \mB)$, then for any $\mC \in \mcal{F}(\mB, \mA, E)$
(see (\ref{FBAE-expression})), a unitary which normalizes $\mB$ and is
sufficiently close to $\mN_{\mA}(\mC)$ must belong to $\mN_{\mA}(\mC)$
- see \Cref{u-close-to-normalizer}.

\begin{lemma}\label{Normaliser-distance}\label{u-1-relation}
Let $\mA$ be a unital $C^*$-algebra and $\mB$ be a unital
$C^*$-subalgebra of $\mA$. Then, 
$$
d_{KK}\big(\mB, u\mB u^{*}\big)\leq 2d\big(u, \mN_{\mA}(\mB)\big) \leq 2 \|u - \1\|
$$
for all $u\in \mcal{U}(\mA)$.
\end{lemma}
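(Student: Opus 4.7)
The second inequality is immediate: since $\mB$ is unital, $\1 \in \mN_{\mA}(\mB)$, hence $d(u, \mN_{\mA}(\mB)) \leq \|u - \1\|$. So the real content is the first inequality.

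The plan for the first inequality is the following triangle-inequality strategy. Fix any $v \in \mN_{\mA}(\mB)$; then $v\mB v^* = \mB$, so $d_{KK}(\mB, u\mB u^*) = d_{KK}(v\mB v^*, u\mB u^*)$. I would therefore reduce the statement to the auxiliary estimate
\[
d_{KK}\big(u_1 \mB u_1^*, u_2 \mB u_2^*\big) \leq 2\|u_1 - u_2\|
\]
valid for all $u_1, u_2 \in \mcal{U}(\mA)$. Once this is established, taking $u_1 = v$, $u_2 = u$ and infimizing over $v \in \mN_{\mA}(\mB)$ yields $d_{KK}(\mB, u\mB u^*) \leq 2 d(u, \mN_{\mA}(\mB))$, as desired.

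To prove the auxiliary estimate, I would pick an arbitrary element of $B_1(u_1\mB u_1^*)$, write it as $u_1 b u_1^*$ with $b \in B_1(\mB)$, and compare it to $u_2 b u_2^* \in B_1(u_2\mB u_2^*)$ via the identity
\[
u_1 b u_1^* - u_2 b u_2^* \;=\; u_1 b (u_1^* - u_2^*) + (u_1 - u_2) b u_2^*.
\]
Since $u_1, u_2$ are unitary, $\|b\| \leq 1$, and $\|u_1^* - u_2^*\| = \|u_1 - u_2\|$, this immediately gives $\|u_1 b u_1^* - u_2 b u_2^*\| \leq 2\|u_1 - u_2\|$. So $\sup_{x \in B_1(u_1\mB u_1^*)} d\big(x, B_1(u_2\mB u_2^*)\big) \leq 2\|u_1 - u_2\|$, and the symmetric estimate is obtained by exchanging the roles of $u_1, u_2$.

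There is no serious obstacle in this argument; it is essentially a two-line triangle-inequality computation combined with the observation that elements of $\mN_{\mA}(\mB)$ fix $\mB$ under conjugation. The only thing to be careful about is to perform the splitting of $u_1 b u_1^* - u_2 b u_2^*$ so as to keep a unitary factor attached to $b$ on each side, which ensures one picks up the constant $2$ rather than anything larger.
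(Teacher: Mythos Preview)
Your proof is correct and follows essentially the same strategy as the paper. The only cosmetic difference is that the paper sets $w := uv^*$ and compares $\mB$ with $w\mB w^*$ via $\|x - wxw^*\| = \|xw - wx\| \leq \|xw - x\| + \|x - wx\| \leq 2\|w - \1\| = 2\|u - v\|$, whereas you keep both unitaries and split $u_1 b u_1^* - u_2 b u_2^*$ directly; the two computations are equivalent rearrangements of the same triangle-inequality estimate.
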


\begin{proof}
  Let $u \in \mU(\mA)$. Clearly, $d\big(u, \mN_{\mA}(\mB)\big) \leq \|u - \1\|$.

  Next, let $v\in \mN_{\mA}(\mB)$ and $w:=
uv^{*}$. Then, $w \mB w^{*}= (uv^{*})\mB(uv^*)^*= u(v^{*}\mB v)u^{*} =
u\mB u^{*}$. So, for any $x\in B_1(\mB)$,
\[
\|x-wxw^*\|  =  \|xw- wx\| \leq  \|xw-x\| +\|x-wx\| 
\leq  2\|w- \1\|.
\]
This implies that $d\big(x, B_1(w \mB w^{*})\big)\leq 2\|w- \1\|.$ Hence,
$$
\sup_{x \in B_{1}(\mB)}d\Big(x, B_1(w\mB w^{*})\Big)\leq
2\|w- \1\|.
$$
Likewise,
\[
\sup_{y \in B_1(w\mB w^{*})}d\big(y, B_1(\mB)\big)\leq
2\|w- \1\|.
\]
So,
$$
d_{KK}\big(\mB,w\mB w^{*}\big) \leq 2 \|w-\1\|.
$$ Thus, $d_{KK}\big(\mB,w\mB w^{*}\big) \leq 2 \|u-v\|$ for all $v\in
N_{\mA}(\mB)$. Hence,\\ \hspace*{25mm} $ d_{KK}\big(\mB,u\mB u^{*}\big)= d_{KK}\big(\mB,w\mB
w^{*}\big) \leq 2 d(u, N_{\mA}(\mB)).  $
\end{proof}

In the reverse direction, Kadison and Kastler (in \cite{KK}) had
conjectured that sufficiently close subalgebras must be conjugates of
each other.  This was answered in the affirmative for various cases in
a series of some fundamental papers by Christensen, Phillips, Raeburn
and others in the decade of 70s.  Since then there have been
several other such so-called ``perturbation results''. People have
also employed such perturbation results to answer other important
questions. One such result with a nice application is due to Ino and
Watatani (from \cite{IW}). In fact, some of the results in this article are
direct applications of the perturbation result of Ino and Watatani.

Consider the following question, which arises naturally from
\Cref{u-1-relation}:\smallskip

\noindent {\em Question: Given a unital $C^*$-algebra $\mA$ and a
  $C^*$-subalgebra $\mB$, is every unitary sufficiently close to $\1$
  (or to $\mN_{\mA}(\mB)$) in the normalizer of the subalgebra
  $\mB$?}\smallskip

It is easily seen - see, for instance, \Cref{u-theta-S3} - that it has
a negative answer.  However, interestingly, \Cref{u-1-relation} along
with a perturbation result of Ino and Watatani (\cite[Proposition
  3.6]{IW}) immediately yields a somewhat positive answer for
compatible intermediate $C^*$-subalgebras - see
\Cref{u-close-to-normalizer} and \Cref{u-in-normalizer} below.

\noindent {\bf Notation:} Given an inclusion $\mB \subset \mA$ of
unital $C^*$-algebras with a finite-index conditional expectation $E:
\mA \to \mB$, let
\begin{equation}\label{FBAE-expression}
    \mathcal{F}(\mB,\mA, E) :=\big\{ \mC \in \IMS(\mB, \mA, E):
\mC_{\mA}(\mB) \subseteq \mC_{\mA}(\mC) \cup \mC\big\}.
\end{equation}
Note that $\mcal{F}(\mB, \mA, E) = \IMS(\mB, \mA, E)$ if
$\mC_{\mA}(\mB) \subseteq \mB$.

\begin{proposition}\label{u-close-to-normalizer}
Let $\mB \subset \mA$ be an inclusion of unital $C^*$-algebras with a
finite-index conditional expectation $E: \mA \to \mB$. Then, there
exists a constant $\alpha > 0$  such that
\(
\big\{ u \in {\mN}_{\mA}(\mB): d(u, \mN_{\mA}(\mC)) < \alpha \big\} \subseteq
 \mN_{\mA}(\mC)\) for every $\mC \in \mcal{F}(\mB, \mA, E)$.

In particular, if $\mC_{\mA}(\mB) \subseteq \mB$, then
\(
\big\{ u \in {\mN}_{\mA}(\mB): d(u, \mN_{\mA}(\mC)) < \alpha \big\} \subseteq
 \mN_{\mA}(\mC)\) for every $\mC \in \IMS(\mB, \mA, E)$.
\end{proposition}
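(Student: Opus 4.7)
The plan is to combine \Cref{u-1-relation} (which bounds the Kadison--Kastler distance between a subalgebra and its unitary conjugate by twice the distance from the unitary to the normalizer) with the perturbation result of Ino and Watatani \cite[Proposition 3.6]{IW} (which forces sufficiently close members of $\mcal{F}(\mB,\mA,E)$ to coincide). From the latter I would extract a uniform constant $\alpha_{0}>0$, depending only on the inclusion data $(\mB\subset\mA,E)$, and set $\alpha:=\alpha_{0}/2$.

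Given $\mC\in\mcal{F}(\mB,\mA,E)$ and any $u\in\mN_{\mA}(\mB)$ with $d(u,\mN_{\mA}(\mC))<\alpha$, applying \Cref{u-1-relation} with $\mC$ in the role of $\mB$ at once yields
\[
d_{KK}\bigl(\mC,\,u\mC u^{*}\bigr)\;\leq\;2\,d\bigl(u,\mN_{\mA}(\mC)\bigr)\;<\;\alpha_{0}.
\]
To invoke the Ino--Watatani perturbation and conclude $u\mC u^{*}=\mC$, it suffices to check that $u\mC u^{*}$ also lies in $\mcal{F}(\mB,\mA,E)$. The equality $u\mB u^{*}=\mB$ places $u\mC u^{*}$ in $\mcal{I}(\mB\subset\mA)$, and the centralizer condition is transported by the automorphism $\mathrm{Ad}_{u}$: since $\mathrm{Ad}_{u}$ fixes $\mC_{\mA}(\mB)$ setwise (because $u\in\mN_{\mA}(\mB)$) and sends $\mC_{\mA}(\mC)$ to $\mC_{\mA}(u\mC u^{*})$, the relation $\mC_{\mA}(\mB)\subseteq\mC_{\mA}(\mC)\cup\mC$ passes to $\mC_{\mA}(\mB)\subseteq\mC_{\mA}(u\mC u^{*})\cup u\mC u^{*}$. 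For the $E$-compatibility I would take $F^{u}:=\mathrm{Ad}_{u}\circ F\circ\mathrm{Ad}_{u^{*}}$, where $F:\mA\to\mC$ is the unique $E$-compatible CE. The ``in particular'' statement is then immediate from the observation (recorded just after \eqref{FBAE-expression}) that $\mcal{F}(\mB,\mA,E)=\IMS(\mB,\mA,E)$ whenever $\mC_{\mA}(\mB)\subseteq\mB$.

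The crux is verifying that $F^{u}$ is genuinely $E$-compatible, i.e., the identity $E\bigl(u\,F(u^{*}xu)\,u^{*}\bigr)=E(x)$ for all $x\in\mA$. In general, conjugation by $u\in\mN_{\mA}(\mB)$ twists $E$ into the (possibly distinct) finite-index CE $E^{u}:=\mathrm{Ad}_{u}\circ E\circ\mathrm{Ad}_{u^{*}}:\mA\to\mB$, and $F^{u}$ is manifestly compatible with $E^{u}$ rather than with $E$. Either arguing $E^{u}=E$ (which is automatic whenever $E$ is the unique finite-index CE from $\mA$ onto $\mB$, e.g., in the factorial case) or invoking a mildly strengthened form of \cite[Proposition 3.6]{IW} that tolerates small changes in the base CE on $\mB$ is therefore the decisive technical step; once this is settled, \cite[Proposition 3.6]{IW} gives $u\mC u^{*}=\mC$, i.e., $u\in\mN_{\mA}(\mC)$.
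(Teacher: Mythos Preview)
Your outline is close, but the ``crux'' you isolate is a detour that the paper's argument avoids entirely. The issue stems from a slight misreading of \cite[Proposition~3.6]{IW}: that result does not assert that sufficiently close members of $\mcal{F}(\mB,\mA,E)$ \emph{coincide}; rather, for two intermediate $C^*$-subalgebras $\mC,\mD$ of $\mB\subset\mA$ (each admitting a faithful conditional expectation from $\mA$) with $d_{KK}(\mC,\mD)$ below the threshold, it produces a unitary $v\in\mcal{U}(\mB'\cap\mA)$ such that $v\mC v^{*}=\mD$. No $E$-compatibility of $\mD$ is required in its hypotheses.

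With this in hand the paper proceeds as follows. Take $\alpha=\tfrac{1}{2}(10N)^{-4}$, where $N$ is the size of a quasi-basis for $E$. For $\mC\in\mcal{F}(\mB,\mA,E)$ and $u\in\mN_{\mA}(\mB)$ with $d(u,\mN_{\mA}(\mC))<\alpha$, \Cref{u-1-relation} gives $d_{KK}(\mC,u\mC u^{*})<(10N)^{-4}$, and $u\mC u^{*}$ is an intermediate subalgebra (since $u\mB u^{*}=\mB$) carrying the faithful conditional expectation $F_u=\mathrm{Ad}_u\circ F\circ\mathrm{Ad}_{u^{*}}$. Now \cite[Proposition~3.6]{IW} yields $v\in\mcal{U}(\mB'\cap\mA)$ with $v\mC v^{*}=u\mC u^{*}$. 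Here only $\mC$ needs to lie in $\mcal{F}(\mB,\mA,E)$: from $\mB'\cap\mA\subseteq(\mC'\cap\mA)\cup\mC$ one gets $v\mC v^{*}=\mC$ in either case, hence $u\mC u^{*}=\mC$ and $u\in\mN_{\mA}(\mC)$.

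So you never need to put $u\mC u^{*}$ into $\mcal{F}(\mB,\mA,E)$, and in particular you never need $F_u$ to be $E$-compatible; the obstruction you flagged (that $\mathrm{Ad}_u$ may twist $E$ into a different CE $E^u$) simply does not arise. Your transport-by-$\mathrm{Ad}_u$ of the centralizer condition is correct but unnecessary, and the conditional resolution you propose (uniqueness of $E$, or a strengthened perturbation theorem) would impose extra hypotheses the statement does not carry.
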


\begin{proof}
Let $N$ be the number of elements (which is not unique) in a
quasi-basis for $E$ and let $\alpha = \frac{0.5}{(10 N)^4}$. First,
note that for any $\mC \in \IMS(B, A, E)$, with respect to the compatible
finite-index conditional expectation $F: \mA \to \mC$, and any  $u \in
\mU(\mA)$, there exists a faithful conditional expectation $F_u: \mA
\to u\mC u^*$ given by $F_u = \mathrm{Ad}_u \circ F \circ
\mathrm{Ad}_{u^{*}}$.

Now, let $\mC \in \mcal{F}(B, A, E)$ and $u \in \mN_{\mA}(\mB)$ with
$d( u, \mN_{\mA}(\mB)) < \alpha$. Then, $\mB \subseteq u \mC u^* \subseteq \mA $
and $d_{KK}(\mC, u \mC u^*) < \frac{1}{(10 N )^4}$, by
\Cref{u-1-relation}.  Thus, by \cite[Proposition 3.6]{IW}, there
exists a $v \in \mU(\mB'\cap \mA)$ such that $v \mC v^* = u \mC
u^*$. Since $\mB'\cap \mA \subseteq (\mC'\cap \mA) \cup \mC$, it
follows that $v \mC v^* = \mC$. Hence, $u \in \mN_{\mA}(\mC)$, and we
are done.
\end{proof}
\begin{cor}\label{u-in-normalizer}
Let $\mA, \mB$, $E$ and $\alpha$ be as in \Cref{u-close-to-normalizer}. Then,
\[
\big\{ u \in {\mN}_{\mA}(\mB): \|u - \1\| < \alpha\big\} \subseteq
\bigcap \Big\{ \mN_{\mA}(\mC): \mC \in \mcal{F}(\mB, \mA, E)\Big\}.
\]
In particular, if $\mC_{\mA}(\mB) \subseteq \mB$, then
\[
\big\{ u \in {\mN}_{\mA}(\mB): \|u - \1\| < \alpha\big\} \subseteq
\bigcap \Big\{ \mN_{\mA}(\mC): \mC \in \IMS(\mB, \mA, E)\Big\}.
\]
  \end{cor}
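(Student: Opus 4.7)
The plan is to deduce this corollary directly from \Cref{u-close-to-normalizer} by invoking the trivial observation that $\1 \in \mN_{\mA}(\mC)$ for every unital $C^*$-subalgebra $\mC \subseteq \mA$ with common unit. Concretely, I would take the same constant $\alpha > 0$ furnished by \Cref{u-close-to-normalizer} and show that it works for the corollary as well.

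Fix an arbitrary $\mC \in \mcal{F}(\mB, \mA, E)$. Since $\mC$ is a unital $C^*$-subalgebra containing the common unit $\1$, we have $\1 \mC \1^* = \mC$, so $\1 \in \mN_{\mA}(\mC)$. Consequently, for every $u \in \mU(\mA)$,
$$d\big(u, \mN_{\mA}(\mC)\big) \leq \|u - \1\|.$$
In particular, if $u \in \mN_{\mA}(\mB)$ satisfies $\|u - \1\| < \alpha$, then $d(u, \mN_{\mA}(\mC)) < \alpha$, and \Cref{u-close-to-normalizer} gives $u \in \mN_{\mA}(\mC)$. Since $\mC \in \mcal{F}(\mB, \mA, E)$ was arbitrary, $u$ lies in $\bigcap \{\mN_{\mA}(\mC): \mC \in \mcal{F}(\mB, \mA, E)\}$, which proves the first inclusion. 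The ``In particular'' statement then follows from the same argument together with the observation made just after (\ref{FBAE-expression}) that $\mcal{F}(\mB, \mA, E) = \IMS(\mB, \mA, E)$ whenever $\mC_{\mA}(\mB) \subseteq \mB$.

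There is essentially no obstacle here: the entire content of the corollary is already buried in \Cref{u-close-to-normalizer}, and the only new input is the elementary remark that $\1$ lies in $\mN_{\mA}(\mC)$, which lets one replace the hypothesis $d(u, \mN_{\mA}(\mC)) < \alpha$ by the strictly stronger and more usable hypothesis $\|u - \1\| < \alpha$. No new estimates, perturbation results, or structural arguments are required.
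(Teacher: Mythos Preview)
Your proposal is correct and follows exactly the approach the paper intends: the corollary is stated without proof precisely because it is immediate from \Cref{u-close-to-normalizer} together with the trivial inequality $d(u,\mN_{\mA}(\mC)) \leq \|u-\1\|$ (already recorded in \Cref{u-1-relation}), and your write-up makes this explicit.
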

As an application of a perturbation result by Dickson (from \cite{Dickson})
and a fundamental result regarding finiteness of the index of a
conditional expectation by Izumi (from \cite{I}), we have a slightly more
general variant of \Cref{u-close-to-normalizer} for inclusions of
simple unital $C^*$-algebras. 

\noindent{\bf Notation:} For any inclusion $\mB \subset \mA$ of
unital $C^*$-algebras (with common unit), let
\begin{equation}\label{I0BAE-defn}
 \mcal{I}_0(\mB \subset \mA):=\big\{ \mC \in \mcal{I}(\mB \subset \mA):
\mC_{\mA}(\mB) \subseteq \mC_{\mA}(\mC) \cup \mC\big\}.
\end{equation}
Clearly, $ \mcal{F}(\mB, \mA, E) \subseteq \mcal{I}_0(\mB \subset
\mA)$ for every $E \in \mE_0(\mA, \mB)$; and, $
\mcal{I}_0(\mB \subset \mA) = \mcal{I}(\mB \subset \mA)$ if
$\mC_{\mA}(\mB) \subseteq \mB$.

\begin{proposition}\label{u-close-to-normalizer-simple}
Let $\mB \subset \mA$ be an inclusion of simple unital $C^*$-algebras
with a finite-index conditional expectation $E: \mA \to \mB$.  Then,
for any $0< \gamma < \frac{1}{10^{6}}$,
\(
\big\{ u \in {\mN}_{\mA}(\mB): d(u, \mN_{\mA}(\mC)) < \frac{\gamma}{2}\big\}
\subseteq  \mN_{\mA}(\mC)\) for every $\mC \in \mcal{I}_0(\mB \subset
\mA)$. 

In particular, if $\mC_{\mA}(\mB) \subseteq \mB$, then 
\(
\big\{ u \in {\mN}_{\mA}(\mB): d(u, \mN_{\mA}(\mC)) < \frac{\gamma}{2}\big\}
\subseteq  \mN_{\mA}(\mC)\) for every $\mC \in \mcal{I}(\mB \subset
\mA)$.
\end{proposition}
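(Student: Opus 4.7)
The approach is a step-by-step adaptation of the proof of \Cref{u-close-to-normalizer}, with the Ino--Watatani perturbation result replaced by Dickson's perturbation result for simple $C^*$-subalgebras (which yields the universal constant $1/10^6$ in place of the quasi-basis-dependent $\alpha$ used earlier). I would fix $\mC \in \mcal{I}_0(\mB \subset \mA)$ and $u \in \mN_{\mA}(\mB)$ with $d(u, \mN_{\mA}(\mC)) < \gamma/2$, pick $v \in \mN_{\mA}(\mC)$ with $\|u-v\| < \gamma/2$, and set $w := uv^*$. Then $\|w - \1\| < \gamma/2$; since $v\mC v^* = \mC$ and $u\mB u^* = \mB$, one has $w\mC w^* = u\mC u^*$ together with $\mB \subseteq u\mC u^* \subseteq \mA$, so that $u\mC u^*$ is a genuine intermediate $C^*$-subalgebra. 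Applying \Cref{u-1-relation} to $\mC$ and $w$ then gives
\[
d_{KK}(\mC, u\mC u^*) \;=\; d_{KK}(\mC, w\mC w^*) \;\leq\; 2\|w - \1\| \;<\; \gamma \;<\; \tfrac{1}{10^6}.
\]

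Next, I would invoke Izumi's theorem (\cite{I}): since $\mB \subset \mA$ is an inclusion of simple unital $C^*$-algebras with a finite-index conditional expectation, every intermediate $C^*$-subalgebra---in particular $\mC$ and $u\mC u^*$---is itself simple and carries a finite-index conditional expectation from $\mA$. This puts the situation squarely in the hypothesis of Dickson's perturbation result (\cite{Dickson}), which, applied to the pair $\mC$, $u\mC u^*$ of simple unital $C^*$-subalgebras of $\mA$ lying at Kadison--Kastler distance strictly less than $1/10^6$, produces a unitary $z \in \mU(\mB'\cap \mA)$ with $z\mC z^* = u\mC u^*$.

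The hypothesis $\mC \in \mcal{I}_0(\mB \subset \mA)$ reads $\mB'\cap \mA = \mC_{\mA}(\mB) \subseteq \mC_{\mA}(\mC) \cup \mC$, so $z$ either commutes with $\mC$ or lies in $\mC$; either way, $z\mC z^* = \mC$. Chaining with the previous step gives $u\mC u^* = \mC$, i.e.\ $u \in \mN_{\mA}(\mC)$, as required. The ``In particular'' clause is then immediate: if $\mC_{\mA}(\mB) \subseteq \mB$, the defining condition of $\mcal{I}_0$ holds automatically for every intermediate subalgebra, so $\mcal{I}_0(\mB \subset \mA) = \mcal{I}(\mB \subset \mA)$. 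I expect the main obstacle to be pinning down Dickson's perturbation theorem in a form which places the conjugating unitary inside $\mB'\cap \mA$ rather than merely in $\mU(\mA)$; this is precisely what permits the $\mcal{I}_0$ hypothesis to close the argument, while Izumi's theorem is indispensable for ensuring that Dickson's simplicity assumption is met by every intermediate subalgebra. Everything else is parallel bookkeeping to the proof of \Cref{u-close-to-normalizer}.
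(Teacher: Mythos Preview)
Your proposal is correct and follows essentially the same route as the paper: bound $d_{KK}(\mC, u\mC u^*)$ via \Cref{u-1-relation}, invoke Izumi to place the intermediate subalgebras in the scope of Dickson's theorem, extract a conjugating unitary in $\mB'\cap\mA$, and then use the $\mcal{I}_0$ hypothesis to conclude. The one point of divergence is how Dickson's hypothesis is checked: the paper does not appeal to simplicity of the intermediate subalgebras but instead verifies that the restriction $E_{\restriction_{u\mC u^*}}: u\mC u^* \to \mB$ has a finite quasi-basis (it inherits the Pimsner--Popa inequality from $E$, and then \cite[Corollary~3.4]{I} applies since $\mB$ is simple), which is the form of the finite-index hypothesis that \cite[Theorem~3.7]{Dickson} actually uses; your route through \cite[Proposition~6.1]{I} supplies finite-index expectations from $\mA$ onto the intermediate subalgebras instead, so when you ``pin down'' Dickson's statement you should align it with the former rather than with simplicity.
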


\begin{proof}
First, note that for any $\mC \in \mcal{I}(\mB \subset \mA)$, by
\cite[Proposition 6.1]{I}, there exists a conditional expectation $F:
\mA\rightarrow \mC$ of finite index. Hence, it is faithful.

Now, let $\mC \in \mcal{I}_{0}(\mB \subset \mA)$, $ 0 < \gamma <
\frac{1}{10^6}$ and $u \in \mN_{\mA}(\mB)$ with $d( u, \mN_{\mA}(\mC)) <
\frac{\gamma}{2}$. Then, $\mB \subseteq u \mC u^* \subseteq \mA $ and
$d_{KK}(\mC, u \mC u^*) < \gamma< 10^{-6}$, by
\Cref{u-1-relation}. Note that, as $E$ is of finite Watatani index, it
satisfies the Pimsner-Popa inequality; in particular, so does the
restriction $E_{\restriction u \mC u^*}: u \mC u^*\rightarrow \mB$.
Since $\mB$ is simple, it follows from \cite[Corollary 3.4]{I} that
$E_{\restriction u \mC u^*}$ also has a finite quasi-basis.  Thus, by
\cite[Theorem 3.7]{Dickson}, there exists a $v \in \mU(\mB'\cap \mA)$
such that $v \mC v^* = u \mC u^*$.  Since $\mB'\cap \mA \subseteq
(\mC'\cap \mA) \cup \mC$, it follows that $v \mC v^* = \mC$. Hence, $u
\in \mN_{\mA}(\mC)$, and we are done.
\end{proof}

\begin{cor}\label{u-in-normalizer-simple}
Let $\mA$, $\mB$ and $E$ be as in \Cref{u-close-to-normalizer-simple}.  Then,
for any $0< \gamma < \frac{1}{10^{6}}$,
\[
\Big\{ u \in {\mN}_{\mA}(\mB): \|u - \1\| < \frac{\gamma}{2}\Big\}
\subseteq \bigcap \Big\{ \mN_{\mA}(\mC): \mC \in \mcal{I}_0(\mB \subset
\mA)\Big\}.
\]
In particular, if $\mC_{\mA}(\mB) \subseteq \mB$, then 
\[
\Big\{ u \in {\mN}_{\mA}(\mB): \|u - \1\| < \frac{\gamma}{2}\Big\}
\subseteq \bigcap \Big\{ \mN_{\mA}(\mC): \mC \in \mcal{I}(\mB \subset
\mA)\Big\}.
\]
  \end{cor}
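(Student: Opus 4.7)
The plan is to derive this corollary as an immediate consequence of \Cref{u-close-to-normalizer-simple}. The key trivial observation is that $\1 \in \mN_{\mA}(\mC)$ for every unital $C^*$-subalgebra $\mC$ of $\mA$, so for any $u \in \mU(\mA)$ one has $d(u, \mN_{\mA}(\mC)) \leq \|u - \1\|$.

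More concretely, I would fix $0 < \gamma < 10^{-6}$ and take an arbitrary $u \in \mN_{\mA}(\mB)$ with $\|u - \1\| < \gamma/2$. For each $\mC \in \mcal{I}_0(\mB \subset \mA)$, the inclusion $\1 \in \mN_{\mA}(\mC)$ gives
\[
d\big(u, \mN_{\mA}(\mC)\big) \leq \|u - \1\| < \frac{\gamma}{2}.
\]
Since $u$ already lies in $\mN_{\mA}(\mB)$, \Cref{u-close-to-normalizer-simple} then forces $u \in \mN_{\mA}(\mC)$. As $\mC \in \mcal{I}_0(\mB \subset \mA)$ was arbitrary, $u$ belongs to the indicated intersection.

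The ``in particular'' statement follows at once, since whenever $\mC_{\mA}(\mB) \subseteq \mB$, the defining condition in \eqref{I0BAE-defn} is automatic for every intermediate $C^*$-subalgebra, so $\mcal{I}_0(\mB \subset \mA) = \mcal{I}(\mB \subset \mA)$. There is no real obstacle here; the corollary is purely a packaging of the proposition, with the only nontrivial content being the trivial bound $d(u, \mN_{\mA}(\mC)) \leq \|u - \1\|$ coming from $\1 \in \mN_{\mA}(\mC)$.
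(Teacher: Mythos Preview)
Your proposal is correct and matches the paper's intent: the corollary is stated without proof, as an immediate consequence of \Cref{u-close-to-normalizer-simple} via the trivial bound $d(u,\mN_{\mA}(\mC))\leq \|u-\1\|$ (which is also the second inequality in \Cref{u-1-relation}).
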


For any group $G$, let $\text{Sub}_G$ denote the collection of 
subgroups of $G$.  Then, $G$ admits a canonical action on
$\text{Sub}_G$ via conjugation, i.e.,
\[
G \times \text{Sub}_G \ni (g, H) \mapsto gH g^{-1} \in \text{Sub}_G.
\]
\begin{cor}
  Let $\mA, \mB, E$ and $\alpha $ be as in
  \Cref{u-close-to-normalizer} and let $G:=\mN_{\mA}(\mB)$. Then, with
  respect to the natural conjuation action of $G$ on $\mathrm{Sub}_G$,
  the following hold:
  \begin{enumerate}
\item     The open ball
  \[
  \{ u\in \mN_{\mA}(\mB) : \| u - \1\| < \alpha\}
  \subseteq \bigcap\big\{ \mathrm{Stab}_G\big(\mN_{\mA}(\mB) \cap
  \mN_{\mA}(\mC) \big) : \mC \in \mcal{F}(\mB, \mA, E)\big\};
  \]
and, in particular, if $\mC_{\mA}(\mB) \subseteq \mB$, then     
  \[
  \{ u\in \mN_{\mA}(\mB) : \| u - \1\| < \alpha\}
  \subseteq \bigcap\big\{ \mathrm{Stab}_G\big(\mN_{\mA}(\mB) \cap
  \mN_{\mA}(\mC) \big) : \mC \in \IMS(\mB, \mA, E)\big\}.
  \]
\item In addition, if $\mA$ and $\mB$ are both simple, then for any
  $0 < \gamma < \frac{1}{10^6}$, the open ball
 \[
 \Big\{ u\in \mN_{\mA}(\mB) : \| u - \1\| < \frac{\gamma}{2}\Big\}
  \subseteq \bigcap\big\{ \mathrm{Stab}_G\big(\mN_{\mA}(\mB) \cap
  \mN_{\mA}(\mC) \big) : \mC \in \mcal{I}_0(\mB \subset \mA)\big\};
  \]
and, in particular, if $\mC_{\mA}(\mB) \subseteq \mB$, then     
 \[
 \Big\{ u\in \mN_{\mA}(\mB) : \| u - \1\| < \frac{\gamma}{2}\Big\}
  \subseteq \bigcap\big\{ \mathrm{Stab}_G\big(\mN_{\mA}(\mB) \cap
  \mN_{\mA}(\mC) \big) : \mC \in \mcal{I}(\mB \subset \mA)\big\}.
  \]
  \end{enumerate}

\end{cor}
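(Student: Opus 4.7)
The plan is to observe that this corollary is a direct reformulation of \Cref{u-in-normalizer} and \Cref{u-in-normalizer-simple} in terms of stabilizers for the conjugation action of $G = \mN_{\mA}(\mB)$ on $\mathrm{Sub}_G$. The only additional ingredient is the elementary group-theoretic fact: if $u$ lies in a subgroup $K$, then conjugation by $u$ preserves $K$; applied to two such subgroups, conjugation by a common element preserves their intersection.

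For part (1), I would fix $\mC \in \mcal{F}(\mB, \mA, E)$ and take $u \in \mN_{\mA}(\mB)$ with $\|u - \1\| < \alpha$. By \Cref{u-in-normalizer}, such a $u$ also lies in $\mN_{\mA}(\mC)$, so it belongs to both of the groups $\mN_{\mA}(\mB)$ and $\mN_{\mA}(\mC)$. Setting $H := \mN_{\mA}(\mB) \cap \mN_{\mA}(\mC)$, any $v \in H$ satisfies $uvu^{-1} \in \mN_{\mA}(\mB)$ and $uvu^{-1} \in \mN_{\mA}(\mC)$ (each normalizer being a group), hence $uvu^{-1} \in H$; the symmetric argument applied to $u^{-1}$ gives $uHu^{-1} = H$, which is exactly the assertion $u \in \mathrm{Stab}_G(H)$. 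Since $\mC$ was arbitrary, the required inclusion follows. The special case $\mC_{\mA}(\mB) \subseteq \mB$ needs no extra work, because $\mcal{F}(\mB, \mA, E) = \IMS(\mB, \mA, E)$ in that situation, as noted just after \eqref{FBAE-expression}.

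Part (2) proceeds identically, with \Cref{u-in-normalizer-simple} replacing \Cref{u-in-normalizer} and $\mcal{I}_0(\mB \subset \mA)$ replacing $\mcal{F}(\mB, \mA, E)$; the analogous specialization to $\mcal{I}(\mB \subset \mA)$ uses $\mcal{I}_0(\mB \subset \mA) = \mcal{I}(\mB \subset \mA)$ when $\mC_{\mA}(\mB) \subseteq \mB$, as noted just after \eqref{I0BAE-defn}. There is no real obstacle here: the analytic content, namely the Ino-Watatani estimate used for part (1) and the combination of the Dickson perturbation result with Izumi's finite-index theorem used for part (2), is already packaged into the two referenced corollaries, so what remains is a one-line group-theoretic verification inside $G = \mN_{\mA}(\mB)$.
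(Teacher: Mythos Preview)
Your proposal is correct and matches the paper's approach: the corollary is stated in the paper without proof, being an immediate consequence of \Cref{u-in-normalizer} and \Cref{u-in-normalizer-simple} together with the trivial observation that an element of a subgroup stabilizes that subgroup under conjugation. You have simply written out the one-line group-theoretic verification that the paper leaves implicit.
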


\section{Some finiteness results}\label{finiteness-results}

This section is devoted to some more applications of certain
perturbation results from \cite{IW} and \cite{Dickson} to generalize
some finiteness results by Ino-Watatani (\cite{IW}) and
Khoshkam-Mashood (\cite{KM}).

\subsection{Finiteness of certain compatible intermediate $C^*$-subalgebras }\( \)

\begin{theorem}\label{F1-F2-finite}
Let $\mB \subset \mA $ be an inclusion of unital $C^{*}$-algebras with
a finite-index conditional expectation $E : \mA \rightarrow \mB$. If
one (equivalently, any) of the algebras $\mcal{C}_{\mA}(\mB)$,
$\mcal{Z}(\mB)$ and $\mcal{Z}(\mA)$ is finite dimensional, then the
collection $\mathcal{F} (\mB,
 \mA, E)$ (as in (\ref{FBAE-expression})) is finite.
\end{theorem}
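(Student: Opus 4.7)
The plan is to adapt the Ino-Watatani compactness argument from \cite{IW} to the present (non-simple, non-irreducible) setting, carefully tracking where the defining condition of $\mcal{F}(\mB, \mA, E)$ enters. First, I would dispatch the ``equivalently, any'' clause: when $E$ has finite Watatani index, a quasi-basis makes $\mC_{\mA}(\mB)$ a finitely generated projective module over each of $\mZ(\mA)$ and $\mZ(\mB)$ (see \cite{Wat}), so finite-dimensionality of any one of $\mC_{\mA}(\mB)$, $\mZ(\mA)$, $\mZ(\mB)$ forces it for the other two, using also the inclusions $\mZ(\mA), \mZ(\mB) \subseteq \mC_{\mA}(\mB)$. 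Fix this hypothesis for the rest of the proof.

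The next step is to embed $\mcal{F}(\mB, \mA, E)$ into a bounded subset of a finite-dimensional Banach space via the map $\mC \mapsto e_\mC$, where $e_\mC$ is the Jones projection of the unique compatible finite-index conditional expectation $\mA \to \mC$. The observation from \cite{GS} recalled in the introduction gives $e_\mC \in \mA_1$, and since $e_\mC$ commutes with $\mC \supseteq \mB$ we get $e_\mC \in \mB'\cap \mA_1$. Meanwhile, the dual expectation $\widetilde{E}: \mA_1 \to \mA$ together with a quasi-basis and the identity $\widetilde{E}(\mB'\cap \mA_1) = \mB'\cap \mA$ from \Cref{irreducible-tower}(1) realizes $\mB'\cap \mA_1$ as a finitely generated module over $\mC_{\mA}(\mB)$; our hypothesis then makes $\mB'\cap \mA_1$ finite-dimensional, so the image of $\mC \mapsto e_\mC$ lives in a norm-compact set.

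For the core step I would invoke two inputs from Ino-Watatani: the quantitative estimate \cite[Lemma 3.3]{IW} relating $\|e_\mC - e_\mD\|$ and $d_{KK}(\mC, \mD)$, and the perturbation theorem \cite[Proposition 3.6]{IW}, which, once $d_{KK}(\mC, \mD)$ is below a universal threshold, produces a unitary $v \in \mU(\mB'\cap \mA)$ with $v \mC v^* = \mD$. The defining condition of $\mcal{F}$ now enters decisively: since $\mC_{\mA}(\mB) \subseteq \mC_{\mA}(\mC) \cup \mC$, the unitary $v$ lies in $\mC_{\mA}(\mC)$ or in $\mC$, and in either case $v\mC v^* = \mC$, forcing $\mC = \mD$. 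This yields a uniform separation $\|e_\mC - e_\mD\| \geq \delta_0$ for distinct $\mC, \mD \in \mcal{F}(\mB, \mA, E)$, and a standard $\delta_0/2$-net argument in the finite-dimensional space $\mB'\cap \mA_1$ concludes that $\mcal{F}(\mB, \mA, E)$ is finite.

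The main technical obstacle, I expect, is verifying that \cite[Lemma 3.3]{IW} and \cite[Proposition 3.6]{IW}, originally written for (irreducible inclusions of) simple unital $C^*$-algebras, remain available in our setting. On inspection their proofs use only faithfulness and finite index of $E$ (and the universal property of the basic construction), so the transfer should be largely mechanical but does need to be spelled out; the other delicate point is the module-theoretic equivalence of the three finite-dimensionality hypotheses and its promotion to finite-dimensionality of $\mB'\cap \mA_1$, which rests on an explicit quasi-basis for $\widetilde{E}$.
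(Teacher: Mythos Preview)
Your proposal is correct and follows essentially the same compactness argument as the paper. Two minor differences are worth noting. First, for the perturbation step the paper invokes \cite[Theorem 3.7]{Dickson} (with threshold $\gamma < 10^{-6}$) rather than \cite[Proposition 3.6]{IW}; your concern about whether the Ino--Watatani results transfer to the non-simple, non-irreducible setting is unfounded, since the paper itself applies \cite[Proposition 3.6]{IW} in exactly that generality in \Cref{u-close-to-normalizer}, and \cite[Lemma 3.3]{IW} is cited directly in the proof without modification. Second, the paper handles both finite-dimensionality claims (the equivalence among $\mC_{\mA}(\mB)$, $\mZ(\mA)$, $\mZ(\mB)$, and the finite-dimensionality of $\mB'\cap \mA_1$) by a single citation to \cite[Proposition 2.7.3]{Wat}, so your module-theoretic detour via \Cref{irreducible-tower}(1) and a quasi-basis for $\widetilde{E}$ is not needed.
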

\begin{proof}
First,   note that,   since there exists a finite-index conditional
  expectation from $\mA$ onto $\mB$, it follows from \cite[Proposition
    2.7.3]{Wat} that the $C^*$-subalgebras $\mcal{C}_{\mA}(\mB)$,
  $\mcal{Z}(\mB)$ and  $\mcal{Z}(\mA)$ are either all finite
  dimensional or none of them is finite dimensional.

  Consider the Watatani's $C^*$-basic construction $\mA_1:=C^*(\mA,
  e_1)$ of the inclusion $\mB \subset \mA$ with respect to the
  conditional expectation $E$ and Jones projection $e_1$. In view of
  the preceding paragraph and the given hypothesis, $\mZ(\mB)$ is finite
  dimensional; so, by \cite[Proposition 2.7.3]{Wat} again, the
  relative commutant $\mB'\cap \mA_1$ is finite-dimensional.  Therefore,
  the set
\[
\mathcal{P}:=\{p\in \mB^{'}\cap \mA_1 :   p \mbox{\, is a projection }\}   
\]
is a compact Hausdorff space with respect to the operator norm.

Further, note that if $\mC \in \IMS(\mB, \mA, E)$ with respect to the
compatible conditional expectation $F: \mA \to \mC$, then $F$ has
finite index and $\mC_1 \subseteq \mA_1$ (see \cite[Prop. 2.7(2)]{GS}; so, the
corresponding Jones projection $e_{\mC}$ belongs to $\mC'\cap \mC_1
\subseteq \mB'\cap \mA_1$, where
$\mC_1$ denotes the $C^*$-basic construction of the inclusion $\mC
\subset \mA$ with respect to the finite-index conditional expectation
$F$ and Jones projection $e_{\mC}$.

Fix a $0 < \gamma < 10^{-6}$ and let $\varepsilon =
\frac{\gamma}{2\, \| \mathrm{Ind}_W(E)\|}$. By the compactness of
$\mcal{P}$, there exists a finite cover of $\mcal{P}$ consisting of
open balls of radius $\varepsilon$. So, it suffices to show that each
such $\varepsilon$-ball contains only finitely many Jones projections
corresponding to the members of $\mcal{F}(\mB, \mA, E)$.

Note that, for any two $\mC, \mD\in \IMS(\mB, \mA, E)$, their corresponding
Jones projections $e_{\mC}$ and $e_{\mD}$ are in $ \mB^{'}\cap \mA_1$ and
satisfy
$$
d_{KK}(\mC,\mD)\leq \|\mathrm{Ind}_W(E)\| \|e_{\mC}-e_{\mD}\|,
$$ by \cite[Lemma 3.3]{IW}. In particular, if $e_{\mC}$ and $e_{\mD}$ are
in one of the $\varepsilon$-open balls, then $d_{KK}(\mC, \mD)\leq
\frac{\gamma}{2}< \gamma$. Thus, by \cite[Theorem 3.7]{Dickson}, there exists a unitary $u$ in $\mB'\cap \mA$
such that $\mC = u\mD u^{*}$ (and $\|u- 1\|\leq
16\sqrt{110}\gamma^{\frac{1}{2}}+ 880 \gamma$).

Let $\mC, \mD \in \mathcal{F}(\mB, \mA, E)$ and they be in one
$\varepsilon$-ball as above.  As $u\in \mB^{'}\cap \mA$ and
$\mB^{'}\cap \mA \subseteq \mC_{\mA}(\mD) \cup \mD$, it follows that
$\mC=u\mD u^{*}= \mD$. Thus, each $\varepsilon$-open ball of the cover
contains at most one Jones projection for some member of
$\mcal{F}(\mB, \mA, E)$, as was desired.
\end{proof}
We can then immediately deduce the following generalization of
\cite[Corollary 3.9]{IW}. (The second part follows from
\Cref{irreducible-tower}.)

\begin{cor}\label{IMS-finite}
Let $\mB \subset \mA$ be an irreducible inclusion of unital
$C^*$-algebras with a finite-index conditional expectation $E: \mA \to
\mB$. Then, $ \IMS(\mB, \mA, E)$ is finite.

In particular, $
\IMS(\mA_{k}, \mA_{k+1}, E_{k+1})$ is finite for every $k \geq 0$.
  \end{cor}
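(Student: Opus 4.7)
The plan is to derive this corollary directly from \Cref{F1-F2-finite} together with \Cref{irreducible-tower}, with essentially no extra work needed.

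For the first assertion, I would start by unpacking the irreducibility hypothesis: $\mC_{\mA}(\mB) = \mB' \cap \mA = \C \cdot \1 \subseteq \mB$. In particular, $\mC_{\mA}(\mB)$ is one-dimensional, so the finite-dimensionality hypothesis of \Cref{F1-F2-finite} is satisfied (equivalently, $\mZ(\mA) \subseteq \mC_{\mA}(\mB) = \C$ and $\mZ(\mB) \subseteq \mC_{\mA}(\mB) = \C$ are finite dimensional). Hence \Cref{F1-F2-finite} applies and $\mcal{F}(\mB, \mA, E)$ is finite. Moreover, the observation right after (\ref{FBAE-expression}) notes that $\mcal{F}(\mB, \mA, E) = \IMS(\mB, \mA, E)$ whenever $\mC_{\mA}(\mB) \subseteq \mB$, which is precisely our situation. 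So $\IMS(\mB, \mA, E)$ is finite.

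For the ``in particular'' statement, the idea is to iterate. By \Cref{irreducible-tower}(2), irreducibility of $\mB \subset \mA$ propagates up the tower: each inclusion $\mA_{k-1} \subset \mA_k$ (for $k \geq 1$) is again irreducible. Since each $E_{k+1}: \mA_{k+1} \to \mA_k$ is a finite-index conditional expectation (as recalled in \Cref{Dual-exp}(4)), applying the first part of the corollary to the irreducible inclusion $\mA_k \subset \mA_{k+1}$ with respect to $E_{k+1}$ yields finiteness of $\IMS(\mA_k, \mA_{k+1}, E_{k+1})$ for every $k \geq 0$.

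There is no real obstacle here; the corollary is truly a formal consequence of the two inputs. The only ``care'' step is the bookkeeping that (i) irreducibility forces $\mC_{\mA}(\mB) \subseteq \mB$, which collapses $\mcal{F}(\mB, \mA, E)$ onto the full set $\IMS(\mB, \mA, E)$, and (ii) the tower setup from \Cref{irreducible-tower} plus \Cref{Dual-exp}(4) legitimately feeds the iterated inclusions back into the first assertion.
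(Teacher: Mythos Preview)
Your proposal is correct and matches the paper's approach exactly: the paper states that the corollary is an immediate deduction from \Cref{F1-F2-finite} (using that irreducibility forces $\mC_{\mA}(\mB)=\C\subseteq\mB$, so $\mcal{F}(\mB,\mA,E)=\IMS(\mB,\mA,E)$), and that the second part follows from \Cref{irreducible-tower}. There is nothing to add.
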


For any unital inclusion of von Neumann algebras $\mcal{N}\subset
\mcal{M}$, let $\mcal{L}(\mcal{N}\subset \mcal{M})$ denote the lattice
of intermediate von Neumann subalgebras.  For any such inclusion, if
there exists a faithful normal tracial state $\tr$ on $\mcal{M}$ and
the unique $\tr$-preserving conditional expectation $E_{\mcal{N}}:
\mcal{M} \to \mcal{N}$ has finite Watatani index, then it was shown in
\cite{BG} that if $\mZ(\mN)$ is finite-dimensional and the relative
commutant $\mN'\cap \mM$ equals either $ \mZ(\mN)$ or $\mZ(\mM)$, then
$\mcal{L}(\mN \subset \mM)$ is finite. Analogous to \cite[Theorem
  1.3]{KM} and \Cref{F1-F2-finite}, we now have the following:

\begin{proposition}\label{vNa-finiteness-general}
Let $\mcal{N}\subset \mcal{M}$ be a unital inclusion of finite von
Neumann algebras with a (fixed) faithful normal tracial state $\tr$ on
$\mcal{M}$ such that the unique $\tr$-preserving conditional
expectation $E_{\mcal{N}}: \mcal{M} \to \mcal{N}$ has finite Watatani
index. If one (equivalently, any) of the algebras
$\mcal{C}_\mcal{M}(\mcal{N})$, $\mcal{Z}(\mcal{N})$ and
$\mcal{Z}(\mcal{M})$ is finite dimensional, then the subcollection
\[
\mathcal{L}_0(\mN \subset \mM): = \{\mcal{P} \in
\mcal{L}(\mcal{N}\subset \mcal{M}) : \mcal{N}{'}\cap \mcal{M}
\subseteq \mcal{P}\cup (\mcal{P}{'}\cap \mcal{M}) \}
\]
is finite.
\end{proposition}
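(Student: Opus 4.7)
The plan is to mimic the compactness argument of \Cref{F1-F2-finite} in the von Neumann setting. Because $E_{\mcal{N}}$ has finite Watatani index, \cite[Proposition 2.7.3]{Wat} again tells us that the three algebras $\mcal{C}_\mM(\mN)$, $\mZ(\mN)$, $\mZ(\mM)$ are simultaneously finite-dimensional, so we may assume $\mZ(\mN)$ is finite-dimensional. Let $\mM_1 := C^*(\mM, e_1)$ denote Watatani's $C^*$-basic construction of $\mN \subset \mM$ with respect to $E_{\mcal{N}}$, and note that the same proposition forces $\mN' \cap \mM_1$ to be finite-dimensional. Consequently, the set
\[
\Pi := \{ p \in \mN' \cap \mM_1 : p \text{ is a projection}\}
\]
is a norm-compact Hausdorff space.

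Next I would verify that every $\mcal{P} \in \mcal{L}(\mN \subset \mM)$ carries a Jones projection $e_{\mcal{P}}$ lying in $\mN' \cap \mM_1$. The trace $\tr$ produces a unique $\tr$-preserving normal conditional expectation $E_{\mcal{P}} : \mM \to \mcal{P}$, automatically compatible with $E_{\mcal{N}}$; a standard tower argument (or a suitable version of \cite[Proposition 2.10.11]{Wat}) shows $E_{\mcal{P}}$ also has finite Watatani index, and a direct analog of \cite[Prop.\ 2.7(2)]{GS} places $e_{\mcal{P}} \in \mcal{P}' \cap \mcal{P}_1 \subseteq \mN' \cap \mM_1$. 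Now fix $0 < \gamma < 10^{-6}$, set $\varepsilon := \gamma / (2\|\Ind_W(E_{\mcal{N}})\|)$, and choose a finite cover of $\Pi$ by open $\varepsilon$-balls. The Ino-Watatani estimate \cite[Lemma 3.3]{IW} then gives $d_{KK}(\mcal{P}, \mcal{Q}) < \gamma$ whenever $e_{\mcal{P}}$ and $e_{\mcal{Q}}$ lie in the same $\varepsilon$-ball.

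To finish, I would argue each $\varepsilon$-ball contains at most one Jones projection coming from $\mathcal{L}_0(\mN \subset \mM)$. Given two such $\mcal{P}, \mcal{Q}$ in a common ball, the perturbation result \cite[Theorem 3.7]{Dickson} delivers a unitary $u \in \mN' \cap \mM$ with $u \mcal{Q} u^* = \mcal{P}$. By hypothesis, $\mN' \cap \mM \subseteq \mcal{Q} \cup (\mcal{Q}' \cap \mM)$, so either $u \in \mcal{Q}$ or $u$ commutes with $\mcal{Q}$; in either case $u \mcal{Q} u^* = \mcal{Q}$, forcing $\mcal{P} = \mcal{Q}$. Finiteness of the cover then yields finiteness of $\mathcal{L}_0$. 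The main obstacle I anticipate is justifying that the Jones projection of every intermediate von Neumann subalgebra actually lies inside the Watatani $C^*$-basic construction $\mM_1$ (rather than merely inside the larger Jones-type vN basic construction); this is what unlocks the finite-dimensional compactness and reduces to showing a uniformly finite Watatani index for the $E_{\mcal{P}}$'s using only finiteness of $\Ind_W(E_{\mcal{N}})$ together with the trace.
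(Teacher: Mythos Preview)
Your argument is correct, but you are working much harder than necessary. The paper's proof is two lines: it simply observes that $\mcal{L}(\mN \subset \mM) \subseteq \IMS(\mN, \mM, E_{\mN})$ and then invokes \Cref{F1-F2-finite} directly, since under this inclusion $\mathcal{L}_0(\mN \subset \mM) \subseteq \mcal{F}(\mN, \mM, E_{\mN})$. The point is that for any intermediate von Neumann subalgebra $\mcal{P}$, the unique $\tr$-preserving normal conditional expectation $E_{\mcal{P}}$ is automatically $E_{\mN}$-compatible (by uniqueness of the trace-preserving expectation onto $\mN$), so $\mcal{P} \in \IMS(\mN, \mM, E_{\mN})$ with no further work.

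This observation also dissolves the obstacle you flag at the end. Once $\mcal{P} \in \IMS(\mN, \mM, E_{\mN})$, the finite index of $E_{\mcal{P}}$ and the inclusion $e_{\mcal{P}} \in \mcal{P}_1 \subseteq \mM_1$ are already handled inside the proof of \Cref{F1-F2-finite} (via \cite[Prop.\ 2.7(2)]{GS} and \cite[p.\ 471]{IW}); there is no need to re-establish a uniform finite Watatani index for the $E_{\mcal{P}}$'s from scratch. In short, your proposal reproves \Cref{F1-F2-finite} in situ rather than citing it, and your anticipated difficulty is precisely the step that the $\IMS$ framework was built to absorb.
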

\begin{proof}
Clearly, $\mcal{L}(\mcal{N}\subset \mcal{M}) \subseteq
\IMS(\mcal{N},\mcal{M},E_{\mcal{N}})$. Rest follows from
\Cref{F1-F2-finite}.
\end{proof}

Note that, \Cref{vNa-finiteness-general} also generalizes Watatani's
finiteness result \cite[Theorem 2.2]{Wat2}  to a non-irreducible setting.

\subsection{Finiteness results for non-irreducible inclusions of simple $C^*$-algebras}

In general, if $\mB \subset \mA$ are simple unital $C^*$-algebras and
the inclusion is not irreducible, then the lattice $\mathcal{I}(\mB
\subset \mA)$ need not be finite. For instance, consider the following
easy example:
\begin{example}
  Let $\mA=M_2(\mbb{C})$, $\mB=\mbb{C}I_2$, $\Delta=
  \{\mathrm{diag}(\lambda, \mu)\,\,:\,\,\lambda,\,\mu\,\in \mbb{C}\}$,
  $E: \mA\rightarrow \mB$ denote the canonical (tracial) conditional
  expectation given by
  \[
  E([a_{ij}])= \frac{(a_{11}+a_{22})}{2}I_2,\
  \ [a_{ij}]\in \mA;
  \]
  and, let $F: \mA \rightarrow
  \Delta$ denote another canonical conditional expectation given by
  $F([a_{ij}])= \mathrm{diag}(a_{11},a_{22})$, $[a_{ij}]\in \mA$.

  Note that $u\Delta
  u^*\in \IMS(\mB, \mA, E)$ for all $u \in U(2)$ (by \cite[Lemma
    2.8]{GS}). Also, the set $\{ u\Delta u^*: u\in U(2)\setminus
  \mcal{N}_{\mA}(\Delta)\}$ is infinite because the set of left cosets
  of $ \mcal{N}_{\mA}(\Delta)$ in $U(2)$ is infinite. Hence,
  $\IMS(\mB, \mA, E)$ and, therefore, $\mathcal{I}(\mB \subset \mA)$
  are infinite sets.

  Here is an indirect way of seeing why $U(2)/\mN_{\mA}(\Delta)$ is
  infinite. Suppose, on contrary, that $ \{[u]:=
  u\mcal{N}_{\mA}(\Delta) :  u\in U(2)\} $ is finite. Then, for
  any element $w\in u\mcal{N}_{\mA}(\Delta)$, $w=uv$ for some $v\in
  \mcal{N}_{\mA}(\Delta)$; so,
$$ \alpha(\Delta, w\Delta w^*)= \alpha(\Delta, uv\Delta
(uv)^*)=\alpha(\Delta, u\Delta u^*),
$$
where $\alpha$ is the interior angle (see \cite{GS}). This implies that \( \alpha(\Delta, w\Delta
w^*)=\alpha(\Delta, u\Delta u^*)  \)  for every $w\in [u]$. Hence, the set
\[\{ \alpha( \Delta, u \Delta u^*) :u \in U(2)\} = \left\{\alpha(\Delta,u\Delta
u^*)\,\,:\,\, [u]\in \frac{U(2)}{\mcal{N}_{\mA}(\Delta)}\right\}
\]
is finite. This contradicts the fact that $\{ \alpha( \Delta, u \Delta
u^*) :u \in U(2)\} = [0, \frac{\pi}{2}]$ (see \cite[Corollary
  4.6]{GS}). Thus, the set of left cosets of $\mN_{\mA}(\Delta)$ in
$U(2)$ must be infinite.
\end{example}

However, for a non-irreducible inclusion $\mB \subset
A$ of simple unital $C^*$-algebras with a finite-index conditional
expectation, we shall show in this section that the sublattice
consisting of intermediate $C^*$-subalgebras of $\mB \subset \mA$ which
contain  the centralizer algebra $\mC_{\mA}(\mB)$ is finite.

The following useful observation (whose first part comes from \cite{I}
and the second part is comparable with \cite[Proposition 2.7 (1,
  2)]{GS}) will be needed ahead.

\begin{proposition}\label{LCA-in-LBA}
Let $\mB \subset \mA$ be an inclusion of simple unital
$C^{*}$-algebras with a finite-index conditional expectation $E: \mA
\rightarrow \mB$. Then, for any $\mC\in \mcal{I}(\mB \subset \mA)$,
there exists a finite-index conditional expectation $F: \mA \to \mC$
and, moreover, if $\mC_{\mA}(\mB)\subseteq \mC $, then
$\mcal{L}_{\mC}(\mA)\subseteq \mcal{L}_{\mB} (\mA),$ where
$\mcal{L}_{\mC}(\mA)$ is defined with respect to $F$.

In particular, $\mC_1 \subset \mA_1$, where $\mA_1$ (resp., $\mC_1$) denotes
the $C^*$-basic construction of the inclusion $ \mB \subset \mA$ (resp.,
$\mC \subset \mA$) with respect to $E$ (resp., $F$).
\end{proposition}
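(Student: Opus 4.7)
The plan is to handle the three assertions in sequence: (i) produce a finite-index conditional expectation $F : \mA \to \mC$; (ii) under the hypothesis $\mC_\mA(\mB) \subseteq \mC$, establish the embedding $\mcal{L}_\mC(\mA) \subseteq \mcal{L}_\mB(\mA)$; and (iii) deduce $\mC_1 \subseteq \mA_1$ from the identification $\mA_1 = \mcal{L}_\mB(\mA)$ recorded in \Cref{A1=span}(2).

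For (i), I would follow exactly the route already employed inside the proof of \Cref{u-close-to-normalizer-simple}: the restriction $E|_\mC : \mC \to \mB$ inherits the Pimsner-Popa inequality from $E$, and since $\mB$ is simple, \cite[Corollary 3.4]{I} supplies a finite quasi-basis for $E|_\mC$; \cite[Proposition 6.1]{I} then produces a finite-index $F : \mA \to \mC$. For (ii), I would first arrange $F$ so that $E|_\mC \circ F = E$, that is, $\mC \in \IMS(\mB, \mA, E)$ with this $F$. The simplicity of $\mA$ and $\mB$, together with the hypothesis $\mC_\mA(\mB) \subseteq \mC$, should absorb the relative commutant $\mB' \cap \mA$ — which parametrises the ambiguity among finite-index conditional expectations in the simple setting — into $\mC$ and thereby allow a compatible choice of $F$ to be selected. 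Once compatibility is in place, the argument of \cite[Proposition 2.7]{GS} goes through verbatim: the Hilbert $\mB$-module completion of $\mA$ decomposes naturally as the interior tensor product of the Hilbert $\mC$-module completion of $\mA$ (with respect to $F$) and the Hilbert $\mB$-module completion of $\mC$ (with respect to $E|_\mC$), and each $T \in \mcal{L}_\mC(\mA)$ extends, by tensoring with the identity on the second factor, to a $\mB$-adjointable operator on the whole, yielding an injective $*$-homomorphism $\mcal{L}_\mC(\mA) \hookrightarrow \mcal{L}_\mB(\mA)$. Part (iii) is then immediate from \Cref{A1=span}(2) applied to both inclusions, since $\mC_1 = \mcal{L}_\mC(\mA)$ and $\mA_1 = \mcal{L}_\mB(\mA)$.

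The delicate step I anticipate is arranging the compatibility $E|_\mC \circ F = E$: without it, the two Hilbert-module completions of $\mA$ carry genuinely different norms and the interior tensor-product decomposition collapses. The hypothesis $\mC_\mA(\mB) \subseteq \mC$ is precisely what saves the day: it forces $\mB' \cap \mA$ into $\mC$ and so pins $F$ down up to a perturbation that lives entirely inside $\mC$, making a compatible choice available. If, however, one cannot explicitly produce such a compatible $F$, the fallback is to work with $E' := E|_\mC \circ F$ in place of $E$ and then identify the two basic-construction towers — an identification that again rests on the centralizer hypothesis to eliminate the gauge freedom.
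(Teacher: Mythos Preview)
Your plan diverges from the paper's at the key step (ii), and the divergence hides a gap. You propose to first \emph{arrange} a compatible $F$ with $E_{\restriction_\mC}\circ F = E$ and then invoke the interior tensor-product argument of \cite[Proposition 2.7]{GS}. But you never establish that such a compatible $F$ exists. Your heuristic---that $\mB'\cap\mA\subseteq\mC$ ``absorbs the gauge freedom'' and so a compatible $F$ can be selected---does not translate into a construction: if $G:=E_{\restriction_\mC}\circ F\neq E$ and $E(x)=G(q^*xq)$ with $q\in\mB'\cap\mA\subseteq\mC$, the obvious candidate $F'(x)=F(q^*xq)$ satisfies $F'(c)=q^*cq\neq c$ for $c\in\mC$, so it is not a conditional expectation onto $\mC$. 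Your fallback (replace $E$ by $E'=E_{\restriction_\mC}\circ F$ and then identify the two basic constructions) would still need a proof that $\mcal{L}^E_\mB(\mA)=\mcal{L}^{E'}_\mB(\mA)$ as sets of operators on $\mA$, which you do not supply.

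The paper sidesteps the compatibility issue entirely. It keeps the given $F$, forms $G:=E_{\restriction_\mC}\circ F$, and invokes \cite[Proposition 2.10.9]{Wat} to obtain $q\in\mB'\cap\mA$ with $E(x)=G(q^*xq)$. The hypothesis $\mB'\cap\mA\subseteq\mC$ now gives $q\in\mC$, and since every $T\in\mcal{L}_\mC(\mA)$ is a $\mC$-module map, one has $T(xq)=T(x)q$ and $T^*(yq)=T^*(y)q$. A direct six-line computation then shows $\langle T(x),y\rangle_\mB=\langle x,T^*(y)\rangle_\mB$, so the $\mC$-adjoint $T^*$ already serves as the $\mB$-adjoint. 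This is both shorter and avoids the existence question you left open; the role of the centralizer hypothesis is not to manufacture a compatible $F$, but simply to ensure $q\in\mC$ so that $T$ commutes past it.
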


\begin{proof}
Let $\mC\in \mcal{I}(\mB \subset \mA)$. Then, by \cite[Proposition 6.1]{I},
there exists a conditional expectation $F: \mA\rightarrow \mC$ of finite
index.

Next, suppose that $\mC_{\mA}(\mB)=\mB'\cap \mA \subseteq \mC$. 

Note that, as $E$ is of finite Watatani index, it satisfies the
Pimsner-Popa inequality (\cite{Wat}); in particular, so does the restriction
$E_{\restriction C}: \mC\rightarrow \mB$.  Since $\mB$ is simple, it then follows
from \cite[Corollary 3.4]{I} that $E_{\restriction_{\mC}}$ also has a
finite quasi-basis. Thus, $G:= E_{\restriction \mC}\circ F$ is of finite
Watatani index by \cite[Proposition 1.7.1]{Wat}. Since $E$ and $G$ are
two finite-index conditional expectations from $\mA$ onto $\mB$, by
\cite[Proposition 2.10.9]{Wat}, there exists a $q\in \mB^{'}\cap \mA $
such that
$
E(x)=G(q^*xq) \  \mbox{for all } x\in \mA.
$ 

Then, for any $T\in \mcal{L}_C(\mA)$, we observe that
\begin{eqnarray*}
\langle T(x),y\rangle_{\mB} &=& E(T(x)^{*}y)\\
& = & G(q^{*}T(x)^{*}yq)\\
&=& E_{\restriction_C}(F((T(x)q)^{*}yq)) \\
& = & E_{\restriction_C}(\langle T(x)q, yq\rangle_{C}) \\
& = & E_{\restriction C}(\langle xq,T^{*}(yq)\rangle_{C}) \\
&=& E_{\restriction C}\big(F(q^*x^*T^{*}(yq))\big)\\
& = & E_{\restriction C}\big(F(q^*x^*T^{*}(y)q)\big)\\
&=& G(q^*x^*T^{*}(y)q) \\
& = & E(x^*T^{*}(y))\\
& = & \langle x,T^*(y)\rangle_{\mB}
\end{eqnarray*}
for all $x,y\in \mA$. Hence, $T\in \mcal{L}_{\mB}(\mA)$.
\end{proof}

\noindent {\bf Notation:} For any inclusion $\mB \subset \mA$ of unital
$C^{*}$-algebras, let
$$
\mcal{I}_1(\mB \subset \mA): = \{\mC \in \mathcal{I}(\mB \subset \mA) : \mC_{\mA}(\mB)  \subseteq \mC \}.
$$ Clearly, $\mcal{I}_1$ a sublattice of $\mcal{I}(\mB \subset \mA)$,
$\mcal{I}_1(\mB \subset \mA) \subseteq \mcal{I}_0(\mB
\subset \mA)$ and $\mcal{I}_1(\mB \subset \mA) = \mcal{I}(\mB \subset
\mA)$ if $\mC_{\mA}(\mB) \subseteq \mB$.

We shall need the following adaptation of \cite[Lemma 3.3]{IW}.
\begin{lemma}\label{C-D-inequality}
Let $\mB \subset \mA$ be an inclusion of simple unital $C^{*}$-algebras with a
finite-index conditional expectation $E: \mA \rightarrow \mB$. Then, 
\begin{equation}\label{C-D-inequality-eqn}
d_{KK}(\mC,\mD)\leq \mathrm{Ind}_W(E)\, \|e_{\mC}-e_{\mD}\|
\end{equation}
for all $\mC,\mD \in \mathcal{I}_{1}(\mB \subset \mA)$.
\end{lemma}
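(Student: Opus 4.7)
\textbf{The plan} is to adapt the Ino-Watatani argument from \cite[Lemma~3.3]{IW} to the non-compatible setting. Thanks to Proposition~\ref{LCA-in-LBA}, each $\mC \in \mcal{I}_1(\mB \subset \mA)$ (resp., $\mD$) admits a finite-index conditional expectation $F_\mC: \mA \to \mC$ (resp., $F_\mD: \mA \to \mD$), and the corresponding basic constructions $\mC_1, \mD_1$ embed into $\mA_1$, so the Jones projections $e_\mC, e_\mD$ both lie in $\mA_1$. This is the key technical input replacing the compatibility with $E$ that Ino-Watatani exploited.

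Given $x \in B_1(\mC)$, the natural candidate for a close element of $B_1(\mD)$ is $y := F_\mD(x)$, which is contractive, $\|y\| \le 1$. Working in $\mA_1$, I combine the identity $e_\mC x = x e_\mC$ (because $x \in \mC$) with the defining property $e_\mD z e_\mD = F_\mD(z) e_\mD$ for all $z \in \mA$ to compute
\[
(x - y)\,e_\mD \;=\; (1 - e_\mD)\,x\,e_\mD \;=\; (1 - e_\mD)(e_\mC - e_\mD)\,x \;+\; (1 - e_\mD)\,x\,(e_\mD - e_\mC),
\]
from which $\|(x - y)\,e_\mD\|$ is bounded linearly in $\|e_\mC - e_\mD\|$. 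The symmetric estimate $\|e_\mD\,(x - y)\|$ is obtained by taking adjoints (since $e_\mC$ and $e_\mD$ are self-adjoint and one may rerun the computation with $(x-y)^*$ in place of $x-y$).

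To recover $\|x - y\|$ itself, I apply the pushdown lemma (Remark~\ref{Dual-exp}(6)) inside $\mD_1 \subseteq \mA_1$, yielding $x - y = \Ind_W(F_\mD)\,\widetilde{F_\mD}\!\bigl((x - y)\,e_\mD\bigr)$ and hence $\|x - y\| \le \Ind_W(F_\mD)\,\|(x-y)\,e_\mD\|$. Exchanging the roles of $\mC$ and $\mD$ produces an element of $B_1(\mC)$ close to any given element of $B_1(\mD)$, so the Hausdorff distance between the unit balls, namely $d_{KK}(\mC,\mD)$, is controlled by a multiple of $\|e_\mC - e_\mD\|$. \textbf{The main obstacle} is pinning the constant on the right-hand side as $\Ind_W(E)$ exactly, rather than $\Ind_W(F_\mD)$ (or $\Ind_W(F_\mC)$) together with spare numerical factors. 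In the setting of \cite{IW}, compatibility $E = E_{|\mD} \circ F_\mD$ delivers the multiplicativity $\Ind_W(E) = \Ind_W(E_{|\mD})\,\Ind_W(F_\mD)$ immediately. Without compatibility, I would instead invoke the auxiliary finite-index expectation $G = E_{|\mC} \circ F_\mC$ constructed in the proof of Proposition~\ref{LCA-in-LBA} together with the formula $E(\,\cdot\,) = G(q^* \cdot q)$ for some positive invertible $q \in \mB' \cap \mA$, to compare $\Ind_W(F_\mD)$ and $\Ind_W(E)$ (with the help of simplicity of $\mA$, so that all these indices are positive scalars), absorbing the numerical constants from the norm estimate into the single $\Ind_W(E)$ appearing in \eqref{C-D-inequality-eqn}.
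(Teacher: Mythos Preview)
Your approach via the pushdown lemma in $\mA_1$ is genuinely different from the paper's, and it does yield \emph{a} bound of the form $d_{KK}(\mC,\mD) \le K\,\|e_\mC - e_\mD\|$ --- which is all that the application in Theorem~\ref{L1-finite} actually needs. But the constant your argument produces is $K = 2\max\{\Ind_W(F_\mC),\Ind_W(F_\mD)\}$, and the step where you ``absorb the numerical constants \dots\ into the single $\Ind_W(E)$'' is not justified. The relation $E(\cdot) = G(q^*\cdot q)$ from the proof of Proposition~\ref{LCA-in-LBA} does not give you $2\,\Ind_W(F_\mD) \le \Ind_W(E)$; without compatibility there is no obvious multiplicativity, and the factor of $2$ from your two-term decomposition has nowhere to go. So as stated, the lemma with the exact constant $\Ind_W(E)$ is not proved by your method.

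The paper sidesteps this issue entirely by working not in $\mA_1$ but in the pre-Hilbert $\mB$-module $(\mA,\eta)$, and applying the Pimsner--Popa inequality for $E$ (not for $F_\mD$). Since $e_\mC, e_\mD \in \mcal{L}_\mB(\mA)$ by Proposition~\ref{LCA-in-LBA}, for $a \in B_1(\mC)\setminus\{0\}$ one has $e_\mC\,\eta(a) = \eta(a)$ and hence
\[
\|e_\mC - e_\mD\| \;\ge\; \frac{\|\eta(a - F_\mD(a))\|}{\|\eta(a)\|} \;\ge\; \|\eta(a - F_\mD(a))\| \;\ge\; (\Ind_W(E))^{-1}\,\|a - F_\mD(a)\|,
\]
the last step using $\|\eta(x)\| \ge (\Ind_W(E))^{-1}\|x\|$, which follows from Pimsner--Popa for $E$ together with $\Ind_W(E)\ge 1$. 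This delivers $\Ind_W(E)$ on the nose, with no extraneous factor and no need to compare $\Ind_W(F_\mD)$ with $\Ind_W(E)$. The moral: routing the estimate through the $\mB$-module norm lets the ambient expectation $E$ control everything, whereas your pushdown route ties the constant to the intermediate expectation $F_\mD$, which you then cannot relate back to $E$ without compatibility.
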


\begin{proof}
  We shall simply write $\mcal{I}_1$ for the set $ \mcal{I}_{1}(\mB
  \subset \mA)$.

  Since $E$ has finite index, it satisfies  the Pimsner-Popa
  inequality   (by \cite[Proposition 2.6.2]{Wat}), i.e.,
  \(
  E(x^{*}x)\geq (\mathrm{Ind}_{W}(E))^{-1}x^{*}x
\)
  for all $x \in \mA$; and, since 
  $\Ind_W(E) \geq 1$ (by \cite[Lemma 2.3.1]{Wat}), it follows that
  \[
  E(x^{*}x)
  \geq (\mathrm{Ind}_{W}(E))^{-2}x^{*}x
  \]
  for all $x \in \mA$.  In particular, $\|\eta(x)\|\geq
  (\mathrm{Ind}_{W}(E))^{-1}\|x\|$ for all $x \in \mA$. Moreover,  $\|\eta(a)\|\leq \|a\|\leq 1$ for every
  $a\in B_{1}(\mC)$.

  Since $\mB \subset \mA$ are both simple,  it follows from \Cref{LCA-in-LBA} that
  $\mcal{L}_{\mC}(\mA) \cup \mcal{L}_{\mD}(\mA)\subseteq \mcal{L}_{\mB}
  (\mA)$, for any two $\mC, \mD \in
  \mcal{I}_1$. Thus, $e_{\mC}, e_{\mD} \in \mcal{L}_{\mB} (\mA)$, and
 $$ \|e_{\mC}-e_{\mD}\|\geq
\frac{\|\eta(E_{\mC}(a)-E_{\mD}(a))\|}{\|\eta(a)\|}\geq
\|\eta(a-E_{\mD}(a))\|\geq
\frac{1}{\mathrm{Ind}_{W}(E) }\|a-E_{\mD}(a)\|
$$ for all $a \in B_1(\mC)\setminus\{0\}$.  Therefore, for each $a\in
B_{1}(\mC)$, there exists an element $b:=E_{\mD}(a)\in B_{1}(\mD)$ such that
$\|a-b\|\leq \mathrm{Ind}_{W}(E)\, \|e_{\mC}-e_{\mD}\|$. By a symmetric
arguement, we deduce that
\[
d_{KK}(\mC,\mD)\leq
\mathrm{Ind}_W(E)\, \|e_{\mC}-e_{\mD}\|.
\]
\end{proof}

The following finiteness result is an adaptation (as well as a mild
generalization) of \cite[Corollary 3.9]{IW} (also see \cite[Theorem
  1.3]{KM}). Its proof is an imitation  of that of
\Cref{F1-F2-finite}. However, we provide all steps for the sake of
completeness.

\begin{theorem}\label{L1-finite}
Let $\mB \subset \mA $ be an inclusion of simple unital $C^{*}$-algebras
with a finite-index conditional expectation $E : \mA \rightarrow
\mB$. Then, the sublattice $\mcal{I}_1(\mB \subset \mA)$  is finite.
\end{theorem}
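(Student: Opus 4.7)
The plan is to repeat, essentially verbatim, the compactness-plus-pigeonhole argument used for \Cref{F1-F2-finite}, now feeding in the simple-case ingredients collected in \Cref{LCA-in-LBA} and \Cref{C-D-inequality} in place of the general-case ones. Concretely, I would consider the Watatani basic construction $\mA_1 = C^*(\mA, e_1)$ of $\mB \subset \mA$ with respect to $E$, and exhibit a compact subset of $\mA_1$ in which the Jones projections $e_\mC$ for $\mC \in \mcal{I}_1(\mB \subset \mA)$ all lie, so that any two such projections that are close yield intermediate subalgebras that agree.

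\textbf{Step 1 (finite-dimensional arena).} Since $\mB$ is simple, $\mZ(\mB) = \C\1$ is finite dimensional, so by \cite[Proposition 2.7.3]{Wat} the relative commutant $\mB'\cap \mA_1$ is finite dimensional. Consequently, the set
\[
\mcal{P} = \{ p \in \mB'\cap \mA_1 : p \text{ is a projection}\}
\]
is norm compact.

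\textbf{Step 2 (Jones projections land in $\mcal{P}$).} Let $\mC \in \mcal{I}_1(\mB \subset \mA)$. By \cite[Proposition 6.1]{I} there is a finite-index conditional expectation $F : \mA \to \mC$, and since $\mC_{\mA}(\mB) \subseteq \mC$, \Cref{LCA-in-LBA} gives the inclusion $\mC_1 \subseteq \mA_1$ of the associated basic constructions. The corresponding Jones projection $e_\mC$ lies in $\mC'\cap \mC_1$, and because $\mB \subseteq \mC$ it commutes with $\mB$; hence $e_\mC \in \mB' \cap \mA_1$, i.e.\ $e_\mC \in \mcal{P}$.

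\textbf{Step 3 (covering and pigeonhole).} Fix $0 < \gamma < 10^{-6}$ and set $\varepsilon = \gamma / (2\,\Ind_W(E))$. By compactness, cover $\mcal{P}$ by finitely many open balls of radius $\varepsilon$. Suppose $\mC, \mD \in \mcal{I}_1(\mB \subset \mA)$ have $e_\mC$ and $e_\mD$ in the same such ball. Then $\|e_\mC - e_\mD\| < 2\varepsilon$, so \Cref{C-D-inequality} gives
\[
d_{KK}(\mC, \mD) \leq \Ind_W(E)\, \|e_\mC - e_\mD\| < \gamma < 10^{-6}.
\]
The hypotheses of \cite[Theorem 3.7]{Dickson} are in force here (Izumi's \cite[Corollary 3.4]{I} guarantees that the restrictions $E_{\restriction \mC}$ and $E_{\restriction \mD}$ still have finite quasi-bases, exactly as in the proofs of \Cref{u-close-to-normalizer-simple} and \Cref{C-D-inequality}), so there exists a unitary $u \in \mB'\cap \mA$ with $\mC = u\mD u^*$. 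But $\mD \in \mcal{I}_1$ means $\mC_{\mA}(\mB) = \mB'\cap \mA \subseteq \mD$, whence $u \in \mD$ and $u\mD u^* = \mD$; therefore $\mC = \mD$. Thus each $\varepsilon$-ball contains at most one Jones projection arising from a member of $\mcal{I}_1(\mB \subset \mA)$, forcing $\mcal{I}_1(\mB \subset \mA)$ itself to be finite.

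\textbf{Main obstacle.} Every step after setting up the compactness picture is standard; the actual work was already done in \Cref{LCA-in-LBA} (securing $\mC_1 \subseteq \mA_1$, so that $e_\mC \in \mB'\cap \mA_1$), in \Cref{C-D-inequality} (securing the key Ino--Watatani-type estimate in the $\mcal{I}_1$ setting without a compatibility assumption on the conditional expectations), and in the appeal to Izumi's theorem to apply Dickson's perturbation result. These three ingredients are the only places where simplicity of $\mB$ is used essentially, and having them in hand makes the present theorem a direct transcription of the argument for \Cref{F1-F2-finite}.
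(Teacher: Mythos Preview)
Your argument is correct and is essentially identical to the paper's own proof: both set up the compact projection space $\mcal{P}\subseteq\mB'\cap\mA_1$, use \Cref{LCA-in-LBA} to place each $e_\mC$ there, invoke \Cref{C-D-inequality} to pass from $\|e_\mC-e_\mD\|$ to $d_{KK}(\mC,\mD)$, and then apply Dickson's perturbation theorem together with $\mB'\cap\mA\subseteq\mC$ (or $\mD$) to conclude $\mC=\mD$. The only cosmetic differences are that the paper explicitly records the composite finite-index expectation $E\circ\widetilde{E}:\mA_1\to\mB$ before citing \cite[Proposition~2.7.3]{Wat}, and does not spell out the Izumi verification for Dickson's hypotheses that you (rightly) include.
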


\begin{proof}
 Consider the Watatani's $C^*$-basic construction $\mA_1:= C^*(\mA,
  e_1)$ of the inclusion $\mB \subset \mA$ with respect to the
 finite-index conditional expectation $E$ and Jones projection
 $e_1$. Let $ \widetilde{E} : \mA_1 \rightarrow \mA$ be the
 finite-index dual conditional expectation of $E$. Then,
 $\widetilde{E} \circ {E} : \mA_1 \rightarrow \mB $ is also a
 conditional expectation of finite index. Since $\mA$ is simple, it
 follows from \cite[Proposition 2.7.3]{Wat} that the relative
 commutant $\mB'\cap \mA_1$ is finite-dimensional.  Therefore, the set
\[
\mathcal{P}:=\{p\in \mB^{'}\cap \mA_1 :   p \mbox{\, is a projection}\}   
\]
is a compact Hausdorff space with respect to the operator norm.

Since $\mA$ is simple, $\Ind_W(E)$ is a positive scalar. Fix a $0 <
\gamma < 10^{-6}$ and let $\varepsilon = \frac{\gamma}{2\,
  \mathrm{Ind}_W(E)}$. By the compactness of $\mcal{P}$, there
exists a finite cover of $\mcal{P}$ consisting of open balls of radius
$\epsilon$.  Now, it suffices to show that each such $\epsilon$-ball
contains only finitely many Jones projections corresponding to members
of $\mathcal{I}_{1}$.

Note that, for any two $\mC,\mD \in \mathcal{I}_{1}$, their
corresponding Jones projections $e_{\mC}$ and $e_{\mD}$ are in $
\mB^{'}\cap \mcal{L}_{\mB}(\mA)= \mB^{'}\cap \mA_{1}$, by
\Cref{LCA-in-LBA}; and, by \Cref{C-D-inequality}, they also satisfy the
inequality
$$
d_{KK}(\mC,\mD)\leq \mathrm{Ind}_W(E) \|e_{\mC}-e_{\mD}\|.
$$ 
So, if $e_{\mC}$ and $e_{\mD}$ are in one of the
$\varepsilon$-open balls, then $d_{KK}(\mC,\mD)\leq \frac{\gamma}{2}<
\gamma$. Thus, by \cite[Theorem 3.7]{Dickson}, there exists a unitary
$u$ in $\mB'\cap \mA$ such that $\mD= u\mC u^{*}$ (and $\|u-
1_{\mA}\|\leq 16\sqrt{110}\gamma^{\frac{1}{2}}+ 880 \gamma$). Since
$\mB'\cap \mA = \mC_{\mA}(\mB) \subseteq \mC$, this implies that $u\in
\mC$. Hence, $\mD=u\mC u^{*}= \mC$. This shows that each
$\varepsilon$-open ball of the cover contains at most one Jones
projection for some intermediate $C^{*}$-subalgebra in
$\mathcal{I}_{1}$, and we are done.
\end{proof}

Since countably decomposable type $III$ factors are simple, we obtain
the following generalization of \cite[Theorem 2.5]{TW} (and a partial
generalization of \cite[Theorem 1.3]{KM}).
\begin{cor}
Let $\mcal{N}\subset \mcal{M}$ be an inclusion of countably
decomposable type $III$ factors with a finite-index conditional
expectation. Then, the set $\mcal{L}_1(\mN \subset \mM):=\{
\mathcal{Q} \in \mcal{L}(\mN \subset \mM): \mN' \cap \mcal{M}
\subseteq \mcal{Q} \}$ is finite.
\end{cor}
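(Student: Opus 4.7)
The key observation that makes this corollary almost immediate is the remark the authors insert just before the statement: \emph{countably decomposable type $III$ factors are simple as $C^{*}$-algebras}. Granted this, the plan reduces to reading the conclusion off of \Cref{L1-finite}.

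Concretely, I would proceed as follows. First, I would invoke the classical fact (originally due to Sakai, and standard in the literature on type $III$ factors) that every countably decomposable (i.e.\ $\sigma$-finite) type $III$ factor is simple as a unital $C^{*}$-algebra. Hence both $\mcal{N}$ and $\mcal{M}$ are simple unital $C^{*}$-algebras, and the hypothesis of a finite-index conditional expectation $E:\mcal{M}\to\mcal{N}$ places the inclusion $\mcal{N}\subset\mcal{M}$ exactly within the framework of \Cref{L1-finite}. Consequently the sublattice
\[
\mcal{I}_1(\mcal{N}\subset\mcal{M}) = \big\{\mC\in\mcal{I}(\mcal{N}\subset\mcal{M}): \mC_{\mcal{M}}(\mcal{N})\subseteq\mC\big\}
\]
of intermediate $C^{*}$-subalgebras containing the relative commutant is finite.

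Next, I would compare $\mcal{L}_{1}(\mcal{N}\subset\mcal{M})$ with $\mcal{I}_{1}(\mcal{N}\subset\mcal{M})$. Every intermediate von Neumann subalgebra of $\mcal{N}\subset\mcal{M}$ is in particular an intermediate (unital) $C^{*}$-subalgebra, so $\mcal{L}(\mcal{N}\subset\mcal{M})\subseteq\mcal{I}(\mcal{N}\subset\mcal{M})$. Moreover, by definition of $\mcal{L}_{1}$, any $\mcal{Q}\in\mcal{L}_{1}$ contains $\mcal{N}'\cap\mcal{M} = \mC_{\mcal{M}}(\mcal{N})$. Thus
\[
\mcal{L}_{1}(\mcal{N}\subset\mcal{M}) \subseteq \mcal{I}_{1}(\mcal{N}\subset\mcal{M}),
\]
and finiteness of the right-hand side forces finiteness of the left-hand side.

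Since each step above is a direct quotation of an already-established result, I do not foresee any real obstacle: the entire content of the proof is the assembly of the simplicity of countably decomposable type $III$ factors, \Cref{L1-finite}, and the trivial inclusion $\mcal{L}_{1}\subseteq\mcal{I}_{1}$. The only mild point worth stating explicitly (for the reader who is unfamiliar with the fact) is a precise reference or one-line justification for the simplicity of $\sigma$-finite type $III$ factors; I would simply cite Sakai's book on $C^{*}$- and $W^{*}$-algebras for this.
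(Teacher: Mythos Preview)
Your proposal is correct and matches the paper's own argument exactly: the authors simply remark that countably decomposable type $III$ factors are simple $C^*$-algebras and then read the corollary off of \Cref{L1-finite} via the trivial inclusion $\mcal{L}_1(\mN\subset\mM)\subseteq\mcal{I}_1(\mN\subset\mM)$. No additional ingredients are used (or needed) in the paper.
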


\subsection{Cardinality of $\IMS(\mB, \mA, E)$}

Longo (in \cite{Longo}) showed that the cardinality of the lattice of
intermediate subfactors of any finite-index irreducible inclusion $N
\subset M$ of factors (of type $II_1$ or $III$) is bounded by
$([M:N]^2)^{[M:N]^2}$. More recently, a similar bound was obtained for
the lattice of intermediate $C^*$-algebras of a finite-index inclusion
of simple unital $C^*$-algebras by Bakshi and the first named author
in \cite{BG}, which was achieved by introducing the notion of (interior)
angle between two intermediate $C^*$-subalgebras. Further, Bakshi et
al (in \cite{BE}) improved the bound by proving that the cardinality
of the lattice of intermediate $C^*$-subalgebras of an irreducible
inclusion $\mB\subset \mA$ of simple unital $C^*$-algebras with finite
index is bounded by $9^{[\mA : \mB]_{0}}$.

In this subsection, for any irreducible inclusion $\mB \subset\mA$ of
unital $C^*$-algebras with a finite index conditional expectation $E: \mA \to \mB$,
through some elementary group action technique, we provide an
expression for the cardinality of $\IMS(\mB, \mA, E)$ in terms of the
indices of the unitary normalizers of its members in $\mN_{\mA}(\mB)$ - see
\Cref{IMS-cardinality}. On face, the expression looks a bit dry,
but it might prove useful while performing some concrete calculations.

In \cite[Lemma 2.8(2)]{GS}, it was shown that for any inclusion $\mB
\subset \mA$ of unital $C^*$-algebras with a tracial finite-index
conditional expectation $E: \mA \to B$, $u \mC u^* \in \IMS(\mB, \mA, E)$
for every $\mC\in \IMS(\mB, \mA, E)$ and $u \in \mN_{\mA}(\mB)$. We now
have its following (more obvious) variant (which does not require $E$
to be tracial), which gives an abundance of compatible intermediate
$C^*$-algebras and will also allow us to get an expression for the
cardinality of $\IMS(\mB, \mA, E)$ when the inclusion $\mB \subset
\mA$ is irreducible.

\begin{lemma}\label{IMS-elements}
  Let $\mB\subset \mA$ be an inclusion of unital $C^*$-algebras with a 
  finite-index conditional expectation $E: \mA\rightarrow \mB$ such that
  $\mC_{\mA}(\mB) \subseteq \mB$.  Let  $\mC\in \IMS(\mB, \mA, E)$
  with respect to the compatible finite-index conditional expectation
  $F: \mA\rightarrow \mC$. Then, for every $u\in \mN_{\mA}(\mB)$, $u\mC
  u^*\in \IMS(\mB, \mA, E)$ with respect to the conditional expectation
  $F_u: \mA\rightarrow u\mC u^*$ given by $F_u= \mathrm{Ad}_u\circ
  F\circ \mathrm{Ad}_{u^*}$.
\end{lemma}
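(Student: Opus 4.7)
The plan is to verify three items: (i) $u\mC u^*$ is an intermediate $C^*$-subalgebra of $\mB\subset\mA$ (which is immediate since $u\mB u^* = \mB$); (ii) $F_u := \mathrm{Ad}_u\circ F\circ\mathrm{Ad}_{u^*}$ is a finite-index conditional expectation from $\mA$ onto $u\mC u^*$ (a routine verification: the $u\mC u^*$-bimodule property follows by pushing the inner $\mathrm{Ad}_{u^*}$ through $F$, and $\{u\mu_i u^*\}$ is a quasi-basis for $F_u$ whenever $\{\mu_i\}$ is one for $F$); and (iii) the compatibility condition $E\circ F_u = E$.

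The crux lies in item (iii), which reduces to establishing the \emph{equivariance} of $E$ under the adjoint action of $\mN_{\mA}(\mB)$, namely $E(uxu^*) = u\, E(x)\, u^*$ for every $x\in\mA$ and $u\in\mN_{\mA}(\mB)$. Once equivariance is available, a direct computation gives
\[
E(F_u(x)) \;=\; E\bigl(u F(u^*xu) u^*\bigr) \;=\; u\, E\bigl(F(u^*xu)\bigr)\, u^* \;=\; u\, E(u^*xu)\, u^* \;=\; E(x),
\]
using $E\circ F = E$ in the third equality.

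To prove equivariance, I will consider $E'' := \mathrm{Ad}_{u^*}\circ E\circ \mathrm{Ad}_u$, which is a conditional expectation from $\mA$ onto $u^*\mB u = \mB$ of finite index (the finite-index property passes through conjugation exactly as in step (ii) above). By \cite[Proposition 2.10.9]{Wat}, there exists $q\in\mB'\cap\mA = \mC_{\mA}(\mB)$ with $E(x) = E''(q^* x q)$ for all $x\in\mA$. The hypothesis $\mC_{\mA}(\mB)\subseteq\mB$ forces $q\in\mB'\cap\mB = \mZ(\mB)$; evaluating $E(1)=1$ together with the fact that $E''$ is the identity on $\mB$ yields $q^*q = 1$, and centrality of $q$ then gives $qq^* = 1$, so $q$ is a unitary in $\mZ(\mB)$. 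Since $q$ commutes with $E''(x)\in\mB$, we conclude
\[
E(x) \;=\; E''(q^*xq) \;=\; q^* E''(x)\, q \;=\; E''(x)\, q^* q \;=\; E''(x),
\]
i.e.\ $E = E''$, which is precisely the desired equivariance. The main obstacle is this uniqueness-type argument, which crucially exploits the hypothesis $\mC_{\mA}(\mB)\subseteq\mB$ to place $q$ in the center of $\mB$ and thereby eliminate the ambiguity in Watatani's comparison of finite-index conditional expectations.
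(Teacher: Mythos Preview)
Your proof is correct and hinges on the same idea as the paper's: under the hypothesis $\mC_{\mA}(\mB)\subseteq\mB$, the finite-index conditional expectation $E$ onto $\mB$ is unique, so the compatibility $E\circ F_u=E$ is automatic. The only difference is packaging. The paper invokes the uniqueness of $E$ directly via \cite[Corollary~1.4.3]{Wat} and then observes that $E_{\restriction_{u\mC u^*}}\circ F_u$ is another finite-index conditional expectation onto $\mB$, hence equals $E$. You instead recast the compatibility as the equivariance $E=\mathrm{Ad}_{u^*}\circ E\circ\mathrm{Ad}_u$, and then re-derive this special instance of uniqueness from \cite[Proposition~2.10.9]{Wat} by locating the comparison element $q$ in $\mZ(\mB)$ and showing it is unitary. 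Your route is a bit longer but more self-contained, making explicit \emph{why} the hypothesis $\mC_{\mA}(\mB)\subseteq\mB$ is exactly what is needed; the paper's route is shorter by outsourcing that step to Watatani's corollary.
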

\begin{proof}
As $\mC_{\mA}(\mB)  \subseteq \mB$ and $\mathcal{E}_0(\mA , \mB) \neq
\emptyset$, it follows from \cite[Corollary 1.4.3]{Wat} that $E$ is
the unique  conditional from $ \mA$ onto $ \mB$. Also,
from \cite[Lemma 2.8(1)]{GS} we know that $F_u$ is of finite
index. So, $E_{\restriction_{u\mC u^*}}\circ F_u$ is again a finite-index
conditional expectation from $\mA$ onto $\mB$. Thus, by uniqueness of
$E$, we get $E_{\restriction_{u\mC u^*}}\circ F_u=E$. Hence, $u\mC u^*\in
\IMS(\mB, \mA, E)$ with respect to the compatible conditional
expectation $F_u$.
\end{proof}
  With notations as in \Cref{IMS-elements}, we thus observe that the group
  $G:=\mN_{\mA}(\mB)$ admits an action on the set $\IMS(\mB, \mA, E)$ via
  the map
  \[
 G \times \IMS(\mB, \mA, E) \ni (u, \mC) \mapsto u\mC
  u^*\in \IMS(\mB, \mA, E).
  \]
For any $\mC\in \IMS(\mB, \mA, E)$, its stabilizer
\[
\mathrm{Stab}_G(\mC)= \mN_{\mA}(\mB)\cap \mN_{\mA}(\mC).
\]
  Let $\widehat{\IMS(\mB, \mA, E)}$ denote a set of representatives of
  the orbits of $\IMS(\mB, \mA, E)$ with respect to this action. Then, in
  view of \Cref{IMS-finite}, from some basic theory of group actions,
  we immediately conclude the following:

\begin{proposition}\label{IMS-cardinality}
Let $\mB\subset \mA$ be an irreducible inclusion of unital
$C^*$-algebras with a finite-index conditional expectation $E:
\mA\rightarrow \mB$ and $G:=\mN_{\mA}(\mB)$. Then,
\begin{enumerate}
\item $\big[\mN_{\mA}(\mB) : \mN_{\mA}(\mB)\cap \mN_{\mA}(\mC)\big] < \infty$
  for every $C\in \IMS(\mB, \mA, E)$; and,
\item \( \big|\IMS(\mB, \mA, E)\big|= \sum_{\mC\in \widehat{\IMS(\mB,
    \mA, E)}} \big[\mN_{\mA}(\mB) : \mN_{\mA}(\mB)\cap \mN_{\mA}(\mC)\big].
  \)
\end{enumerate}
\end{proposition}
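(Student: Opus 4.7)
The plan is to treat both assertions as immediate consequences of the orbit–stabilizer theorem, once one has set up the $G$-action on $\IMS(\mB,\mA,E)$ from \Cref{IMS-elements}. Since $\mB \subset \mA$ is irreducible, $\mC_{\mA}(\mB) = \mB'\cap\mA = \C\cdot\1 \subseteq \mB$, so the hypothesis of \Cref{IMS-elements} is satisfied and the formula $(u,\mC)\mapsto u\mC u^{*}$ indeed defines an action of $G=\mN_{\mA}(\mB)$ on $\IMS(\mB,\mA,E)$.

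The first step is to identify the stabilizer of any $\mC\in \IMS(\mB,\mA,E)$. By definition,
\[
\mathrm{Stab}_{G}(\mC)=\{u\in \mN_{\mA}(\mB): u\mC u^{*}=\mC\}
= \mN_{\mA}(\mB)\cap \mN_{\mA}(\mC),
\]
which is the group appearing in the statement. Next, observe that irreducibility allows us to apply \Cref{IMS-finite}, giving $|\IMS(\mB,\mA,E)|<\infty$. Since each orbit of the $G$-action is contained in $\IMS(\mB,\mA,E)$, every orbit is finite, and the orbit–stabilizer theorem then yields
\[
\big[\mN_{\mA}(\mB):\mN_{\mA}(\mB)\cap \mN_{\mA}(\mC)\big]
= |G\cdot \mC|\leq |\IMS(\mB,\mA,E)|<\infty,
\]
which proves (1).

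For (2), use that $\IMS(\mB,\mA,E)$ decomposes as a disjoint union of $G$-orbits indexed by any set $\widehat{\IMS(\mB,\mA,E)}$ of orbit representatives. Counting cardinalities across this partition and invoking orbit–stabilizer once more gives
\[
\big|\IMS(\mB,\mA,E)\big|
=\sum_{\mC\in \widehat{\IMS(\mB,\mA,E)}}|G\cdot\mC|
=\sum_{\mC\in \widehat{\IMS(\mB,\mA,E)}}\big[\mN_{\mA}(\mB):\mN_{\mA}(\mB)\cap \mN_{\mA}(\mC)\big],
\]
as required. There is no substantive obstacle here; the entire content lies in recognizing that \Cref{IMS-elements} legitimately produces a $G$-action (which uses $\mC_{\mA}(\mB)\subseteq\mB$, guaranteed by irreducibility) and combining it with the finiteness statement from \Cref{IMS-finite}. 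The only mild subtlety is checking that the stabilizer description really simplifies to $\mN_{\mA}(\mB)\cap\mN_{\mA}(\mC)$, which is direct from the definition of the normalizer.
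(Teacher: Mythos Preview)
Your proof is correct and follows exactly the approach of the paper: the paper sets up the $G$-action via \Cref{IMS-elements}, identifies the stabilizer as $\mN_{\mA}(\mB)\cap\mN_{\mA}(\mC)$, and then states that the proposition follows immediately from \Cref{IMS-finite} together with basic group-action (orbit--stabilizer) considerations. You have simply spelled out those details.
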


\begin{lemma}
Let $\mA, \mB$, $E$ and $G$ be as in \Cref{IMS-cardinality}
and $\mC\in \IMS(\mB, \mA, E)$. Then,
\[
[\mA:\mD]_{0}= [\mA : \mC]_{0}\ \text{and} \
[\mD:\mB]_{0}= [\mC : \mB]_{0}
\]
for very $\mD \in \mathrm{Orb}_G(\mC)$.
 \end{lemma}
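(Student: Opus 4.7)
The plan is to transport conditional expectations through the inner automorphism $\mathrm{Ad}_u$. By definition of the orbit, $\mD = u\mC u^*$ for some $u \in G = \mN_{\mA}(\mB)$, so $\mathrm{Ad}_u$ is a $*$-automorphism of $\mA$ that sends $\mB$ to $\mB$ (since $u$ normalizes $\mB$) and $\mC$ to $\mD$. It therefore restricts to $*$-isomorphisms of pairs $(\mB \subseteq \mC) \to (\mB \subseteq \mD)$ and $(\mC \subseteq \mA) \to (\mD \subseteq \mA)$.

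First I would check that the four minimal indices are even defined. Irreducibility of $\mB \subset \mA$ gives $\mC_{\mA}(\mB) = \C\1$, and for any intermediate algebra $\mP$ with $\mB \subseteq \mP \subseteq \mA$,
\[
\mZ(\mP) \;=\; \mC_{\mA}(\mP) \cap \mP \;\subseteq\; \mC_{\mA}(\mB) \;=\; \C \1.
\]
In particular $\mZ(\mA) = \mZ(\mB) = \mZ(\mC) = \mZ(\mD) = \C\1$. Combined with the existence of finite-index conditional expectations onto each member (from $E$ and the compatible $F:\mA\to\mC$, together with appropriate restrictions and compositions), \Cref{index-defn} supplies unique minimal conditional expectations for each of $\mB \subset \mC$, $\mB \subset \mD$, $\mC \subset \mA$ and $\mD \subset \mA$, and the corresponding Watatani indices are positive scalars.

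The main step is then a straightforward bijection argument. Given any $F \in \mE_0(\mC, \mB)$ with quasi-basis $\{\lambda_i\}$, the map $F_u := \mathrm{Ad}_u \circ F \circ \mathrm{Ad}_{u^*} : \mD \to \mB$ is a conditional expectation (well-defined precisely because $u\mB u^* = \mB$), has quasi-basis $\{u\lambda_i u^*\}$, and satisfies
\[
\Ind_W(F_u) \;=\; u\,\Ind_W(F)\, u^* \;=\; \Ind_W(F),
\]
where the last equality uses that $\Ind_W(F) \in \mZ(\mC) = \C\1$ is a scalar. Since $F \mapsto F_u$ is a Watatani-index-preserving bijection $\mE_0(\mC,\mB) \to \mE_0(\mD,\mB)$ (its inverse being conjugation by $u^*$), taking infima yields $[\mD:\mB]_0 = [\mC:\mB]_0$. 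An entirely parallel argument applied to conditional expectations $\mA \to \mC$ (now using $u\mA u^* = \mA$ and the fact that the index lies in $\mZ(\mA) = \C\1$) gives $[\mA:\mD]_0 = [\mA:\mC]_0$.

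I do not anticipate any real obstacle; the only point requiring care is confirming that the Watatani indices in play are central scalars and hence fixed under $\mathrm{Ad}_u$, which is immediate from irreducibility. No perturbation or finiteness machinery from the earlier sections is needed.
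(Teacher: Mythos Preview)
Your argument is correct, and it is somewhat more direct than the paper's. The paper first observes that irreducibility forces all the relevant intermediate expectations to be unique (hence minimal), then obtains $[\mA:\mD]_0 = [\mA:\mC]_0$ by citing \cite[Lemma 2.8]{GS} (which is precisely your conjugation argument applied to $\mE_0(\mA,\mC)$), and finally deduces $[\mD:\mB]_0 = [\mC:\mB]_0$ indirectly from the multiplicativity relation $[\mA:\mB]_0 = [\mA:\mD]_0[\mD:\mB]_0$ of \cite{KW}. You instead run the same index-preserving bijection $F\mapsto \mathrm{Ad}_u\circ F\circ \mathrm{Ad}_{u^*}$ on both $\mE_0(\mC,\mB)$ and $\mE_0(\mA,\mC)$ and take infima, which is more symmetric and avoids invoking the multiplicativity theorem altogether. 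The only mildly implicit point in your write-up is that $E_{\restriction_\mC}:\mC\to\mB$ indeed has finite index (so that $\mE_0(\mC,\mB)\neq\emptyset$); this is standard for compatible intermediate subalgebras (see \cite[p.~471]{IW}) and the paper uses it equally tacitly.
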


\begin{proof}
  Since $\mB\subset \mA$ is an irreducible inclusion, the conditional
  expectation $E:\mA  \rightarrow \mB$ is unique, by \cite[Corollary
    1.4.3]{Wat}. Hence, it is minimal.

  Suppose that $\mC\in \IMS(\mB, \mA, E)$ with respect to the
  compatible finite-index conditional expectation $F: \mA\rightarrow
  C$. Note that $\mathrm{Orb}_G(\mC) =
  \{ u \mC u^* : u \in G\}$.

Let $\mD \in \mathrm{Orb}_G(\mC)$. Then, $\mD = u \mC u^*$ for some $u
\in \mN_{\mA}(\mB)$ and $\mD \in \IMS(\mB, \mA, E)$ with respect to
the conditional expectation $F_u:=\mathrm{Ad}_u \circ F \circ
\mathrm{Ad}_{u^*}$. Also, $\mZ(\mD)= \mZ(u \mC u^*) = u\mZ(\mC)u^* = \C =
\mZ(\mA)$, $\mD{'}\cap \mA \subseteq \mB{'}\cap \mA =\C$ and
$\mB{'}\cap \mD \subseteq \mB{'}\cap \mA=\C$; so, by \cite[Corollary
  1.4.3]{Wat} again, $F_u$ and $E_{\restriction_{\mD}}$ are both
unique and, hence, minimal. Thus, from \cite[Theorem 3]{KW}, we get \(
[\mA:\mB]_0=[\mA: \mD]_0[\mD:\mB]_0 \) for every $\mD \in
\mathrm{Orb}_G(\mC)$. Note that, $[\mA: \mD]_{0} = [\mA: u\mC u^*]_{0} =
       [\mA: \mC]_0$, by \cite[Lemma 2.8]{GS}, which then shows that
       \( [\mD :\mB]_{0}= [\mC: \mB]_{0}\), as well.
\end{proof}

\section{Comparisons between various notions of distance}
\label{comparisons}

In this section, we discuss the various notions of distance between
subalgebras of a given operator algebra by Christensen and
Mashood-Taylor and make comparisons between them and the
Kadison-Kastler distance.

\subsection{Christensen's  distances}

Christensen has given two notions of distances and used them
effectively in proving some significant perturbation results.  The
first one is defined between subalgebras of any tracial von Neumann
algebra (\cite{Ch1}) and, more generally, the other one is
defined between subalgebras of any $C^*$-algebra (\cite{Ch2}). We
briefly recall both notions in reversed order and state some facts
related to them.

 \subsubsection{Christensen's distance between subalgebras of a normed algebra}

 Though Christensen gave the notion of distance between subalgebras of a
 $C^*$-algebra, the same can be used in the normed algebra context as
 well.

 Let $A$ be a normed algebra. Recall from \cite{Ch2} that, for $C,\, D
 \in \mathrm{Sub}_A$ and a scalar $\gamma > 0$, $C \subseteq_{\gamma}
 D$ if for each $x \in B_1(C)$, there exists a $
 y \in D$ such that $\|x-y\| \leq \gamma$; and, the Christensen
 distance between $C$ and $D$ is defined by
    \begin{equation}\label{d0}
    d_0(C,D) = \inf\{\gamma > 0 \, : \, C \subseteq_{\gamma}
    D \text{ and } D \subseteq_{\gamma} C\}.
    \end{equation}

    Here are some elementary observations regarding the Christensen
    distance, some of which will be used ahead.
    \begin{remark}\label{d0-basics}  Let $A$ be a normed algebra.
      \begin{enumerate}
      \item $d_0(C, D) \leq 1$ for all $C, D \in
        \text{Sub}_A$.
\item $d_0(C, D) = d_0(\overline{C}, D)= d_0(\overline{C},
  \overline{D})$ for all $C, D \in \text{Sub}_A$.
  \item $d_0$ is not a metric on
    $\mathrm{Sub}_A$ (as it  does
    not satisfy the triangle inequality). However, $d_0$ and $d_{KK}$
    are ``equivalent" in the sense that
  \[
  d_0 (C, D) \leq d_{KK}(C, D) \leq 2 d_0(C, D)
  \]
  for all $C, D \in \mathrm{Sub}_A$. (\cite[Remark 2.3]{Ch3})
\item If $C, D \in \mathrm{Sub}_A$ and $C$ is a norm closed proper
  subalgebra of $D$, then $d_0 (C , D ) =1$. (Follows from \cite[Proposition
    2.4]{Ch3} - also compare with \Cref{basic-facts}(3).)
      \end{enumerate}
      \end{remark}

\subsubsection{Christensen's distance between subalgebras of a  tracial von Neumann algebra}\label{dC-section}

For a von Neumann algebra $\mM$, let
\[
*\text{-Sub}^u_{\mM} :=\{ \text{ *-subalgebras of } \mM \text{ containing } 1_{\mM}\} \text{ and }
\]
\[
W^*\text{-}\mathrm{Sub}_{\mM} :=\{ \text{von Neumann subalgebras of }
\mM\ \text{(\,possibly with different unit)}\}.
\]
  Let $\mM$ be a von Neumann
 algebra with a (fixed) faithful normal tracial state $\tau$. Then,
 $\mM$ inherits a natural inner product structure with respect to the
 inner product $\langle x, y \rangle_{\tau} = \tau(y^*x)$, $x, y \in
 \mM$. The corresponding norm on $\mM$ will be denoted by $\|x\|_{\tau}$,
 i.e., $\|x\|_{\tau} = \tau(x^*x)^{1/2}$, $x \in \mM$; and, the Hilbert
 space completion of $\mM$ is denoted by $L^2(\mM, \tau)$.  There is a
 natural embedding of $\mM$ into $B(L^2(\mM, \tau))$ via left
 multiplication and this allows us to consider $\mM$ as a von Neumann
 algebra on $L^2(\mM, \tau)$.

 Further, in order to distinguish the elements of the inner-product
 structure $\mM$ from that of the von Neumann algebra $\mM$, as is
 standard, we shall use the notation $\widehat{x}$ for the elements of the
 inner-product space $\mM$. Thus, $\langle \widehat{x}, \widehat{y} \rangle_\tau
 = \tau(y^* x)$ and $x(\widehat{1}) = \widehat{x}$ for all $x, y \in \mM$.
  
    Recall from \cite{Ch1} that, for $P,\, Q \in \mathrm{Sub}_{\mM}$
    and a scalar $\gamma > 0$, $P \subset_{\gamma} Q$ if for each $x
    \in B_1(P)$ (w.r.t. $\|\cdot\|$), there exists a $ y \in Q$ such
    that $\|\widehat{x}-\widehat{y}\|_{\tau} \leq \gamma$; and, the Christensen distance
    between $P$ and $Q$ is defined by
    \begin{equation}\label{d-tau}
    d_C(P,Q) = \inf\{\gamma > 0 \, |\, P \subset_{\gamma}
    Q\, \text{and}\, Q \subset_{\gamma} P\}.
    \end{equation}  
    Here are some facts related to the Christensen distance $d_C$. 
    \begin{remark}
      With running notations, the following hold:
      \begin{enumerate}
      \item $d_C(P, Q) \leq 1$ for all $P, Q\in \mathrm{Sub}_{\mM}$.
      \item $d_C $ is a complete metric on
  $W^*\text{-}\mathrm{Sub}_{\mM}\,$. (\cite[Theorem, 5.1]{Ch1})

      \item From \Cref{dMT=dC} and \Cref{dMT-SOT} ahead, it  follows that
          \[
          d_C(P, Q) = d_C(\overline{P}^{S.O.T.},
          \overline{Q}^{S.O.T.}) = d_C(P'', Q'')
          \]
                    for all $P, Q \in *\text{-}\mathrm{Sub}_{\mM}^u$.
      \end{enumerate}
      \end{remark}

 Here is  a well known observation which will be essential
for our discussion - see, for instance, \cite[Proposition 2.6.4]{AP}.
\begin{proposition}\label{vNa-ball-complete}
  Let $M$ be a unital $C^{*}$-algebra equipped with a faithful tracial
  state $\tau$. Then, $M$ is a von Neumann algbra  on $L^{2}(M, \tau)$
  iff $\widehat{B_1(M)}$ is complete w.r.t. $\|.\|_{\tau}$.
  
In particular, $M$ is a $W^*$-algebra iff $\widehat{B_1(M)}$ is
complete w.r.t. $\|.\|_{\tau}$.
\end{proposition}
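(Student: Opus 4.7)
The plan is to prove this by realizing $M$ on $L^{2}(M,\tau)$ via left multiplication and exploiting the fact that $\widehat{1}$ is a cyclic trace vector. The key structural observation I need first is that $\widehat{1}$ is separating for the double commutant $M''$ inside $B(L^{2}(M,\tau))$. To see this, define the anti-unitary $J$ on $L^{2}(M,\tau)$ by $J\widehat{x}=\widehat{x^{*}}$ (well-defined since $\tau$ is a trace) and observe that for $a,b\in M$, $(JaJ)\widehat{b}=\widehat{ba^{*}}$, i.e., $JaJ$ is right multiplication by $a^{*}$, which commutes with left multiplication by any element of $M$. Hence $JMJ\subseteq M'$, so $M'\widehat{1}\supseteq JM\widehat{1}=J\widehat{M}=\widehat{M}$, which is dense; equivalently $\widehat{1}$ is separating for $M''$. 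I also record the identity $x\widehat{a}=(Ja^{*}J)\widehat{x}$ for $x,a\in M$, which will drive the convergence arguments.

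For the forward direction, assume $M$ is a von Neumann algebra on $L^{2}(M,\tau)$, and let $(\widehat{x_{n}})$ be a $\|\cdot\|_{\tau}$-Cauchy sequence with $x_{n}\in B_{1}(M)$. Then $\widehat{x_{n}}\to\xi$ in $L^{2}$ for some $\xi$. For each $a\in M$ I apply the identity above to get $x_{n}\widehat{a}=(Ja^{*}J)\widehat{x_{n}}\to(Ja^{*}J)\xi$, so $(x_{n})$ converges pointwise on the dense subspace $M\widehat{1}$. Combined with the uniform bound $\|x_{n}\|\leq 1$, a standard $\varepsilon/3$-argument gives SOT convergence of $(x_{n})$ on all of $L^{2}(M,\tau)$ to some $x\in B(L^{2}(M,\tau))$ with $\|x\|\leq 1$. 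Since $M=M''$ is SOT-closed, $x\in B_{1}(M)$, and $\widehat{x}=x\widehat{1}=\lim\widehat{x_{n}}=\xi$, so $\widehat{B_{1}(M)}$ is $\|\cdot\|_{\tau}$-complete.

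For the converse, assume $\widehat{B_{1}(M)}$ is $\|\cdot\|_{\tau}$-complete; I will show $M=M''$ (equivalently, $B_{1}(M)=B_{1}(M'')$). Pick $x\in B_{1}(M'')$. By the Kaplansky density theorem applied to the $*$-subalgebra $M\subseteq M''$, there is a net $(x_{\alpha})\subseteq B_{1}(M)$ with $x_{\alpha}\to x$ in SOT; in particular $\widehat{x_{\alpha}}=x_{\alpha}\widehat{1}\to x\widehat{1}$ in $L^{2}$, so $(\widehat{x_{\alpha}})$ is $\|\cdot\|_{\tau}$-Cauchy. By the completeness hypothesis there exists $y\in B_{1}(M)$ with $\widehat{y}=x\widehat{1}$, i.e.\ $y\widehat{1}=x\widehat{1}$. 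Since $\widehat{1}$ is separating for $M''$ and both $x,y\in M''$, this forces $x=y\in M$, establishing $M=M''$.

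Finally, the "in particular" clause is immediate: a concrete von Neumann algebra on $L^{2}(M,\tau)$ is by definition a $W^{*}$-algebra, and conversely, a $W^{*}$-algebra together with a faithful tracial state is faithfully represented via the GNS construction on $L^{2}(M,\tau)$, with image closed in the weak operator topology, so the first equivalence transfers. The main obstacle I expect is the verification that $\widehat{1}$ is separating for $M''$ without assuming a priori that $M$ is weakly closed; this is precisely what the modular conjugation argument in the opening paragraph is designed to supply, using only the trace property of $\tau$.
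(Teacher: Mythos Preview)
Your argument is correct and is essentially the standard one. Note, however, that the paper does not give its own proof of this proposition: it merely records it as well known and points to \cite[Proposition 2.6.4]{AP}. So there is no ``paper's proof'' to compare against; your write-up is precisely the sort of argument that reference contains (GNS with a trace vector, $J$ implementing right multiplication so that $\widehat{1}$ is separating for $M''$, Kaplansky density for the converse).

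One small phrasing point in your converse direction: you take a \emph{net} $(x_\alpha)$ from Kaplansky, observe $\widehat{x_\alpha}\to x\widehat{1}$ in $L^2$, call it ``$\|\cdot\|_\tau$-Cauchy'', and then invoke the completeness hypothesis. Completeness is a statement about sequences, so it is cleaner to say: completeness of $\widehat{B_1(M)}$ means it is a closed subset of $L^2(M,\tau)$, and since a net in a closed subset of a metric space has its limit in that subset (equivalently, extract a sequence $\widehat{x_{\alpha_n}}$ with $\|\widehat{x_{\alpha_n}}-x\widehat{1}\|_\tau<1/n$), we get $x\widehat{1}\in\widehat{B_1(M)}$. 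The rest of the argument is unchanged. Similarly, your treatment of the ``in particular'' clause tacitly uses that a faithful tracial state on a $W^*$-algebra yields a normal GNS representation; this is fine here since finiteness of the $W^*$-algebra is forced by the existence of such a trace, but it is worth stating explicitly if you want the deduction to be airtight.
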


\subsection{Mashood- Taylor distance between subalgebras of a tracial von Neumann algebra}

Given a $II_1$-factor $M$ with a unique faithful normal tracial state
$\tau$ and any two subfactors $P$ and $Q$ of $M$, Mashood and Taylor
(in \cite{MT}) consider the Hausdorff distance
(w.r.t. $\|\cdot\|_{\tau}$) between ${B_1(P) }$ and
${B_1(Q)}$ and prove some nice results related to continuity
of `Jones' index. They also mention (on \cite[Page
  56]{MT}) that this distance gives the same topology on the set of
subfactors of $M$ as the one given by the Christensen's distance $d_C$. We shall
show in this section that  $d_C= d_{MT}$.

Mashood-Taylor  distance can, in fact, be defined in a slightly more
general setup as follows:

For a von Neumann algebra $\mM$ with a faithful normal tracial state $\tau$, for
any pair $P, Q \in \text{Sub}_\mM$, the Mashood-Taylor distance between
$P$ and $Q$ is defined as
\begin{equation}\label{dMT-expresssion}
d_{MT} (P, Q) = d_{H, \|\cdot\|_\tau } (\widehat{B_1(P)}, \widehat{B_1(Q)}),
\end{equation}
where $d_{H, \|\cdot\|_\tau}$ denotes the Hausdorff distance with
respect to the metric induced by the norm $\|\cdot\|_\tau$ and
$\widehat{S} \subseteq L^2(\mM, \tau)$ for $S \subseteq \mM$ (as in
\Cref{dC-section}).

\begin{lemma}\label{dMT-facts}
  Let $\mM$ be a von Neumann algebra with a  faithful normal
  tracial state $\tau$. Then, the following hold:
\begin{enumerate}
\item  $d_{MT}$  is a semi-metric on $\mathrm{Sub}_{\mM}$.

\item $d_{MT}$ is a metric on $W^*\text{-}\mathrm{Sub}_{\mM}$. 

\item \(
d_C(P,Q)\leq d_{MT}(P,Q) 
\)    for all $P, Q \in \text{Sub}_{\mM}$.
 \end{enumerate}
\end{lemma}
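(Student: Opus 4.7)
My plan is to dispatch the three assertions in order of increasing (slight) subtlety.

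For (1), I would simply observe that $d_{MT}$ is, by construction, the Hausdorff distance induced by the genuine metric $\|\cdot\|_\tau$ on $L^2(\mM, \tau)$, applied to the subsets $\widehat{B_1(P)}$ and $\widehat{B_1(Q)}$. Since the Hausdorff distance on subsets of any metric space is always a semi-metric (symmetry, non-negativity, $d_{MT}(P,P)=0$, and the triangle inequality are all immediate from the corresponding properties of $\|\cdot\|_\tau$), no extra argument is required. I would state this in one sentence and cite the standard fact rather than verify the four axioms individually.

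For (2), the only nontrivial point is to upgrade (1) to a genuine metric on $W^*\text{-}\mathrm{Sub}_{\mM}$, i.e., to show that $d_{MT}(P,Q)=0$ forces $P=Q$. The standard equivalence $d_{H, \|\cdot\|_\tau}(A, B) = 0 \Leftrightarrow \overline{A} = \overline{B}$ reduces this to showing that the sets $\widehat{B_1(P)}$ are already $\|\cdot\|_\tau$-closed for $W^*$-subalgebras. This is where I would invoke \Cref{vNa-ball-complete}: since each $W^*$-subalgebra $P$ inherits a faithful normal tracial state $\tau|_P$ (renormalized if $1_P \neq 1_\mM$), with corresponding $L^2$-norm agreeing with the restriction of $\|\cdot\|_\tau$, the proposition forces $\widehat{B_1(P)}$ to be $\|\cdot\|_\tau$-complete, hence closed. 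Consequently $\widehat{B_1(P)} = \widehat{B_1(Q)}$, and since the map $x \mapsto \widehat{x}$ is injective on $\mM$ (by faithfulness of $\tau$), we conclude $B_1(P) = B_1(Q)$, whence $P = Q$.

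For (3), the inequality is essentially definitional. Fix $\gamma > d_{MT}(P,Q)$. Then for every $x \in B_1(P)$ there exists $y \in B_1(Q)$ with $\|\widehat{x} - \widehat{y}\|_\tau \leq \gamma$, and \emph{a fortiori} $y \in Q$; the symmetric statement holds by exchanging $P$ and $Q$. Thus $\gamma$ is admissible in the infimum (\ref{d-tau}) defining $d_C(P,Q)$. Taking the infimum over such $\gamma$ yields $d_C(P,Q) \leq d_{MT}(P,Q)$.

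The main (mild) obstacle is the bookkeeping in (2) when $P$ and $Q$ are allowed to have units different from $1_\mM$, as the paper's convention permits. I would handle this by the brief remark that, in that case, $P$ acts as a tracial von Neumann algebra on the reducing subspace $1_P L^2(\mM,\tau)$ and \Cref{vNa-ball-complete} applies verbatim to $P$ equipped with $\tau|_P / \tau(1_P)$, so that $\|\cdot\|_\tau$-completeness of $\widehat{B_1(P)}$ persists and the above closure argument goes through unchanged.
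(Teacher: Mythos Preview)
Your proposal is correct and follows essentially the same route as the paper: (1) is dismissed as an immediate consequence of the Hausdorff-distance axioms, (2) is reduced via \Cref{vNa-ball-complete} to the standard fact that the Hausdorff distance is a genuine metric on closed bounded subsets, and (3) is the obvious comparison of infima. Your extra paragraph handling the case $1_P \neq 1_{\mM}$ is a welcome bit of care that the paper leaves implicit.
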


\begin{proof}
  (1) follows readily from the definition.\smallskip

  (2): Note that $\widehat{B_1(\mQ)}$ is closed and bounded in $L^2(\mM,
  \tau)$ for every $\mQ \in W^*\text{-Sub}_{\mM}$ by
    \Cref{vNa-ball-complete}. And, it is well known that the Hausdorff distance is a metric on the collection of closed and bounded subsets of a metric space.\smallskip

(3): Let $P, Q \in \text{Sub}_{\mM}$ and $\epsilon >
0$. By the definition of $d_{MT}(P,Q)$, for each $x \in B_{1}(P)$,
there exists a $z \in B_{1}(Q)$ such that
$\|\widehat{x}-\widehat{z}\|_{\tau}\leq d_{MT}(P,Q) + \epsilon$. This implies
that $ P \subset_{d_{MT}(P,Q) + \epsilon} Q$. Similiarly, we have $ Q
\subset_{d_{MT}(P,Q) + \epsilon }P $. Thus, \( d_C(P,Q)\leq
d_{MT}(P,Q) +\epsilon \) for all $\epsilon > 0$ and we are done.
\color{black}
\end{proof}

The following observation is a nice tool to calculate the distance
between two von Neumann subalgebras and will be used ahead on more
than occasion.

\begin{proposition}\label{dMT-SOT}
Let $(\mM, \tau)$ be as in \Cref{dMT-facts}. Then, 
\[
d_{MT}(P,Q) = d_{MT}\Big(P, \overline{Q}^{S.O.T.}\Big) =
d_{MT}\Big(\overline{P}^{S.O.T.}, \overline{Q}^{S.O.T.}\Big) = d_{MT}\Big(P'',
Q''\Big)
\] 
for all $P, Q \in *\text{-}\mathrm{Sub}^u_{\mM}$.
\end{proposition}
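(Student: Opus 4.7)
The plan is to reduce every equality in the proposition to the single assertion
\[
\overline{\widehat{B_1(R)}}^{\|\cdot\|_\tau} = \widehat{B_1\bigl(\overline{R}^{S.O.T.}\bigr)} \qquad (\star)
\]
for each $R \in *\text{-}\mathrm{Sub}^u_\mM$, and then to appeal to the elementary fact that the Hausdorff distance between any two subsets of a metric space coincides with the Hausdorff distance between their closures. Granted $(\star)$, applying it once on each side of the defining formula (\ref{dMT-expresssion}) immediately yields
\[
d_{MT}(P, Q) = d_{MT}\bigl(P, \overline{Q}^{S.O.T.}\bigr) = d_{MT}\bigl(\overline{P}^{S.O.T.}, \overline{Q}^{S.O.T.}\bigr),
\]
and the last equality in the statement is just the von Neumann double commutant theorem $\overline{R}^{S.O.T.} = R''$, valid because each $R \in *\text{-}\mathrm{Sub}^u_\mM$ is a unital $*$-subalgebra of $\mM \subseteq B(L^2(\mM, \tau))$.

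To establish $(\star)$, I would combine Kaplansky's density theorem, which gives that $B_1(R)$ is SOT-dense in $B_1(\overline{R}^{S.O.T.})$, with the standard fact that on the norm-bounded set $B_1(\mM)$ the SOT is metrized by $\|\cdot\|_\tau$. One direction of this equivalence is immediate from $\|y\|_\tau = \|y\widehat{1}\|_2$; the reverse direction uses that for every $y \in \mM$, right multiplication $R_y : \widehat{z} \mapsto \widehat{zy}$ extends to a bounded operator on $L^2(\mM, \tau)$, combined with uniform boundedness of the family $\{L_x : x \in B_1(\mM)\}$ acting on $L^2(\mM, \tau)$ and the density of $\widehat{\mM}$ in $L^2(\mM, \tau)$. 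Together these give $\|\cdot\|_\tau$-density of $\widehat{B_1(R)}$ in $\widehat{B_1(\overline{R}^{S.O.T.})}$; and the latter set is $\|\cdot\|_\tau$-closed in $L^2(\mM, \tau)$ because $\overline{R}^{S.O.T.}$ is itself a von Neumann algebra equipped with the restriction of $\tau$, by \Cref{vNa-ball-complete}. This yields $(\star)$.

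The main (but entirely routine) technical point is the SOT\,$=\|\cdot\|_\tau$ equivalence on $B_1(\mM)$; everything else is a direct invocation of Kaplansky's density theorem, \Cref{vNa-ball-complete}, and the closure-invariance of the Hausdorff distance. No compactness, perturbation, or index-theoretic input is needed — the proof is essentially a clean bookkeeping of topologies on the unit ball, mediated through the embedding $\widehat{\cdot} : \mM \hookrightarrow L^2(\mM, \tau)$.
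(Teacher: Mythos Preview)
Your proposal is correct. The key ingredients---Kaplansky's density theorem, the implication ``SOT convergence $\Rightarrow$ $\|\cdot\|_\tau$-convergence'' on the unit ball, and the double commutant theorem---are exactly those the paper uses, but your organization is different and somewhat cleaner. The paper argues directly with $\varepsilon$'s: it first shows $d_\tau\big(\widehat{a},\widehat{B_1(Q)}\big)=d_\tau\big(\widehat{a},\widehat{B_1(\overline{Q}^{S.O.T.})}\big)$ for each fixed $a\in B_1(P)$ via a Kaplansky approximation, and then runs a second Kaplansky/continuity argument to show the suprema over $B_1(P)$ and $B_1(\overline{P}^{S.O.T.})$ coincide. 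You instead isolate the single structural identity $(\star)$ and then invoke closure-invariance of the Hausdorff distance, which handles both sides symmetrically in one stroke. Your route is slightly more conceptual and avoids repeating the approximation argument; the paper's route is more hands-on but needs only the easy direction of the SOT/$\|\cdot\|_\tau$ comparison (and in fact your argument for $(\star)$ also only needs that one direction together with \Cref{vNa-ball-complete}, so the full metrizability claim you sketch is more than you actually use).
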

\begin{proof}
In order to avoid any possible confusion, for every $x \in \mM$ and $S
\subseteq \mM$, let $d_\tau\big(\widehat{x}, \widehat{S}\big):=\inf \{ \|\widehat{x} -
\widehat{s}\|_\tau : s \in S\}$.

Now, let $P, Q \in *\text{-Sub}^u_{\mM}$.  We first assert that
$d_\tau\Big(\widehat{a}, \widehat{B_1(Q)}\Big) = d_\tau\Big(\widehat{a},
\widehat{B_1(\overline{Q}^{S.O.T.})}\Big)$ for all $a \in B_1(P)$.

Clearly, $d_\tau\Big(\widehat{a}, \widehat{ B_1(\overline{Q}^{S.O.T.}\Big)}\leq
d_\tau\Big(\widehat{a}, \widehat{B_1(Q)}\Big)$ for all $a\in B_1(P)$. For the
reverse inequality, fix an $a \in B_1(P)$ and an $\epsilon>0$. Then,
there exists a $b_{0}\in B_1(\overline{Q}^{S.O.T.})$ such that
\[
d_\tau\Big(\widehat{a},
\widehat{B_1(\overline{Q}^{S.O.T.})}\Big) \leq
\|\widehat{a}-\widehat{b_{0}}\|_{\tau}< d_\tau\Big(\widehat{a},
\widehat{B_1(\overline{Q}^{S.O.T.})}\Big)+ \epsilon.
\]
As $ B_1\Big(\overline{Q}^{S.O.T.}\Big)= \overline{B_1(Q)}^{S.O.T.}$ (by
Kaplansky density theorem), there exists a net
$\{b_{\alpha}\}\subseteq B_1(Q)$ such that
$b_{\alpha}\stackrel{S.O.T.}{\longrightarrow} b_{0}$. Hence, there
exists an $\alpha_{0}$ such that
\[
\|\widehat{a}-\widehat{b}_{\alpha_{0}}\|_{\tau}< d_\tau \Big(\widehat{a},
\widehat{B_1(\overline{Q}^{S.O.T.})}\Big) + \epsilon.
\]
This implies that
\[
d_\tau\big(\widehat{a}, \widehat{B_1(Q)}\big)\leq
\|\widehat{a}-\widehat{b}_{\alpha_{0}}\|_{\tau} <d_\tau \Big(\widehat{a},
\widehat{B_1(\overline{Q}^{S.O.T.})}\Big)+ \epsilon.
\]
As $\epsilon>0$ was arbitrary, we get $d_\tau\Big(\widehat{a},
\widehat{B_1(Q)}\Big)\leq d_\tau\Big(\widehat{a},
\widehat{B_1(\overline{Q}^{S.O.T.})}\Big)$. This proves our assertion.

Then, we get
\[
\beta := \sup_{a\in B_1(P)}d_\tau \big(\widehat{a}, \widehat{B_1(Q)}\big)=
\sup_{a\in B_1(P)}d_\tau \Big(\widehat{a},
\widehat{B_1(\overline{Q}^{S.O.T.})}\Big)\]
\[
\leq \sup_{z\in
  B_1(\overline{P}^{S.O.T.})}d_\tau\Big(\widehat{z},
\widehat{B_1(\overline{Q}^{S.O.T.})}\Big)=: \alpha.
\] 
Further, for any  $\eta>0$, there exists a $ z_{0}\in
B_1\Big(\overline{P}^{S.O.T.}\Big)$ such that
\[
\alpha-\eta<
d_\tau \Big(\widehat{z_{0}}, \widehat{B_1(\overline{Q}^{S.O.T.})}\Big)\leq
\alpha.
\]
Then, by Kaplansky density theorem again, there exists a net
$\{z_{\lambda }\} \subset B_1(P)$ such that $z_{\lambda
}\stackrel{S.O.T.}{\longrightarrow} z_{0}$. In particular,
$\widehat{z_{\lambda}} {\longrightarrow} \widehat{z_{0}}$ with respect
to $\|\cdot\|_\tau$; and, since $d_\tau$ is continuous with respect to
$\|\cdot\|_\tau$, it follows that \( d_\tau \Big(\widehat{{z}_{\lambda }},
\widehat{B_1(\overline{Q}^{S.O.T.})}\Big)\rightarrow
d_\tau\Big(\widehat{z_{0}}, \widehat{B_1(\overline{Q}^{S.O.T.})}\Big).  \)
Thus, there exists a $\lambda_{0}$ such that \( d_\tau
\Big(\widehat{z}_{\lambda_{0}}, \widehat{B_1(\overline{Q}^{S.O.T.})}\Big)>\alpha-
\eta; \) so that, $\beta \geq \alpha-\eta$. Since $\eta > 0$ was
arbitrary, we have $\beta\geq \alpha$. Thus, $\alpha=\beta$, i.e., \(
d_{MT}(P, Q)= d_{MT}\Big({P}, \overline{Q}^{S.O.T.}\Big).  \) By a similar
argument, we conclude that
\[
d_{MT}(P, Q)= d_{MT}\Big(\overline{P}^{S.O.T.}, \overline{Q}^{S.O.T.}\Big).
\]
Rest follows from von Neumann's Double
Commutant theorem.
\end{proof}

\subsection{Comparisons between $d_{KK}$, $d_C$ and $d_{MT}$}

We first present the following comparison between $d_{MT}$ and $d_{KK}$. 
\begin{lemma}\label{dMT-dKK}
  Let $\mM$ be a von Neumann algebra with a faithful normal tracial
  state $\tau$.  Then,
$$d_{MT}(P, Q)\leq d_{KK}(P, Q)$$
  for all $P, Q \in \mathrm{Sub}_{\mM}$.

\end{lemma}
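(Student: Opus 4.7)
The plan is to reduce the inequality to the pointwise comparison $\|\widehat{x}\|_{\tau} \le \|x\|$ valid for every $x \in \mM$, and then propagate this comparison through the Hausdorff-distance construction. The key observation is that since $\tau$ is a state (in particular positive with $\tau(1)=1$), we have $\|\widehat{x}\|_{\tau}^2 = \tau(x^*x) \le \|x^*x\| = \|x\|^2$ for all $x \in \mM$. Consequently, for any $x \in B_1(P)$ and any $y \in B_1(Q)$,
\[
\|\widehat{x} - \widehat{y}\|_{\tau} = \|\widehat{x-y}\|_{\tau} \le \|x - y\|.
\]

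First, I would fix $x \in B_1(P)$ and take the infimum over $y \in B_1(Q)$ of both sides, obtaining
\[
d_\tau\big(\widehat{x}, \widehat{B_1(Q)}\big) \le d\big(x, B_1(Q)\big).
\]
Then I would take the supremum over $x \in B_1(P)$ to get
\[
\sup_{x \in B_1(P)} d_\tau\big(\widehat{x}, \widehat{B_1(Q)}\big) \le \sup_{x \in B_1(P)} d\big(x, B_1(Q)\big) \le d_{KK}(P, Q).
\]
A symmetric argument (swapping the roles of $P$ and $Q$) gives
\[
\sup_{y \in B_1(Q)} d_\tau\big(\widehat{y}, \widehat{B_1(P)}\big) \le d_{KK}(P, Q).
\]
Taking the maximum of these two suprema yields $d_{MT}(P,Q) \le d_{KK}(P,Q)$, as required.

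There is no real obstacle here; the argument is essentially just the observation that the embedding $(\mM, \|\cdot\|) \to (L^2(\mM,\tau), \|\cdot\|_{\tau})$ is norm-decreasing, combined with the elementary fact that a Lipschitz map with constant $\le 1$ contracts Hausdorff distances. The only mild care needed is to ensure we are comparing the same pairs of unit balls on both sides, which is automatic since $\widehat{B_1(P)}$ is defined as the image of $B_1(P)$ under the embedding $x \mapsto \widehat{x}$.
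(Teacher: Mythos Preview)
Your proof is correct and follows essentially the same approach as the paper's own proof: both use the pointwise inequality $\|\widehat{x}-\widehat{y}\|_\tau \leq \|x-y\|$, take the infimum over $B_1(Q)$, then the supremum over $B_1(P)$, and finish by symmetry.
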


\begin{proof} Let $P, Q \in \mathrm{Sub}_{\mM}$ and 
$p \in B_1(P)$. Then, 
\[
d_\tau \Big(\widehat{p}, \widehat{B_1(Q)}\Big)\leq \|\widehat{p}- \widehat{q}\|_{\tau}\leq \| p- q\| 
\]
for all $ q \in B_1(Q)$. Thus, 
\(
d_\tau \Big(\widehat{p}, \widehat{B_1(Q)}\Big) \leq d\big(p , B_1(Q)\big)
\) for all $p \in B_1(P)$. In particular, 
\[
\sup_{ p \in B_1(P )}d_\tau \Big(\widehat{p}, \widehat{B_1(Q)}\Big)\leq \sup_{ p \in B_1(P )}d\big( p , B_1(Q)\big).\]
Similarly, 
\[
\sup_{q \in B_1(Q )}d_\tau \Big(\widehat{q}, \widehat{B_1(P)}\Big)\leq \sup_{q \in B_1(Q))}d\big(q, B_1(P)\big).
\]
Hence, 
\[
d_{MT}(P , Q)\leq d_{KK}(P, Q).
\]
\end{proof}

The following useful observation  is well known to the experts
(see, for instance, \cite[Eqn.(6)]{Ch1}).

\begin{lemma}\label{x-ENx-relation}
Let $(\mM, \tau)$ be as in \Cref{dMT-dKK} and let
 $\mN$ be a von Neumann subalgebra of $\mM$ (with common unit). Then,
\[
 d_\tau \Big(\widehat{x}, \widehat{B_1(\mN)}\Big)= \|\widehat{x}- \widehat{E_{\mN}(x)}\|_{\tau} =  d_\tau \Big(\widehat{x}, L^{2}(\mN)\Big)
\]
for all $x  \in B_1(\mM)$, where $E_{\mN}$ denotes the unique $\tau$-preserving normal conditional expectation from $\mM$ onto $\mN$.
\end{lemma}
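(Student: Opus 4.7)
The plan is to leverage the well-known fact that the $\tau$-preserving normal conditional expectation $E_\mN$ extends to the orthogonal projection $e_\mN : L^2(\mM,\tau) \to L^2(\mN,\tau)$, from which all three equalities will follow with minimal effort.

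First, I would verify that $\widehat{E_\mN(x)} = e_\mN(\widehat{x})$ for every $x \in \mM$. This is standard: for any $y \in \mN$, the trace property and the $\mN$-bimodularity of $E_\mN$ give
\[
\langle \widehat{x} - \widehat{E_\mN(x)},\, \widehat{y} \rangle_\tau = \tau\big(y^*(x - E_\mN(x))\big) = \tau\big(E_\mN(y^*(x-E_\mN(x)))\big) = \tau\big(y^*(E_\mN(x)-E_\mN(x))\big) = 0,
\]
so $\widehat{x} - \widehat{E_\mN(x)} \perp L^2(\mN,\tau)$, while $\widehat{E_\mN(x)} \in L^2(\mN,\tau)$. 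By uniqueness of orthogonal projection onto a closed subspace of a Hilbert space, this yields
\[
\big\|\widehat{x} - \widehat{E_\mN(x)}\big\|_\tau = d_\tau\big(\widehat{x},\, L^2(\mN)\big).
\]

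Second, I would recall that $E_\mN$ is a unital completely positive map, hence contractive in operator norm, so $x \in B_1(\mM)$ implies $E_\mN(x) \in B_1(\mN)$, and consequently $\widehat{E_\mN(x)} \in \widehat{B_1(\mN)} \subseteq L^2(\mN,\tau)$. Combining with the previous step, we obtain the chain
\[
d_\tau\big(\widehat{x},\, L^2(\mN)\big) \;\leq\; d_\tau\big(\widehat{x},\, \widehat{B_1(\mN)}\big) \;\leq\; \big\|\widehat{x} - \widehat{E_\mN(x)}\big\|_\tau \;=\; d_\tau\big(\widehat{x},\, L^2(\mN)\big),
\]
so all three quantities coincide, as desired.

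No genuine obstacle arises here; the argument is essentially a textbook computation. The only subtlety worth flagging is the need for $E_\mN$ to be a norm contraction (so that $B_1(\mM)$ maps into $B_1(\mN)$); this is immediate from the general theory of conditional expectations on $C^*$-algebras and is precisely what allows the passage from the distance to the full subspace $L^2(\mN)$ to the distance to the closed unit ball $\widehat{B_1(\mN)}$.
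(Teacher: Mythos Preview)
Your proof is correct and is the standard argument. The paper itself does not give a proof of this lemma; it simply records the statement as well known and refers the reader to \cite[Eqn.~(6)]{Ch1}. Your write-up supplies exactly the expected details: the identification of $\widehat{E_{\mN}(x)}$ with the orthogonal projection of $\widehat{x}$ onto $L^2(\mN,\tau)$, and the use of the contractivity of $E_{\mN}$ to ensure $E_{\mN}(x)\in B_1(\mN)$, which is what makes the sandwich inequality close up.
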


We are now all set to show that the Mashood-Taylor distance agrees with
the Christensen's distance on $*\text{-Sub}_M^u$.
\begin{proposition}\label{dMT=dC}
Let $(\mM, \tau)$ be as in \Cref{x-ENx-relation}. Then,   
$$d_C(P, Q)= d_{MT}(P, Q)$$
for all $P, Q \in *\text{-}\mathrm{Sub}_{\mM}^u$.
\end{proposition}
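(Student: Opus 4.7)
The plan is to combine \Cref{dMT-facts}(3), which already gives $d_C(P,Q)\leq d_{MT}(P,Q)$, with the reverse inequality $d_{MT}(P,Q)\leq d_C(P,Q)$. The obstacle is the apparent asymmetry in the definitions: $d_C$ only asks for approximators $y\in Q$ (unrestricted in norm), whereas $d_{MT}$ demands approximators lying in the unit ball $B_1(Q)$. The key tool to bridge this gap is \Cref{x-ENx-relation} together with Kaplansky density.

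Concretely, I would fix $\gamma>d_C(P,Q)$ and any $x\in B_1(P)$. By definition of $d_C$, there exists $y\in Q$ with $\|\widehat{x}-\widehat{y}\|_{\tau}\leq\gamma$. Since $y\in Q\subseteq Q''$, we have $\widehat{y}\in L^2(Q'',\tau)$, and so
\[
\|\widehat{x}-\widehat{E_{Q''}(x)}\|_{\tau}=d_\tau\big(\widehat{x},L^2(Q'')\big)\leq\|\widehat{x}-\widehat{y}\|_{\tau}\leq\gamma,
\]
by \Cref{x-ENx-relation} applied to the von Neumann subalgebra $Q''\subseteq\mM$ (both containing $1_\mM$, since $Q\in *\text{-}\mathrm{Sub}^u_\mM$). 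Because $E_{Q''}$ is a contractive conditional expectation and $x\in B_1(\mM)$, we have $E_{Q''}(x)\in B_1(Q'')$.

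Now by the Kaplansky density theorem, $B_1(Q'')=\overline{B_1(Q)}^{S.O.T.}$, so there is a net $\{y_\alpha\}\subseteq B_1(Q)$ with $y_\alpha\xrightarrow{S.O.T.} E_{Q''}(x)$. Evaluating at $\widehat{1}\in L^2(\mM,\tau)$ gives $\widehat{y_\alpha}\to\widehat{E_{Q''}(x)}$ in $\|\cdot\|_{\tau}$. Hence for any $\epsilon>0$ one can choose $y'\in B_1(Q)$ with
\[
\|\widehat{x}-\widehat{y'}\|_{\tau}\leq\|\widehat{x}-\widehat{E_{Q''}(x)}\|_{\tau}+\|\widehat{E_{Q''}(x)}-\widehat{y'}\|_{\tau}<\gamma+\epsilon,
\]
showing $d_\tau(\widehat{x},\widehat{B_1(Q)})\leq\gamma$. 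Taking supremum over $x\in B_1(P)$ and then applying the same argument with the roles of $P$ and $Q$ swapped yields $d_{MT}(P,Q)\leq\gamma$. Letting $\gamma\searrow d_C(P,Q)$ concludes $d_{MT}(P,Q)\leq d_C(P,Q)$, and combined with \Cref{dMT-facts}(3) gives the desired equality.

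The hard part is the one already exposed: showing that the freedom to approximate in $Q$ (as required by $d_C$) forces the existence of approximators in $B_1(Q)$ (as required by $d_{MT}$). This is exactly where Kaplansky density and the contractivity of $E_{Q''}$ combine, via \Cref{x-ENx-relation}, to convert an arbitrary $\|\cdot\|_{\tau}$-approximator into one in the unit ball without enlarging the distance.
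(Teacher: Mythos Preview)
Your argument is correct and uses the same essential ingredients as the paper's proof: the inequality $d_C\leq d_{MT}$ from \Cref{dMT-facts}(3), \Cref{x-ENx-relation} to replace an arbitrary approximator $y\in Q$ by $E_{Q''}(x)\in B_1(Q'')$, and Kaplansky density. The organization differs slightly: the paper first invokes \Cref{dMT-SOT} to reduce to showing $d_C(P,Q)\geq d_{MT}(\widetilde P,\widetilde Q)$, and then applies Kaplansky density on the $P$-side (approximating $p\in B_1(\widetilde P)$ by elements of $B_1(P)$), whereas you bypass \Cref{dMT-SOT} entirely and apply Kaplansky on the $Q$-side (approximating $E_{Q''}(x)\in B_1(Q'')$ by elements of $B_1(Q)$) to bound $d_{MT}(P,Q)$ directly. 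Your route is marginally more economical, since it does not rely on \Cref{dMT-SOT} as an intermediate step; the paper's route, on the other hand, makes the equality $d_{MT}(P,Q)=d_{MT}(P'',Q'')$ explicit along the way.
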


\begin{proof}
  Let $P, Q \in *\text{-}\mathrm{Sub}_{\mM}^u$ and consider
  $\widetilde{P}:= \overline{P}^{S.O.T.}$ and $\widetilde{Q}:=
  \overline{Q}^{S.O.T.}$.  In view of \Cref{dMT-facts} and
  \Cref{dMT-SOT}, it suffices to show that $d_C(P, Q) \geq
  d_{MT}\big(\widetilde{P}, \widetilde{Q}\big)$.

We first assert that  $d_\tau \Big(\widehat{p},
\widehat{(B_1(\widetilde{Q}))}\Big)= d_\tau \Big(\widehat{p},
L^{2}(\widetilde{Q})\Big)= d_\tau \Big(\widehat{p}, \widehat{Q}\Big)$ for all $p \in
B_1(P)$.

Let $p \in B_1(P)$. Then,  by \Cref{x-ENx-relation}, 
\[
d_\tau \left(\widehat{p}, \widehat{B_1(\widetilde{Q})}\right)= \| \widehat{p} - \widehat{E_{\widetilde{Q}}(p)}\|_\tau = d_\tau \left(\widehat{p},
L^{2}(\widetilde{Q})\right).
\]
Thus, as $\widehat{B_1(Q)}\subset \widehat{Q} \subset
L^2(\widetilde{Q})$, we see that
\[
d_\tau \left(\widehat{p}, \widehat{B_1(\widetilde{Q})}\right)=
d_\tau \left(\widehat{p}, L^{2}(\widetilde{Q})\right)\leq
d_\tau \left(\widehat{p}, \widehat{Q}\right) \leq d_\tau \left(\widehat{p},
\widehat{B_1(Q)}\right)= d_\tau \left(\widehat{p},
\widehat{B_1(\widetilde{Q})}\right)
\]
for all $p \in B_1(P)$. Hence, our first assertion is true.

Now, suppose that $P\subset_{\gamma} Q$ and $Q\subset_{\gamma} P$ for
some $\gamma > 0$. We then assert that
\[
d_\tau \left(\widehat{p}, \widehat{B_1(\widetilde{Q})}\right) \leq \gamma
\text{ for all } p\in B_1(\widetilde{P}).
\]
So, let $p\in B_1(\widetilde{P})$ and,  by Kaplansky density theorem, fix
a net $\{p_{\alpha}\}\subset B_1(P)$ such that
$p_{\alpha}\stackrel{S.O.T.}{\longrightarrow} p$ in $B(L^2(\mM,
\tau))$. Since $P \subset_\gamma Q$, for each
$\alpha$, there exists a $ q_{\alpha}\in Q$ such that $\|\widehat{p}_{\alpha}-
\widehat{q}_{\alpha}\|_{\tau}\leq \gamma.$ So,
\[
\gamma \geq \|\widehat{p}_{\alpha}- \widehat{q}_{\alpha}\|_{\tau}\geq d_\tau
(\widehat{p}_{\alpha}, \widehat{Q})= d_\tau \left(\widehat{p}_{\alpha},
\widehat{B_1(\widetilde{Q})}\right)= \|\widehat{p}_{\alpha}-
\widehat{E_{\widetilde{Q}}(p_{\alpha})} \|_{\tau}
\]
for all $\alpha$, where $E_{\widetilde{Q}}$ is the unique $\tau$-preserving normal
conditional expectation from $\mM$ onto $\widetilde{Q}$.
Thus, 
\[
d_\tau \left(\widehat{p}, \widehat{B_1(\widetilde{Q})}\right)  \leq  \inf_{\alpha} \|\widehat{p}- \widehat{E_{\widetilde{Q}}(p_{\alpha})} \|_{\tau}
 \leq  \inf_{\alpha}\| \widehat{p} - \widehat{p}_{\alpha}\|_{\tau}+ \inf_{\alpha}\|\widehat{p}_{\alpha}- \widehat{E_{\widetilde{Q}}(p_{\alpha})} \|_{\tau}
 \leq  \gamma,
\]
where the last inequality holds because $\|\widehat{p}_{\alpha}-
\widehat{p}\|_{\tau} \rightarrow 0\,$ (as $p_{\alpha} \rightarrow p$ in
S.O.T.) and  $\| \widehat{p_{\alpha}} -
\widehat{E_{\widetilde{Q}}(p_{\alpha})}\|_\tau \leq \gamma$ for all
$\alpha$. Thus, our second assertion is also true.

As a consequence, \( \sup_{p\in B_1(\widetilde{P})}d_\tau \left(\widehat{p},
\widehat{B_1(\widetilde{Q})}\right)\leq \gamma.  \) Likewise, \(
\sup_{q\in B_1(\widetilde{Q})}d_\tau \left(\widehat{q},
\widehat{B_1(\widetilde{P})}\right)\leq \gamma \) as well.  Hence,
$d_{MT}(\widetilde{P}, \widetilde{Q})\leq \gamma$, which then implies that
\[
d_{MT}\big(\widetilde{P}, \widetilde{Q}\big) \leq d_C(P, Q),
\]
and we are done.
\end{proof}

In view of \Cref{dMT-dKK}, we also have the following:
\begin{cor}\label{dC-dKK}
Let $(\mM, \tau)$ be as in \Cref{x-ENx-relation}. Then,   
$$
d_C(P, Q)\leq d_{KK}(P, Q)$$
for all
$P, Q \in \mathrm{Sub}_{\mM}$.
\end{cor}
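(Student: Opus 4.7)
The plan is to simply chain two inequalities already established in the excerpt. By \Cref{dMT-facts}(3), for any $P, Q \in \mathrm{Sub}_{\mM}$ we have $d_C(P,Q) \leq d_{MT}(P,Q)$, and by \Cref{dMT-dKK} we have $d_{MT}(P,Q) \leq d_{KK}(P,Q)$. Composing these two inequalities immediately yields $d_C(P,Q) \leq d_{KK}(P,Q)$, which is precisely the statement of \Cref{dC-dKK}.

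There is essentially no obstacle here, since both ingredients are stated and proved just above. The only thing worth double-checking is that the two inequalities apply to the full class $\mathrm{Sub}_{\mM}$ (not merely to unital $*$-subalgebras or to von Neumann subalgebras), so that the hypothesis of the corollary matches. This is indeed the case: \Cref{dMT-facts}(3) is stated for arbitrary $P, Q \in \mathrm{Sub}_{\mM}$, and \Cref{dMT-dKK} likewise applies to arbitrary $P, Q \in \mathrm{Sub}_{\mM}$. Thus the proof is just the one-line combination, and no further argument is needed.
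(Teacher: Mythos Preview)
Your proposal is correct and matches the paper's own argument: the corollary is stated immediately after \Cref{dMT=dC} with the remark ``In view of \Cref{dMT-dKK}, we also have the following,'' and the intended proof is precisely the chain $d_C \leq d_{MT} \leq d_{KK}$ using \Cref{dMT-facts}(3) and \Cref{dMT-dKK}, both of which hold on all of $\mathrm{Sub}_{\mM}$.
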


\section{Kadison-Kastler and Christensen distance between subalgebras corresponding to subgroups}\label{calculations}

\subsection{Distances between crossed-product subalgebras associated to subgroups}
Let $G$ be a discrete group acting on a $C^*$-algebra $\mA$ via the
map $\alpha: G \rightarrow \Aut(\mA) $. Consider the space $C_{c}(G,
\mA)$ consisting of compactly supported $\mA$-valued functions on $G$,
which can be identified naturally with the vector space
$\{\sum_{\text{finite}}a_{g}g\,\,:\,\,a_{g}\in \mA\,,\,g\in G\}$
consisting of formal finite sums. Further, $C_{c}(G, \mA)$ is a
$*$-algebra with respect to the (so called, twisted) multiplication
given by the convolution operation
\[
\Big(\sum_{s\in I}a_{s}s\Big)\Big(\sum_{t\in J}b_{t}t\Big)= \sum_{s\in
  I, t\in J}a_{s}\alpha_{s}(b_{t})st
\]
and involution given by 
\[
\Big(\sum_{s\in I}a_{s}s\Big)^{*}= \sum_{s\in I}\alpha_{s^{-1}}(a_{s}^{*})s^{-1}
\]
for any two finite sets $I$ and $J$ in $G$. The reduced crossed
product $\mA \rtimes^r_{\alpha}G$ and the universal crossed product
$\mA \rtimes^u_{\alpha} G$ are defined, respectively, as the completions of
$C_{c}(G, \mA)$ with respect to the reduced norm and the universal norm
on $C_c(G, \mA)$, as described below:

The reduced norm is given by 
\[
\Big\|\sum_{\text{finite}}a_{g}g\Big\|_r:= \Big\|\sum_{\text{finite}}\pi(a_g)(1\otimes
\lambda_g)\Big\|_{B(H\otimes l^2(G))},
\]  
where $\mA \subset B(H) $ is a (equivalently, any) fixed faithful
representation of $\mA$, $\lambda: G\rightarrow B(l^2(G))$ is the left
regular representation and $\pi: \mA \rightarrow B(H\otimes l^2(G))$ is
the representation satisfying $\pi(a)(\xi \otimes \delta_g)=
\alpha_{g^{-1}}(a)(\xi)\otimes \delta_g$ for all $\xi \in H$ and $g\in
G$.

The universal norm is given by 
\[
\|x\|_u:= \sup_{\pi}\|\pi(x)\|_{B(H_{\pi})}\, \mbox{\,\,for\,\,}\,x\in C_c(G, \mA),
\]
where the supremum runs over all (cyclic) $*$-homomorphisms $\pi:
C_c(G, \mA)\rightarrow B(H_{\pi})$. Note that 
$
\|x\|_{r}\leq \|x\|_u \ \text{ for all}\  x\in C_c(G, \mA).
$

We refer the reader to \cite{BO} for more on crossed
products.

\subsubsection{Distances between subalgebras of reduced crossed product}
First,  we  gather some relevant facts related to the
reduced crossed product construction.
\begin{remark}\label{crossed-product-facts}
Let $G$ and $\mA$ be as above and $H$ be a subgroup of $G$. 
\begin{enumerate}
\item The canonical injective $*$-homomorphism 
\[
C_c(H, \mA) \ni \sum_{\mathrm{finite}} a_h h \mapsto
\sum_{\mathrm{finite}} a_h h\in C_c(G, \mA)
\]
 extends to an injective $*$-homomorphism from $\mA \rtimes^r_{\alpha} H
 $ into $\mA \rtimes^r_{\alpha} G$. Hence, we can consider
 $\mA \rtimes^r_{\alpha} H$ as $C^*$-subalgebra of
 $\mA \rtimes^r_{\alpha}G$. (\cite[Remark 3.2]{MK})
\item There exists a faithful conditional expectation $E_H: \mA
  \rtimes_{\alpha}^rG \rightarrow \mA \rtimes_{\alpha}^r H$ satisfying
  $E_H(\sum_{g \in I} a_{g}g) = \sum_{h\in I \cap H} a_h h$ for all $
  \sum_{g \in I}a_{g}g\in C_c(G, \mA)$ (\cite[Remark 3.2]{MK}); and,
  if $[G:H] < \infty$, then $E_H$ has finite (Watatani) index with a
  quasi-basis given by any set of left coset representatives $\{g_i: 1
  \leq i \leq [G:H]\}$ of $H$ in $G$.

\item Thus, as in \Cref{index-basic-construction}, the vector
  space $\mA\rtimes^r_{\alpha}G$ is a natural left $(\mA
  \rtimes^r_{\alpha} H)$-module via multiplication from left. Hence,
  following \cite{Wat}, we consider $\mA \rtimes^r_{\alpha}G$ as a
  pre-Hilbert (left) $(\mA \rtimes^r_{\alpha} H)$-module with respect
  to the $(\mA \rtimes^r_{\alpha} H)$-valued inner product given by
\[
\langle x , y \rangle = E_H(x^*y), \text{ for } x, y \in \mA \rtimes^r_{\alpha}G.
\]
Further, in order to distinguish the elements of the $C^*$-algebra
$\mA \rtimes^r_{\alpha}G$ and the pre-Hilbert $(\mA \rtimes^r_{\alpha}
H)$-module $\mA \rtimes^r_{\alpha}G$, following \cite{Wat}, we
consider the identity map $\eta_H : \mA \rtimes^r_{\alpha}G
\rightarrow \mA \rtimes^r_{\alpha}G$, where we consider the co-domain
as the pre-Hilbert $(\mA \rtimes^r_{\alpha}H)$-module.  Since $E_H$ is
a contraction, we have
 \begin{equation}\label{r-eta-relation}
\|\eta_H(x)\|  = \|E_H(x^*x)\|^{1/2}_r \leq \|x\|_r
  \end{equation}
  for all $ x \in \mA \rtimes^r_{\alpha} G$.
\item Note that, for two subgroups $H \subsetneq K$ of a discrete
  group $ G$, $\|\eta_H(x)\|$ need not be equal to $\|\eta_K(x)\|$ for
  every $x \in \mA \rtimes^r_{\alpha} G$.
  
  For instance, consider the trivial action of the finite permutation
  group $G=S_{3}$ on the $C^*$-algebra $\mathbb{C}$. Let $H=\{e\}$,
  $K=A_{3}$ and $x=a_{1}(123)+a_{2}(132)\in C_{r}^*(G) = \C[G]$ with
  $a_1,a_2\neq 0$. Then, $x^*x= (|a_{1}|^2+|a_{2}|^2)e+
  \bar{a_1}a_2(123)+\bar{a_2}a_1(132)$ and 
 \begin{eqnarray*}
   \|\eta_K(x)\|^{2} &=& \|E_K(x^*x)\|_r\\ & = &
   \Big\|(|a_{1}|^2+|a_{2}|^2){e}+ (\bar{a_1}a_2)\,
         {(123)}+(\bar{a_2}a_1)\, {(132)}\Big\|_r \\ & = &
         \Big\|(|a_{1}|^2+|a_{2}|^2)\lambda_{e}+ (\bar{a_1}a_2)\,
         \lambda_{(123)}+(\bar{a_2}a_1)\,
         \lambda_{(132)}\Big\|_{B(\ell^2(S_3))}\\ &\geq &
         \Big\|\Big((|a_{1}|^2+|a_{2}|^2)\lambda_{e}+ (\bar{a_1}a_2)\,
         \lambda_{(123)}+(\bar{a_2}a_1)\,
         \lambda_{(132)}\Big)(\delta_{e})\Big\|_{\ell^2(S_3)}\\ &=&
         \Big\|(|a_{1}|^2+|a_{2}|^2)\delta_{e}+
         (\bar{a_1}a_2)\,\delta_{(123)}+(\bar{a_2}a_1)\,\delta_{(132)}
         \Big\|_{\ell^2(S_3)} \\ & =& \Big((|a_{1}|^2+|a_{2}|^2)^2+
         |\bar{a_1}a_2|^2+ |\bar{a_2}a_1|^2\Big)^{\frac{1}{2}}\\ &>&
         \Big((|a_{1}|^2+|a_{2}|^2)^2\Big)^{\frac{1}{2}}\\ &=&
         (|a_{1}|^2+|a_{2}|^2) \\ & = & \|\eta_H(x)\|^2 .
\end{eqnarray*}    
\item We shall denote $E_{\{e\}}$ (resp., $\eta_{\{e\}}$) simply by
  $E$ (resp., $\eta$).
\end{enumerate}
\end{remark}

The following is an elementary and useful observation. A more general
version of it was proved by Phillips (see \cite[Proposition 9.16(3)]{N}).
\begin{lemma}\label{ag-inequality}
Let $G$ be a discrete group acting on a $C^*$-algebra $\mA$. If $a=\sum_{g \in I}
a_{g}g\in C_c(G, \mA)$, then
\[
\|a_g\|^2\leq \|E(a^*a)\|_r=\|\eta(a)\|^2 \text{ for all }\,g\in I.
\]
\end{lemma}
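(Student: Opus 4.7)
The plan is to compute $E(a^*a)$ explicitly, observe that it is a sum of positive elements in $\mA$, and then use monotonicity of the norm on positive elements to extract the bound. The equality $\|E(a^*a)\|_r = \|\eta(a)\|^2$ is just the definition of $\|\eta(a)\|$ (see (\ref{r-eta-relation}) with $H = \{e\}$), so only the inequality $\|a_g\|^2 \le \|E(a^*a)\|_r$ requires work.

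First I would write $a^* = \sum_{g \in I} \alpha_{g^{-1}}(a_g^*) g^{-1}$ using the involution on $C_c(G,\mA)$, and then expand
\[
a^* a = \sum_{g,h \in I} \alpha_{g^{-1}}(a_g^*)\,\alpha_{g^{-1}}(a_h)\, g^{-1}h
= \sum_{g,h \in I} \alpha_{g^{-1}}(a_g^* a_h)\, g^{-1}h
\]
using the twisted multiplication. The conditional expectation $E = E_{\{e\}}$ sends this to the coefficient of $e$, which collects exactly the diagonal terms $g = h$:
\[
E(a^*a) = \sum_{g \in I} \alpha_{g^{-1}}(a_g^* a_g).
\]

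Next I would observe that each summand $\alpha_{g^{-1}}(a_g^* a_g)$ is a positive element of $\mA$ (since $\alpha_{g^{-1}}$ is a $*$-automorphism of $\mA$), hence $E(a^*a)$ is a sum of positive elements. Fix any $g_0 \in I$. By monotonicity of the norm on positive elements in the $C^*$-algebra $\mA$, we have
\[
\|E(a^*a)\|_\mA \ge \|\alpha_{g_0^{-1}}(a_{g_0}^* a_{g_0})\|_\mA = \|a_{g_0}^* a_{g_0}\|_\mA = \|a_{g_0}\|^2.
\]
Since the inclusion $\mA \hookrightarrow \mA \rtimes^r_\alpha G$ is isometric (it is a $C^*$-embedding), the left-hand side equals $\|E(a^*a)\|_r$. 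Combining with the definition of $\eta$ gives the required chain $\|a_{g_0}\|^2 \le \|E(a^*a)\|_r = \|\eta(a)\|^2$.

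There is essentially no obstacle here: the only minor point to double-check is that the reduced-norm on the copy of $\mA$ inside $\mA \rtimes^r_\alpha G$ coincides with the original $C^*$-norm on $\mA$ (so that $\|E(a^*a)\|_r$ agrees with the $\mA$-norm used when applying monotonicity). This is standard for crossed products, and follows from faithfulness of the representation $\pi$ defining the reduced norm. Everything else is elementary $C^*$-algebra: compute, recognize a positive sum, apply monotonicity.
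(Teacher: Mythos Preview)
Your proof is correct. The paper itself does not give a proof of this lemma at all; it merely states the result and points to Phillips \cite[Proposition 9.16(3)]{N} for a more general version. Your explicit computation of $E(a^*a) = \sum_{g\in I}\alpha_{g^{-1}}(a_g^*a_g)$ followed by the monotonicity argument $0\le \alpha_{g_0^{-1}}(a_{g_0}^*a_{g_0}) \le E(a^*a)$ is exactly the natural elementary argument one would expect, and the minor point you flag (that $\mA \hookrightarrow \mA\rtimes^r_\alpha G$ is isometric) is indeed standard and routine.
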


\begin{proposition}\label{Cc-HA-Cc-KA-distance}\label{d0-distance}
Let $G$, $\mA$ be as in \Cref{ag-inequality} and let  $H$ and $K$ be two
distinct subgroups of $G$. Then,
  \[
  d_{KK}(C_c(H, \mA),C_c(K, \mA))=1 = d_0(C_c(H, \mA),C_c(K, \mA))
  \]
in $\mA \rtimes^r_{\alpha} G$.
\end{proposition}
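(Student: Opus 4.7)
The plan is to exhibit a single element of $B_1\big(C_c(H,\mA)\big)$ whose distance to every element of $C_c(K,\mA)$ is already at least $1$; together with the upper bound $d_{KK},d_0\le 1$ coming from \Cref{basic-facts}(1) and \Cref{d0-basics}(1), this will force both distances to equal $1$.

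Since $H\neq K$, we may assume, without loss of generality, that there exists $g\in H\setminus K$. Take $x:=1_{\mA}\, g\in C_c(H,\mA)$ (the hypothesis in \Cref{ag-inequality} is only on $C_c(G,\mA)$, so the unitality of $\mA$ is present in the set-up; if one wants to avoid it, any contractive positive element of $\mA$ works identically). First I would verify that $\|x\|_r = 1$: since $g$ acts as a unitary on $H\otimes\ell^2(G)$ in the regular covariant representation, $\|x\|_r=\|1_\mA\cdot\lambda_g\|=1$, so $x\in B_1\big(C_c(H,\mA)\big)$.

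Next, for an arbitrary $y=\sum_{k\in J}b_k k\in C_c(K,\mA)$, consider $x-y\in C_c(G,\mA)$. Because $g\notin K$, the coefficient of $g$ in $x-y$ is precisely $1_{\mA}$. Applying \Cref{ag-inequality} to the element $x-y$ and the index $g$ yields
\[
1=\|1_{\mA}\|^{2}\;\le\;\|E\bigl((x-y)^*(x-y)\bigr)\|_r\;=\;\|\eta(x-y)\|^{2}\;\le\;\|x-y\|_r^{\,2}.
\]
Thus $\|x-y\|_r\ge 1$ for every $y\in C_c(K,\mA)$, and in particular for every $y\in B_1\big(C_c(K,\mA)\big)$. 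This gives both $d\bigl(x, B_1(C_c(K,\mA))\bigr)\ge 1$ and $d\bigl(x, C_c(K,\mA)\bigr)\ge 1$.

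From the first inequality, $\sup_{a\in B_1(C_c(H,\mA))}d\bigl(a, B_1(C_c(K,\mA))\bigr)\ge 1$, so $d_{KK}\ge 1$; combined with $d_{KK}\le 1$ from \Cref{basic-facts}(1), we obtain $d_{KK}\bigl(C_c(H,\mA),C_c(K,\mA)\bigr)=1$. From the second inequality, for every $\gamma<1$ the inclusion $C_c(H,\mA)\subseteq_{\gamma}C_c(K,\mA)$ fails (with the witness $x$), so $d_0\ge 1$; combined with $d_0\le 1$ from \Cref{d0-basics}(1), we conclude $d_0\bigl(C_c(H,\mA),C_c(K,\mA)\bigr)=1$ as well. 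The only non-routine input is the coefficient-wise lower bound \Cref{ag-inequality}, which has already been recorded; beyond that, the argument is essentially symbolic, so I expect no genuine obstacle.
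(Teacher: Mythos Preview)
Your argument is correct and is essentially the paper's own proof: pick $g\in H\setminus K$, use the coefficient estimate of \Cref{ag-inequality} together with $\|\eta(\cdot)\|\le\|\cdot\|_r$ to see that $\|g-y\|_r\ge 1$ for every $y\in C_c(K,\mA)$, and combine with the trivial upper bounds. The paper streamlines slightly by invoking $d_0\le d_{KK}$ (\Cref{d0-basics}(3)) so that only $d_0\ge 1$ needs to be checked, whereas you verify $d_{KK}\ge 1$ and $d_0\ge 1$ separately; this is purely cosmetic. One small imprecision in your parenthetical remark: in the non-unital case ``any contractive positive element'' is not enough---you need an element of norm exactly $1$ so that the coefficient bound from \Cref{ag-inequality} yields $\ge 1$; any norm-one $a\in\mA$ will do.
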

\begin{proof}
Note that, $d_{KK}(C_c(H, \mA),C_c(K, \mA))\leq 1$, by definition and
$d_{0}(C_c(H, \mA), C_c(K, \mA)) \leq d_{KK}(C_c(H, \mA),C_c(K,
\mA))$, by \Cref{d0-basics}. So, it just remains  to show that
\[ d_{0}(C_c(H, \mA),C_c(K, \mA)) \geq 1.
\]
Since $H$ and $K$ are distinct, either $H \neq H \cap K$
or $K \neq H \cap K$.  Without loss of generality, we can assume that
$H\neq H\cap K$. Then, in view of
(\ref{r-eta-relation}) and \Cref{ag-inequality}, we observe that
\[
\| h - x\|_r \geq \|\eta(h - x)\| \geq 1 >\gamma
\]
for all
$h \in H \setminus H \cap K$,  $x \in B_1(C_c(K,
\mA))$ and for every $ 0 < \gamma < 1$. Thus,
  if $C_c(H, \mA) \subseteq_{\gamma} C_c(K, \mA)$ for some $\gamma >
  0$, then $\gamma \geq 1$. So, by the definition of $d_0$, we must have
  \[
  d_{0}(C_c(H, \mA), C_c(K, \mA)) \geq 1.
  \]
\end{proof}

\subsubsection{Distances between subalgebras of universal crossed product}
 
\begin{remark}
Let $G$, $\mA$ and $\alpha: G\rightarrow \mathrm{Aut}(\mA)$ be as in the
preceding subsection and $H$ be a subgroup of $G$. Then, the canonical
injective $*$-homomorphism
\[
C_c(H, \mA) \ni \sum_{\mathrm{finite}} a_h h \mapsto \sum_{\mathrm{finite}} a_h h\in C_c(G, \mA)
\]
 extends to an injective $*$-homomorphism from $\mA \rtimes^u_{\alpha} H
 $ into $\mA \rtimes^u_{\alpha} G$. Hence, we can consider
 $\mA \rtimes^u_{\alpha} H$ as $C^*$-subalgebra of
 $\mA \rtimes^u_{\alpha}G$. (\cite[Proposition 3.1]{MK})
 \end{remark}

Since $\|x\|_u \geq \|x\|_r$ on $C_c(G, \mA)$, the following is immediate from the proof of \Cref{Cc-HA-Cc-KA-distance}.
\begin{proposition}\label{Cc-HA-Cc-KA-distance-universal}\label{d0-distance-universal}
Let $G$, $\mA$ be as in \Cref{ag-inequality} and let $H$ and $K$ be
two distinct subgroups of $G$. Then,
  \[
  d_{KK}(C_c(H, \mA),C_c(K, \mA))=1 =   d_{0}(C_c(H, \mA),C_c(K, \mA))
  \]
in $\mA \rtimes^u_{\alpha} G$.
\end{proposition}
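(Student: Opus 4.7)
The plan is to mimic the proof of \Cref{Cc-HA-Cc-KA-distance} almost verbatim, the only new ingredient being the pointwise inequality $\|x\|_u \geq \|x\|_r$ on $C_c(G, \mA)$, which the author has already flagged. The bounds $d_0 \leq d_{KK} \leq 1$ come for free from the definitions together with \Cref{d0-basics}(3), so the whole task reduces to establishing the matching lower bound $d_0\bigl(C_c(H, \mA), C_c(K, \mA)\bigr) \geq 1$.

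As in the reduced case, since $H \neq K$, I would assume without loss of generality that $H \setminus (H \cap K) \neq \emptyset$ and pick some $h \in H \setminus (H \cap K)$. Since $h$ embeds as a unitary in $\mA \rtimes^u_\alpha H$, it lies in $B_1(C_c(H, \mA))$ measured with respect to $\|\cdot\|_u$. For any $x \in C_c(K, \mA)$ with $\|x\|_u \leq 1$, the bound $\|x\|_r \leq \|x\|_u \leq 1$ places $x$ in the reduced unit ball as well, so the key step of \Cref{Cc-HA-Cc-KA-distance} transfers directly: the coefficient of $h$ in the expansion of $h - x$ equals $1_\mA$ (since $x$ is supported on $K$ and $h \notin K$), and \Cref{ag-inequality} combined with \eqref{r-eta-relation} yields
\[
\|h - x\|_u \;\geq\; \|h - x\|_r \;\geq\; \|\eta(h - x)\| \;\geq\; \|1_\mA\| \;=\; 1.
\]
Taking the infimum over such $x$ gives $d\bigl(h, B_1(C_c(K, \mA))\bigr) \geq 1$, which forces $d_0 \geq 1$ by definition of the Christensen distance.

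There is essentially no obstacle here: the entire argument is a transfer of the reduced-case estimates via the single dominance $\|\cdot\|_u \geq \|\cdot\|_r$, which nests the universal unit ball inside the reduced unit ball on $C_c(G, \mA)$ and preserves every inequality used previously. The only small point worth verifying is that $\|h\|_u = 1$, which is standard since canonical group elements map to unitaries under every $*$-representation of $C_c(G, \mA)$, so that the universal norm on $h$ reduces to the supremum over unitary images, each of norm $1$.
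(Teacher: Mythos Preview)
Your proposal is correct and matches the paper's one-line proof, which simply notes that $\|\cdot\|_u \geq \|\cdot\|_r$ on $C_c(G,\mA)$ and declares the result immediate from the reduced case. One minor point: the definition of $d_0$ lets the approximant $x$ range over all of $C_c(K,\mA)$, not just its unit ball, but your chain $\|h-x\|_u \geq \|h-x\|_r \geq \|\eta(h-x)\| \geq 1$ holds for every $x \in C_c(K,\mA)$ regardless of norm (the restriction $\|x\|_u \leq 1$ is never actually used), so this is harmless---and the same shortcut appears in the paper's own proof of \Cref{Cc-HA-Cc-KA-distance}.
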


In view of \Cref{dense-distance}, its analogue in \Cref{d0-basics} and
the preceding two propositions, we obtain:
\begin{cor}\label{distances-crossed-product-subalgebras}
  Let $G, H, K$, $\mA$ and $\alpha$ be as in \Cref{Cc-HA-Cc-KA-distance}. Then,
\begin{enumerate}
\item $d_{KK}(A\rtimes^r_{\alpha}H, A\rtimes^r_{\alpha}K)=1 = d_{0}(A\rtimes^r_{\alpha}H, A\rtimes^r_{\alpha}K)$ in $\mA \rtimes^r G$; and,
\item $d_{KK}(A\rtimes^u_{\alpha}H, A\rtimes^u_{\alpha}K)=1 = d_{0}(A\rtimes^u_{\alpha}H, A\rtimes^u_{\alpha}K) $ in  $A\rtimes^u_{\alpha}G$.
\end{enumerate}
\end{cor}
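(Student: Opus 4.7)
The plan is to observe that this corollary is a direct combination of the two preceding propositions (\Cref{Cc-HA-Cc-KA-distance} and \Cref{Cc-HA-Cc-KA-distance-universal}) together with the density-invariance properties of the two distances. Specifically, by construction $\mA \rtimes^r_{\alpha} H$ (respectively, $\mA \rtimes^u_{\alpha} H$) is the norm-closure of $C_c(H, \mA)$ inside $\mA \rtimes^r_{\alpha} G$ (respectively, $\mA \rtimes^u_{\alpha} G$), via the canonical embedding recalled in \Cref{crossed-product-facts}(1), and the analogous statement holds for $K$.

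For the reduced case, I would first invoke \Cref{Cc-HA-Cc-KA-distance}, which already gives $d_{KK}(C_c(H,\mA),C_c(K,\mA)) = 1 = d_0(C_c(H,\mA),C_c(K,\mA))$ inside $\mA \rtimes^r_{\alpha} G$. Then \Cref{dense-distance} yields
\[
d_{KK}\!\left(\mA \rtimes^r_{\alpha} H,\, \mA \rtimes^r_{\alpha} K\right) = d_{KK}\!\left(\overline{C_c(H,\mA)},\, \overline{C_c(K,\mA)}\right) = d_{KK}(C_c(H,\mA),C_c(K,\mA)) = 1,
\]
and the analogous invariance of $d_0$ under closure recorded in \Cref{d0-basics}(2) gives the corresponding equality for $d_0$. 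The universal case is treated identically, replacing \Cref{Cc-HA-Cc-KA-distance} by \Cref{Cc-HA-Cc-KA-distance-universal} and the reduced closure by the universal one.

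There is no genuine obstacle: the only thing to verify is that the embeddings of \Cref{crossed-product-facts}(1) (and its universal analogue) identify the crossed product of the subgroup with the closure of $C_c(H,\mA)$ inside the crossed product of the ambient group, which is exactly the content of those references. Hence the proof reduces to a one-line citation of the preceding propositions plus the density lemmas, and no further computation is required.
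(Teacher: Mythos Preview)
Your proposal is correct and follows exactly the paper's own approach: the paper states this corollary as an immediate consequence of \Cref{dense-distance}, its analogue for $d_0$ in \Cref{d0-basics}(2), and the two preceding propositions (\Cref{Cc-HA-Cc-KA-distance} and \Cref{Cc-HA-Cc-KA-distance-universal}). Your additional remark that one must identify $\mA\rtimes^r_{\alpha}H$ (resp., $\mA\rtimes^u_{\alpha}H$) with the closure of $C_c(H,\mA)$ inside the ambient crossed product is a fair point, and is indeed handled by \Cref{crossed-product-facts}(1) and its universal analogue.
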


\subsubsection{Some observations related to the $C^*$-algebras associated to groups}

Recall that for any discrete group $G$, if $\mA =\C$ and $\alpha: G
\to \mathrm{Aut}(G)$ is the trivial action, then $\mA
\rtimes^r_{\alpha} G$ (resp., $\mA \rtimes^u_r G$) is just the reduced
group $C^*$-algebra $C^*_r(G)$ (resp., the (universal) group
$C^*$-algebra $C^*_u(G)$). Thus, we readily obtain the following:

\begin{cor}\label{d-CH-CK}\label{dKK-d0-group-algebras}
Let $G$ be a discrete group and $H$ and $K$ be two distinct subgroups of $G$. Then,
\begin{enumerate}
\item $d_{KK}(\C[H], \C[K]) = d_{KK}(C^*_r(H), C^*_r(K)) = 1 =  d_{0}(\C[H], \C[K]) = d_{0}(C^*_r(H), C^*_r(K)) $ in $C^*_r(G)$; and, 
\item $d_{KK}(\C[H], \C[K]) = d_{KK}(C^*_u(H), C^*_u(K)) = 1 = d_{0}(\C[H], \C[K]) = d_{0}(C^*_u(H), C^*_u(K)) $ in $C^*_u(G)$. 
\end{enumerate}
\end{cor}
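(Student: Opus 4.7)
The plan is to derive this corollary by specializing \Cref{distances-crossed-product-subalgebras} to the trivial action of $G$ on $\C$ and then invoking the density results \Cref{dense-distance} and \Cref{d0-basics}(2) to replace the closures by dense subalgebras.

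First, I would set $\mA = \C$ and let $\alpha: G \to \Aut(\C)$ be the trivial action. Under these identifications, $C_c(H, \C) = \C[H]$ and $C_c(K, \C) = \C[K]$, while $\C \rtimes^r_\alpha G = C^*_r(G)$, $\C \rtimes^r_\alpha H = C^*_r(H)$, and analogously for the universal crossed products (yielding $C^*_u(G)$, $C^*_u(H)$, $C^*_u(K)$). Under these identifications, the group $C^*$-algebras $C^*_r(H)$ and $C^*_u(H)$ are, by definition, the norm completions of $\C[H]$ inside $C^*_r(G)$ and $C^*_u(G)$, respectively (and likewise for $K$).

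Next, apply \Cref{distances-crossed-product-subalgebras} in these two specializations. In the reduced case this immediately gives
\[
d_{KK}\big(C^*_r(H), C^*_r(K)\big) = 1 = d_0\big(C^*_r(H), C^*_r(K)\big) \text{ in } C^*_r(G),
\]
and in the universal case
\[
d_{KK}\big(C^*_u(H), C^*_u(K)\big) = 1 = d_0\big(C^*_u(H), C^*_u(K)\big) \text{ in } C^*_u(G).
\]
Since $C^*_r(H) = \overline{\C[H]}$ and $C^*_r(K) = \overline{\C[K]}$ inside $C^*_r(G)$ (and similarly in the universal setting), \Cref{dense-distance} yields $d_{KK}(\C[H], \C[K]) = d_{KK}\big(\overline{\C[H]}, \overline{\C[K]}\big)$, and the analogous statement for $d_0$ recorded in \Cref{d0-basics}(2) gives $d_0(\C[H], \C[K]) = d_0\big(\overline{\C[H]}, \overline{\C[K]}\big)$. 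Chaining these equalities gives all four quantities in each part equal to $1$.

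There is essentially no obstacle here: the argument is a direct specialization plus a density step. The only thing worth double-checking is that the canonical embeddings $C^*_r(H) \hookrightarrow C^*_r(G)$ and $C^*_u(H) \hookrightarrow C^*_u(G)$ identify the completions of $\C[H]$ computed intrinsically and ambiently, which is exactly \Cref{crossed-product-facts}(1) and its universal analogue recalled just before \Cref{Cc-HA-Cc-KA-distance-universal}.
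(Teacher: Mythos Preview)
Your proposal is correct and follows essentially the same approach as the paper: the corollary is obtained by specializing the crossed-product results to $\mA=\C$ with the trivial action, which is exactly what the paper indicates in the sentence preceding the statement. Your explicit invocation of \Cref{dense-distance} and \Cref{d0-basics}(2) to pass between $\C[H],\C[K]$ and their closures is a slight elaboration of what the paper leaves implicit (since \Cref{Cc-HA-Cc-KA-distance} and \Cref{Cc-HA-Cc-KA-distance-universal} already give the $\C[H],\C[K]$ equalities directly), but this is harmless and the argument is sound.
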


As in \Cref{crossed-product-facts}, for any subgroup $H$ of $G$, we
consider the identity map $\eta_H: C^*_r(G) \to C^*_r(G)$ with the
pre-Hilbert $C^*_r(H)$-norm $\|\eta_H(x)\|:= \|E_H(x^*x)\|_r^{1/2}$
for all $x \in \C[G]$. Also, we simply write $\eta$ for $ \eta_{e}$.

\begin{lemma}\label{ce-on-CG}
With running notations, 
\begin{equation}
\left\|\eta\left(\sum_{g \in G} \alpha_g g \right)\right\|^2\leq \sum_{g
   \in G} |\alpha_g| \left( \sum_{x \in gH}|\alpha_x|\right)
 \end{equation}
 for all
 $\sum_g \alpha_g g \in \C[G]$.
\end{lemma}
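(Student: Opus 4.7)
The plan is to compute $y^{*}y$ directly in $\mathbb{C}[G]$, apply the conditional expectation (interpreted on the coefficient level), and then pass from the reduced $C^{*}$-norm to the $\ell^{1}$-norm of the coefficients via the triangle inequality together with $\|\lambda_{k}\|_{B(\ell^{2}(G))} = 1$. Concretely, writing $y = \sum_{g \in G}\alpha_{g}g$ (a finite sum), I first expand
\[
y^{*}y \;=\; \sum_{g,h \in G}\overline{\alpha_{g}}\,\alpha_{h}\, g^{-1}h \;=\; \sum_{k \in G}\Big(\sum_{g \in G}\overline{\alpha_{g}}\,\alpha_{gk}\Big)\,k,
\]
after reindexing with $k := g^{-1}h$. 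Applying the conditional expectation kills every term with $k \notin H$, leaving an element supported on $H$.

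Next, I bound the reduced norm of this element in $C^{*}_{r}(H) \subset C^{*}_{r}(G)$. Since $\|\lambda_{k}\| = 1$ for every $k$, the triangle inequality gives
\[
\left\| \sum_{k \in H} \Big( \sum_{g \in G} \overline{\alpha_{g}}\,\alpha_{gk} \Big) k \right\|_{r} \;\leq\; \sum_{k \in H}\Big|\sum_{g \in G}\overline{\alpha_{g}}\,\alpha_{gk}\Big| \;\leq\; \sum_{k \in H}\sum_{g \in G}|\alpha_{g}|\,|\alpha_{gk}|.
\]
The final step is a clean bookkeeping manoeuvre: I interchange the order of summation and, for each fixed $g$, reparameterize the inner sum by $x := gk$ so that $x$ ranges over the coset $gH$ as $k$ ranges over $H$. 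This yields
\[
\sum_{g \in G}|\alpha_{g}|\sum_{k \in H}|\alpha_{gk}| \;=\; \sum_{g \in G}|\alpha_{g}|\sum_{x \in gH}|\alpha_{x}|,
\]
which is exactly the right-hand side of the claimed inequality.

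There is no real obstacle: the only substantive step is the passage from the operator norm to an $\ell^{1}$-sum of coefficients, and in the reduced (left regular) picture this is an immediate application of the triangle inequality. Everything else is algebraic manipulation in $\mathbb{C}[G]$ and a change of index $x = gk$. In particular, the inequality is a pointwise bound at the level of coefficients, so it does not require any functional analytic input beyond $\|\lambda_{k}\| = 1$. Note that the case $H = \{e\}$ recovers the trivial identity $\|\eta(y)\|^{2} = \sum_{g}|\alpha_{g}|^{2}$, consistent with \Cref{ag-inequality}.
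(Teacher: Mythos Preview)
Your argument is correct and follows essentially the same route as the paper: both compute $E_H(y^{*}y)$ explicitly in $\mathbb{C}[G]$ and then bound its reduced norm by the $\ell^{1}$-norm of coefficients via the triangle inequality and $\|\lambda_k\|=1$. The only difference is bookkeeping --- the paper organizes the sum as $E_H(y^{*}y)=\sum_g \bar{\alpha_g}\,g^{-1}\big(\sum_{x\in gH}\alpha_x x\big)$ and leaves the final norm estimate implicit, whereas you index by $k=g^{-1}h\in H$ and spell out the bound and the substitution $x=gk$ in full.
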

\begin{proof}
  Note that
  \[
  E\left(\Big(\sum_{g \in G}
  \alpha_g g\Big)^*\Big(\sum_{g \in G} \alpha_g g\Big)\right)= \sum_{g
    \in G} \sum_{x \in gH} \bar{\alpha_g}\alpha_x g^{-1}x =
  \sum_g \bar{\alpha_g}g^{-1} \Big(\sum_{x \in g H} \alpha_x x\Big).
  \]
 for all
 $\sum_g \alpha_g g \in \C[G]$.
\end{proof}

Further, the reduced group $C^*$-algebra always admits a canonical
faithful tracial state $\tau: C^*_r(G) \to \C$ which satisfies
$\tau(x) = \langle \lambda(x) \delta_e , \delta_e \rangle$ for all $x
\in \C[G]$, where $\lambda: \C[G] \to B(\ell^2(G))$ is the faithful $*$-representation induced by the (left)
regular representation of $\C[G]$.

\begin{remark}  \label{H-trivial-case}
Let $G$ be a  discrete group.
\begin{enumerate}
\item If $H$ is the trivial subgroup of $G$, then the conditional
  expectation $E$ from $C^*_r(G)$ onto $\mbb{C}\ (\equiv C^*_r(H))$ is
  just the tracial state $\tau$ on $C^*_r(G)$, which satisfies $
  E(\sum_{g \in G}\alpha_{g}g)=\alpha_{e} $ for all $\sum_g \alpha_g g
  \in \C[G]$, the $\C$-valued inner product induced by $E$ is just the
  usual inner product on $C^*_r(G)$ induced by $\tau$, and
  \begin{equation}\label{eta-formula}
  \left\|\eta(\sum_{g \in G} \alpha_g g )\right\|=\sqrt{\sum_{g \in
      G}\left|\alpha_g\right|^2}
  \end{equation}
  for all $\sum_g \alpha_g g \in
  \C[G]$ because \[ E\left((\sum_{g \in G} \alpha_g g)^*(\sum_{g \in
    G} \alpha_g g)\right)=\sum_{g \in G}\left|\alpha_g\right|^2.
  \]
\item We thus deduce that $\|\sum_g \alpha_g g \|_r \geq
  \sqrt{\Big(\sum_g|\alpha_g|^2)}$ for all $\sum_g \alpha_g g \in
  \C[G]$.

In particular, $\sqrt{2} \leq \|g -h \|_r \leq {2}$ for any two
distinct elements $g , h$ of $ G$. Thus, $\{ g : g \in G\}$ is a
discrete linearly independent subset of the unit sphere of $C^*_r(G)$.

\item If $G$ is finite, then $C^*_r(G) =\C[G]$ and
\begin{equation}\label{norm-equivalence}
 \|\eta(x) \| \leq \|x\|_r \leq |G|\, \|\eta(x)\|
  \end{equation}
  \text{ for all } $ x \in \C[G]$, because, for every $x = \sum_g
  \alpha_g g \in \C[G]$, $\|\eta (x) \| = \sqrt{\sum_{g \in
      G}\left|\alpha_g\right|^2}$ (and the last inequality in
  (\ref{norm-equivalence}) follows from the H\"older's inequality).

Moreover, $E: \C[G] \to \C$ has finite index with a quasi-basis $\{g:
g \in G\}$ and $\mathrm{Ind}_W(E) = |G|$. 
  \end{enumerate}
\end{remark}

\begin{remark}
If $\mB$ is a $*$-subalgebra of a unital $C^*$-algebra $\mA$, then
$\mathcal{N}_{\mA}(\mB)$ is a subgroup of
$\mathcal{N}_{\mA}(\overline{\mB})$. In particular, for any subgroup
$H$ of a discrete group $G$,
\[
\mathcal{N}_G(H) \leq \mathcal{N}_{C^*_r(G)}(\C[H]) \leq \mathcal{N}_{C^*_r(G)}(C^*_r(H)). 
\]
\end{remark}

Given a subgroup $H$ of a discrete group $G$ and a unitary $u$ in
$\mU(C^*_r(G))\setminus \mcal{N}_{\C[G]}(\C[H])$, it is natural to ask
whether $u \C[H] u^* = \C[K]$ or $u C^*_r(H) u^* = C^*_r(K)$ for some
subgroup $K$ of $G$ or not. Obviously, for every $g \in G$, $g \C[H]
g^* = \C[gHg^{-1}]$, $g C^*_r(H) g^* = C^*_r(gHg^{-1})$ and $\|g -
\1\|_r \geq \sqrt{2}$.  However, when $u\notin G$, then it is not
clear when $u\C[H] u^*$ equals $\C[K]$ for some subgroup $K$ of
$G$. \Cref{dKK-d0-group-algebras} allows us to deduce the following
partial answer to this question.

\begin{proposition}\label{CH-conjugate}
Let $H$ be a  proper subgroup of a  discrete group $G$ and $u$
be a unitary in $\C[G]$ (resp., $C^*_r(G)$) such that $\|u - \1\|_r <
1/2$. If $u\mbb{C}[H]u^{*} = \mbb{C}[K]$ (resp., $ uC^*_r(H) u^* =
C^*_r(K)$) for some subgroup $K$ of $G$, then $K = H$ and, in particular, 
$u \in \mathcal{N}_{\C[G]}(\C[H])$ (resp., $u \in
\mathcal{N}_{C^*_r(G))}(C^*_r(H))$), 
\end{proposition}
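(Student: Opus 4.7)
The plan is essentially to combine \Cref{u-1-relation} with \Cref{d-CH-CK} and arrive at a contradiction if $K \neq H$. The simple-minded inequality $d_{KK}(\mB, u\mB u^{*}) \leq 2\|u - \1\|$ proved in \Cref{u-1-relation} actually only requires the triangle inequality $\|xw - wx\| \leq \|xw - x\| + \|x - wx\| \leq 2\|w - \1\|$ applied on the unit ball, so although it is stated for $C^*$-subalgebras, it is valid in the normed-algebra setting as well, in particular for the pair $(\C[H] \subset \C[G])$ with respect to $\|\cdot\|_r$ and for $(C^*_r(H) \subset C^*_r(G))$.

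First, I would assume $u\C[H]u^{*} = \C[K]$ (respectively $uC^*_r(H)u^{*} = C^*_r(K)$) for some subgroup $K \leq G$, and apply the above inequality to conclude
\[
d_{KK}\bigl(\C[H],\C[K]\bigr) \;=\; d_{KK}\bigl(\C[H], u\C[H]u^{*}\bigr) \;\leq\; 2\|u-\1\|_r \;<\; 1,
\]
and similarly $d_{KK}(C^*_r(H),C^*_r(K)) < 1$ in the reduced case.

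Next, I would invoke \Cref{d-CH-CK}, which tells us that for any two \emph{distinct} subgroups, $d_{KK}(\C[H],\C[K]) = 1$ in $C^*_r(G)$ (and similarly $d_{KK}(C^*_r(H),C^*_r(K)) = 1$). The strict inequality obtained above therefore forces $K = H$. Consequently $u\C[H]u^{*} = \C[H]$ (respectively $uC^*_r(H)u^{*} = C^*_r(H)$), which by definition of the unitary normalizer places $u \in \mathcal{N}_{\C[G]}(\C[H])$ (respectively $u \in \mathcal{N}_{C^*_r(G)}(C^*_r(H))$), completing the proof.

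There is no real obstacle here: the statement is a clean application of the two results already in hand, and the only minor point to verify is that \Cref{u-1-relation}'s proof goes through verbatim when the ambient algebra is merely a normed $*$-algebra such as $\C[G]$ equipped with $\|\cdot\|_r$, which it does because the argument is purely algebraic and metric.
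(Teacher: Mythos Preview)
Your proposal is correct and follows exactly the same approach as the paper's proof: apply \Cref{u-1-relation} to get $d_{KK}(\C[H],\C[K]) \leq 2\|u-\1\|_r < 1$, then invoke \Cref{d-CH-CK} to force $K=H$. Your additional remark that the inequality in \Cref{u-1-relation} extends verbatim to the normed-algebra setting (needed for the $\C[H]\subset\C[G]$ case) is a valid point that the paper glosses over.
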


\begin{proof}
We first discuss the group algebra case. Let $u \in \C[G]$ with $\|u -
\1\|_r < 1/2$.

Suppose that $u\mbb{C}[H]u^{*} = \mbb{C}[K]$ for some subgroup $K$ of
$G$. Then, by \Cref{u-1-relation}, we observe that
\[
d_{KK}(\C[H], \C[K]) =  d_{KK}(\C[H], u \C[H] u^*) \leq 2 \|u - \1\|_r < 1.
\]
Thus,  $K = H$, by \Cref{d-CH-CK}. 

Even in the reduced group $C^*$-algebra case, when $u$ is a unitary in
$C^*_r(G)$ satisfying the inequality $\|u - \1\|_r < 1/2$ and that $
uC^*_r(H) u^* = C^*_r(K)$ for some subgroup $K$ of $G$, the same
argument shows that $ d_{KK}( C^*_r(H), C^*_r(K))<1$; so that, $K = H$
(by \Cref{d-CH-CK} again).
\end{proof}

Here are two obvious reformulations of the preceding corollary.
\begin{remark}
  Let $H$, $G$ be as in \Cref{CH-conjugate} and
  $u$ be a unitary in $\C[G]$ (resp., $C^*_r(G)$).
\begin{enumerate}
\item If $u \C[H] u^* = \C[K]$ (resp., $u C^*_r(H) u^* = C^*_r(K)$)
  for some subgroup $K$ other than $H$, then $\|u - \1\|_r \geq 1/2$.
\item   If $\|u - \1\|_r< 1/2$, then the conjugate $*$-subalgebra $u
  \C[H] u^*$ (resp., $u C^*_r(H) u^*$) is not equal to $\C[K]$ (resp.,
  $C^*_r(K)$) for any subgroup $K$ other than $H$. 
\end{enumerate}
\end{remark}

\begin{remark}
One could ask whether every unitary $u $ in
$\mathcal{N}_{\C[G]}(\C[H])$ (resp., in
$\mathcal{N}_{C^*_r(G)}(C^*_r(H))$) satisfies the inequality $\|u -
\1\|_r < 1/2$. This is trivially seen to be false.

Indeed, if $H$ is a subgroup of $G$ with a non-trivial normalizer,
then for any $e \neq g \in \mathcal{N}_G(H)$, $g \C[H]g^* = \C [g
  Hg^{-1}] = \C[H]$ whereas, as noted in \Cref{H-trivial-case}(2), we have
\[
\|g - \1\|_r = \|g - e\|_r \geq \sqrt{2} > 1/2.
\]
By the same argument, one also concludes that if $u \in
\mathcal{N}_{C^*_r(G)}(C^*_r(H))$, then the inequality $\|u - \1\|_r<
1/2$ need not be true.
\end{remark}

\begin{remark} 
Note that, by \Cref{close-unitary}, we can always find a unitary $u$
in $C^*_r(G)$ such that $0< \|\1- u\|_r< \frac{1}{2}$. Hence, as is well
known, not every $C^{*}$-subalgebra of $C^*_r(G)$ is a reduced
subgroup $C^*$-algebra.
\end{remark}
The following recipe provides a concrete way of obtaining unitaries
arbitrarily close to $\1$.
\begin{lemma}\label{u-theta}
Let $G$ be a discrete group. If there exists an element $g \in G$ such
that $g=g^{-1}$, then $u_{\theta}:=\cos(\theta) e + i \sin (\theta) g$
is a unitary in $\C[G]$ for every $\theta \in \R$. Also, for each
$\epsilon > 0$, there exists a $\delta > 0$ such that $\|u_\theta -
\1\|_r < \epsilon$ whenever $|\theta| < \delta$.
\end{lemma}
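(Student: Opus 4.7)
The plan is to verify the two assertions by direct computation, using the involution formula on $\C[G]$ and the triangle inequality for $\|\cdot\|_r$.

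For unitarity, I would first compute the involution of $u_\theta$. Since $e^{-1} = e$ and $g = g^{-1}$ (hence $g^* = g^{-1} = g$), and since the involution on $\C[G]$ sends $\sum \alpha_h h$ to $\sum \overline{\alpha_h} h^{-1}$, we get $u_\theta^* = \cos(\theta)\, e - i\sin(\theta)\, g$. Multiplying out,
\[
u_\theta u_\theta^* = \cos^2(\theta)\, e - i\cos(\theta)\sin(\theta)\, g + i\sin(\theta)\cos(\theta)\, g + \sin^2(\theta)\, g^2,
\]
and since $g^2 = g g^{-1} = e$, the mixed terms cancel and the remaining ones collapse to $(\cos^2\theta + \sin^2\theta)\,e = \1$. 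The same calculation gives $u_\theta^* u_\theta = \1$, so $u_\theta \in \mU(\C[G])$ for every $\theta \in \R$.

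For the continuity assertion, I would estimate
\[
\|u_\theta - \1\|_r = \|(\cos\theta - 1)\, e + i\sin\theta\, g\|_r \leq |\cos\theta - 1| + |\sin\theta|,
\]
using the fact that $\|e\|_r = \|g\|_r = 1$ together with the triangle inequality. Since $\cos\theta \to 1$ and $\sin\theta \to 0$ as $\theta \to 0$, for any $\epsilon > 0$ one can choose $\delta > 0$ small enough so that $|\theta| < \delta$ forces $|\cos\theta - 1| + |\sin\theta| < \epsilon$, and therefore $\|u_\theta - \1\|_r < \epsilon$.

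No real obstacle arises here: the self-inverse hypothesis $g = g^{-1}$ is exactly what makes the cross-terms in $u_\theta u_\theta^*$ cancel and the quadratic term collapse to $\1$, while the continuity follows from a one-line triangle-inequality estimate. I would present the proof as two short paragraphs mirroring the two assertions.
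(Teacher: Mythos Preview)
Your proof is correct and follows essentially the same approach as the paper: a direct computation of $u_\theta u_\theta^*$ using $g^2=e$ for unitarity, and the triangle-inequality estimate $\|u_\theta-\1\|_r\le|\cos\theta-1|+|\sin\theta|$ for the continuity claim.
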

\begin{proof}
Let $\theta \in \R$. Then,
\begin{align*}
u_{\theta}u_{\theta}^{*}&= (\cos(\theta) e + i \sin (\theta) g)
(\cos(\theta) e - i \sin (\theta) g)\\ &=(\cos^2(\theta) + \sin^2
(\theta)) e + (i \sin(\theta)\cos(\theta)- i\sin(\theta)\cos(\theta))
g \\ &= e.
\end{align*} 
 Similiarly, $u_{\theta}^{*}u_{\theta}= e$.  Hence, $u_{\theta}$ is a
 unitary in $\C[G]$ for every $\theta \in \R$.

Note that, for any given $\epsilon > 0$, there exists a $\delta > 0$
such that $|\cos(\theta) - 1|, |\sin (\theta)|< \epsilon/2$ whenever
$|\theta| < \delta$. So,
 \[
 \|u_\theta -\1 \|_r = \|(\cos(\theta)-1)e + i \sin (\theta) g\|_r  \leq |\cos(\theta)-1| + |\sin(\theta)| < \epsilon
 \]
whenever $|\theta| < \delta$. 
\end{proof}

We can now find unitaries arbitrarily close to $\1$ yet not in the
normalizer of a subalgebra. Compare with \Cref{u-in-normalizer}.

\begin{lemma}\label{u-theta-normalizer}
Let $H$ be a proper subgroup of a discrete group $G$. If there exists
an element $g \in G \setminus H$ such that $g= g^{-1}$ and $g \notin
\mC_{G}(H)$ (centralizer of $H$ in $G$), then $u_{\theta}\notin
\mathcal{N}_{C^*_r(G)}(C^*_r(H))$ for every $\theta \in\R \setminus
\{\frac{n\pi}{2}: n\in \Z\}$.
\end{lemma}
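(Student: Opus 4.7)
The plan is to produce a specific element $h\in H$ whose conjugate $u_\theta h u_\theta^*$ fails to lie in $C^*_r(H)$, and thereby contradict membership of $u_\theta$ in $\mathcal{N}_{C^*_r(G)}(C^*_r(H))$. Since $g\notin \mC_G(H)$, I can pick some $h\in H$ with $gh\neq hg$. I would then compute, using $g^{2}=e$,
\[
u_\theta h u_\theta^{*} = \cos^{2}\theta\, h + \sin^{2}\theta\, ghg + i\sin\theta\cos\theta\,(gh-hg).
\]

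Next, I would check that, for $\theta\notin\{n\pi/2:n\in\Z\}$, all three group elements $ghg,\,gh,\,hg$ lie outside $H$. For $gh$ and $hg$: if either belonged to $H$ then $g=g\cdot h\cdot h^{-1}$ or $g=h^{-1}\cdot h\cdot g$ would force $g\in H$, contradicting $g\notin H$. For $ghg$: if $ghg=h'\in H$ then multiplying by $g$ on both sides and using $g^{2}=e$ yields $h=g^{2}hg^{2}=gh'g$, hence $h'=ghg$ and also $ghg=h$ after rearrangement (explicitly, $ghg=h'$ and $g^{2}=e$ give $h=gh'g$, so $gh'g\in H$; iterating shows $h'=h$, whence $ghg=h$, contradicting $gh\ne hg$). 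Moreover, $ghg,\,gh,\,hg$ are pairwise distinct: equality of any two forces $g=e$, which is excluded since $e\in H$ and $g\notin H$.

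Consequently, applying the canonical conditional expectation $E_H:C^*_r(G)\to C^*_r(H)$ from \Cref{crossed-product-facts}(2) (with trivial action on $\C$) gives
\[
E_H\bigl(u_\theta h u_\theta^{*}\bigr)=\cos^{2}\theta\,h,
\]
since the other three group elements supporting $u_\theta h u_\theta^{*}$ lie in $G\setminus H$. The difference $u_\theta h u_\theta^{*}-E_H(u_\theta h u_\theta^{*})$ is then a nonzero $\C$-linear combination of the three distinct group elements $ghg,\,gh,\,hg$ (the coefficients $\sin^{2}\theta$ and $\pm i\sin\theta\cos\theta$ are all nonzero for our range of $\theta$), so it is nonzero in $\C[G]\subset C^*_r(G)$ because distinct group elements are linearly independent (cf.\ \Cref{H-trivial-case}(2)).

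Finally, since $E_H$ restricts to the identity on $C^*_r(H)$, the inequality $E_H(u_\theta h u_\theta^{*})\neq u_\theta h u_\theta^{*}$ shows that $u_\theta h u_\theta^{*}\notin C^*_r(H)$, even though $h\in H\subset C^*_r(H)$. Therefore $u_\theta C^*_r(H)u_\theta^{*}\neq C^*_r(H)$, so $u_\theta\notin\mathcal{N}_{C^*_r(G)}(C^*_r(H))$. The only mildly delicate point is the verification that $ghg\notin H$ under the hypothesis $g=g^{-1}$; everything else is routine bookkeeping inside $\C[G]$ combined with faithfulness of $E_H$.
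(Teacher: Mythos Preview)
Your approach mirrors the paper's: choose $h\in H$ with $gh\neq hg$, compute $u_\theta h u_\theta^{*}$, and observe that it is supported on group elements outside $H$. However, your claim that $ghg\notin H$ is \emph{false} in general, and the argument you offer for it is circular. Consider the very example in \Cref{u-theta-S3}: $G=S_3$, $H=A_3$, $g=(12)$, $h=(123)$. Then $ghg=(12)(123)(12)=(132)\in A_3=H$. Your purported proof derives $h=gh'g$ from $ghg=h'$ and then asserts ``iterating shows $h'=h$'', but iterating merely returns you to $ghg=h'$; no contradiction emerges.

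Fortunately this point is unnecessary, and the paper's proof does not make it. The paper only records that $gh,hg\notin H$ (which you verify correctly), and this already suffices: the four group elements $h$, $ghg$, $gh$, $hg$ are pairwise distinct (as you check), so the coefficient of $gh$ in $u_\theta h u_\theta^{*}$ is exactly $i\sin\theta\cos\theta\neq 0$. Since $gh\notin H$, applying $E_H$ annihilates this term regardless of whether $ghg$ lies in $H$, whence $E_H(u_\theta h u_\theta^{*})\neq u_\theta h u_\theta^{*}$. Simply drop the claim about $ghg$ (and the formula $E_H(u_\theta h u_\theta^{*})=\cos^{2}\theta\,h$, which may be off by a $\sin^{2}\theta\,ghg$ term) and argue directly from the $gh$ coefficient; the rest of your proof then goes through exactly as in the paper.
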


\begin{proof}
Since $g \notin \mC_{G}(H)$, there exists an $h \in H$ such that $gh\neq hg$;  also $gh, hg \notin H$. Then, for this $h \in \C[H]$ and $\theta \in\R \setminus
\{\frac{n\pi}{2}: n\in \Z\}$, we have
\[
u_{\theta}hu_{\theta}^{*}= \cos^{2}(\theta)h
+i \sin (\theta)\cos (\theta)gh- i \sin (\theta) \cos (\theta)hg+
\sin^{2} (\theta)ghg.
\]
Hence, $u_{\theta}\notin
\mathcal{N}_{C^*_r(G)}(C^*_r(H))$ for every $\theta \in\R \setminus
\{\frac{n\pi}{2}: n\in \Z\}$.
\end{proof}

\begin{example}\label{u-theta-S3}
Let $G = S_3$ and $H = A_3$.  Then, $u_{\theta}: = \cos(\theta) e + i
\sin (\theta) (12)$ is a unitary in $\C[S_3]$ for every $\theta \in
\R$. Clearly, by the preceding lemma, $u_\theta \notin
\mathcal{N}_{\C[S_3]}(\C[A_3])$ for every $\theta \in\R \setminus
\{\frac{n\pi}{2}: n\in \Z\}$, because $(12) \notin
\mC_{S_3}(A_3)$. Also, we can choose $0 < \theta < \pi/2$ small enough
so that $\|u_\theta - \1\|_r$ is as small as we wish. Thus,
\begin{enumerate}
\item  for each $\epsilon > 0$, there exists a unitary $u$ in $\C[S_3]$ such that
\[
0< d_{KK}(\C[A_3], u \C[A_3] u^*) < \epsilon; \text{and,}
\]
\item $\1$ is not an interior point of
  $\mathcal{N}_{\C[S_3]}(\C[A_3])$ in $\mathcal{U}(\C[S_3])$.
\end{enumerate}
\end{example}

\subsection{Kadison-Kastler distance between (Banach) subgroup algebras}
Recall that, for a discrete group $G$, the Banach space
\[
\ell^1(G):= \Big\{f : G\rightarrow \C \,\,:\,\, \sum_{g \in G}|f(g)| <
\infty \Big\}
\]
is a unital Banach $*$-algebra with multiplication  given by
convolution, i.e., for $a, b \in \ell^1(G),$
$$(ab)(g):= \sum_{h \in G}a(h)b(h^{-1}g);$$
and, involution  given by
$$ a^{*}(g):= \overline{a(g^{-1})}$$ for $a \in \ell^1(G), g \in G.$

Further, for each $g \in G,$ we define $u_{g}\in \ell^1(G)$ by
$u_{g}(h)=\delta_{g, h}$. Then, $u_e$ is the multiplicative identity
for $\ell^1(G)$. 
\begin{remark} With running notations,
  \begin{enumerate}
    \item
There exists an injective
unital $^*$-homomorphism $\mathit{i}: \C[G] \ra \ell^1(G)$ such that
$i(g) = u_g$ for all $g \in G$ and the image of $i$ is dense in $\ell^1(G)$.

In particular, $\C[G]$ can be considered as a dense subspace of $\ell^1(G)$.
\item For any subgroup $H$ of $G$, the natural embedding of  $\C[H]$ into $\C[G]$ extends  to an isometric
unital $*$-homomorphism from $\ell^1(H)$ into $\ell^1(G)$. We can thus
identify $\ell^1(H)$ with a unital Banach $*$-subalgebra of
$\ell^1(G)$.
\end{enumerate}
  \end{remark}
 
\begin{proposition}\label{distance-Banach-gp-algebras}
  Let $H$ and $K$ be two distinct subgroups of a discrete group $G$. Then,
  \begin{equation}
d_{KK}\big(\C[H], \C[K]\big)=    d_{KK}\Big(\ell^1(H), \ell^1(K)\Big)=1
  \end{equation}
  in $\ell^1(G)$.
\end{proposition}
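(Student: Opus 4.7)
The plan is to imitate the strategy already used in Proposition \ref{Cc-HA-Cc-KA-distance} (and its $d_0$ twin): exhibit a distinguished unit-ball element of $\ell^1(H)$ which is uniformly far from every element of $B_1(\ell^1(K))$, forcing the Kadison--Kastler distance to be at least $1$, and then appeal to \Cref{basic-facts}(1) for the matching upper bound. The complex group algebra statement then drops out from \Cref{dense-distance} together with the fact that $\C[H]$ is dense in $\ell^1(H)$ (and $\C[K]$ in $\ell^1(K)$) inside $\ell^1(G)$.

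For the core calculation, since $H \neq K$, we may assume without loss of generality that there exists $h_0 \in H \setminus K$. Consider $u_{h_0} \in B_1(\ell^1(H))$, and take an arbitrary $x = \sum_{k \in K} a_k u_k \in B_1(\ell^1(K))$ (so $\sum_{k \in K} |a_k| \leq 1$). Since $h_0 \notin K$, the supports of $u_{h_0}$ and $x$ are disjoint, so
\[
\|u_{h_0} - x\|_1 = |1| + \sum_{k \in K} |a_k| \geq 1.
\]
Hence $d\bigl(u_{h_0}, B_1(\ell^1(K))\bigr) \geq 1$, and therefore $d_{KK}\bigl(\ell^1(H), \ell^1(K)\bigr) \geq 1$. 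The reverse inequality is just \Cref{basic-facts}(1), giving $d_{KK}\bigl(\ell^1(H), \ell^1(K)\bigr) = 1$.

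Finally, the embedding $i : \C[G] \to \ell^1(G)$ has dense image, and by construction it sends $\C[H]$ (resp.\ $\C[K]$) onto a dense $*$-subalgebra of $\ell^1(H)$ (resp.\ $\ell^1(K)$). Applying \Cref{dense-distance} inside the normed algebra $\ell^1(G)$ yields
\[
d_{KK}\bigl(\C[H], \C[K]\bigr) = d_{KK}\bigl(\ell^1(H), \ell^1(K)\bigr) = 1,
\]
which is the claim. There is no real obstacle here; the only point to be a little careful about is that the witness $u_{h_0}$ really sits in $B_1(\ell^1(H))$ (its $\ell^1$-norm is exactly $1$) and that the $\ell^1$-norm of an element supported on the disjoint union of two sets is the sum of the norms of its two pieces, which is what gives the clean lower bound $\|u_{h_0} - x\|_1 \geq 1$.
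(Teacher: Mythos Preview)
Your proof is correct and follows essentially the same approach as the paper: both pick a group element $h_0 \in H \setminus K$ (after a symmetry reduction), compute $\|u_{h_0} - x\|_1 \geq 1$ for every $x \in B_1(\ell^1(K))$ using disjoint supports, and invoke \Cref{dense-distance} to pass between $\C[H],\C[K]$ and their $\ell^1$-closures. The only cosmetic differences are that the paper first reduces to $\C[H],\C[K]$ and then computes, and separately dispatches the nested case $H\subsetneq K$ via \Cref{basic-facts}(3), whereas your WLOG handles both cases uniformly.
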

\begin{proof}
Since $\mbb{C}[G]$ is dense in $\ell^1(G)$, by \Cref{dense-distance},
it is enough to show that $d_{KK}(\mbb{C}[H]),\mbb{C}[K])=1$ in
$\ell^1(G)$. In view of \Cref{basic-facts}(3), we can assume that
$H\neq H\cap K\neq K$.

For convenience, let $C:= \C[H]$ and $D := \C[K]$ and let $h\in H\setminus H\cap K$. Then,
\[
\|h- \sum_{k \in K} \alpha_k k\|_{1}\geq 1
\] 
for all $\sum_{k \in K} \alpha_k k \in
B_1(D)$.
This implies that, $d(h, B_1(D))\geq 1$. Thus, by definition, we get 
\[
d_{KK}(\mbb{C}[H]),\mbb{C}[K])\geq 1,
\]
and we are done.
  \end{proof}

\subsection{Kadison-Kastler and Christensen distances between subgroup  von Neumann subalgebras}
Recall that for any discrete group $G$, the group von Neumann algebra
associated to $G$ is the von Neumann algebra given by
\[
L(G) =
\{\lambda(G)\}'' \subseteq B(\ell^2(G)),
\]
where $\lambda: G \to B(\ell^2(G) ) $ is the left regular (unitary)
representation of $G$. Also, there is a natural $*$-isomorhphism
between $\C[G]$ and $*$-$\text{alg}(\lambda(G))$; and, for any subgroup
$H$ of $G$, there is a natural $*$-isomorphism from $L(H)$ onto
$\lambda(H)''\subset L(G)$; thus,
$L(H)$ can be considered as a von Neumann subalgebra of
$L(G)$. Further, $L(G)$ always admits a faithful normal tracial state
$\tau$ given by $\tau(x) = \langle x(\delta_e), \delta_e\rangle$ for
$x \in L(G)$. Thus, $L(G)$ admits an inner-product structure via
$\tau$ which induces a norm $\|\cdot \|_\tau$ on $L(G)$ given by
$\|x\|_\tau = \tau(x^*x)^{1/2}$, $x \in L(G)$.

\begin{proposition}\label{distances-gp-vNas}
Let $G$ be a discrete group and $H$ and $K$ be two distinct
non-trivial subgroups of $ G$.  Then, in $L(G)$, the distances
\[
d_C\big(\mathbb{C}[H], \mathbb{C}[K]\big), d_{MT} \big(\mathbb{C}[H],
\mathbb{C}[K]\big), d_{KK}\big(\mathbb{C}[H], \mathbb{C}[K]\big),
\]
\[
d_C\big(L(H), L(K)\big), 
d_{MT}\big(L(H), L(K)\big)\text{ and } d_{KK}\big(L(H), L(K)\big)
\]
are all equal  to $1$.
\end{proposition}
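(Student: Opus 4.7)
The plan is to reduce everything to showing $d_C(\C[H], \C[K]) \geq 1$, and then to deduce the remaining five equalities from the general inequalities and the equivalence results proved in \Cref{comparisons}.

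First, I would record the chain
\[
d_C(P,Q) \;\leq\; d_{MT}(P,Q) \;\leq\; d_{KK}(P,Q) \;\leq\; 1,
\]
valid for all $P,Q \in \mathrm{Sub}_{L(G)}$ by \Cref{basic-facts}(1), \Cref{dMT-dKK} and \Cref{dMT-facts}(3). Applying this with $P,Q \in \{\C[H],\C[K]\}$ and with $P,Q \in \{L(H),L(K)\}$ shows that all six distances lie in $[0,1]$. Moreover, since $\overline{\C[H]}^{S.O.T.}=L(H)$ and $\overline{\C[K]}^{S.O.T.}=L(K)$, \Cref{dMT-SOT} gives $d_{MT}(\C[H],\C[K]) = d_{MT}(L(H),L(K))$, and \Cref{dMT=dC} then yields
\[
d_C(\C[H],\C[K]) \;=\; d_{MT}(\C[H],\C[K]) \;=\; d_{MT}(L(H),L(K)) \;=\; d_C(L(H),L(K)).
\]
Consequently, once we show $d_C(\C[H],\C[K]) \geq 1$, all six quantities are pinned to $1$ by sandwiching.

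For the lower bound, since $H \neq K$ we may assume (after possibly swapping $H$ and $K$) that there exists $h \in H \setminus K$. The plan is to exhibit $\lambda(h) \in B_1(\C[H])$ as an element for which no $y \in \C[K]$ can come $\|\cdot\|_\tau$-close. Using the identification $L^2(L(G),\tau) \cong \ell^2(G)$ under which $\widehat{\lambda(g)} = \delta_g$, any $y = \sum_{k \in F}\alpha_k \lambda(k) \in \C[K]$ (finite $F \subset K$) satisfies
\[
\|\widehat{\lambda(h)} - \widehat{y}\|_\tau^2 \;=\; \Bigl\|\delta_h - \sum_{k \in F}\alpha_k \delta_k\Bigr\|_{\ell^2(G)}^2 \;=\; 1 + \sum_{k\in F} |\alpha_k|^2 \;\geq\; 1,
\]
because $h \notin K$ forces $\delta_h$ to be orthogonal to every $\delta_k$, $k \in F$. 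Hence $\C[H] \not\subset_\gamma \C[K]$ for any $\gamma < 1$, giving $d_C(\C[H],\C[K]) \geq 1$.

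The argument has no serious obstacle; the one point to watch is that \emph{a priori} $d_C$ allows the approximant $y$ to range over all of $Q$ rather than $B_1(Q)$, so one might worry that a large $y$ could approximate $\lambda(h)$ more efficiently. The Pythagorean identity above rules this out: since $\widehat{\lambda(h)} = \delta_h$ is already orthogonal to the $\|\cdot\|_\tau$-closed subspace $\ell^2(K) \supseteq \widehat{\C[K]}$, the infimum of $\|\widehat{\lambda(h)} - \widehat{y}\|_\tau$ over $y \in \C[K]$ equals $\|\delta_h\|_{\ell^2(G)} = 1$, achieved at $y = 0$. This is the only subtlety, and it dissolves immediately once one passes to the GNS Hilbert space.
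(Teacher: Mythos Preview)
Your proof is correct and follows essentially the same approach as the paper: reduce to a single lower bound via the comparisons in \Cref{comparisons} (specifically \Cref{dMT-facts}(3), \Cref{dMT-dKK}, \Cref{dMT-SOT}, \Cref{dMT=dC}), then exhibit a group element $h$ in one subgroup but not the other and use orthogonality of $\delta_h$ to $\ell^2(K)$ in $L^2(L(G),\tau)\cong\ell^2(G)$. The paper phrases the lower bound for $d_{MT}$ and splits into two cases ($H\subsetneq K$ versus $H\neq H\cap K\neq K$), whereas you go directly to $d_C$ and avoid the case split by simply choosing $h\in H\setminus K$ after a possible swap; this is a harmless streamlining of the same argument.
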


\begin{proof}
By \Cref{dMT-dKK}, \( d_{MT} \big(\mathbb{C}[H],  \mathbb{C}[K]\big) \leq d_{KK}
\big(\mathbb{C}[H],  \mathbb{C}[K]\big) \leq 1\). So, in view of \Cref{dMT-SOT}
and \Cref{dMT=dC}, it is enough to show that \( d_{MT} \big(\mathbb{C}[H], 
\mathbb{C}[K]\big) \geq 1\).

We prove this by considering the following two
possibilities separately.\smallskip

\noindent \textbf{Case 1.} Suppose that  $H\subsetneq K$ or $K\subsetneq H$.

Without loss of generality, assume that $H \subsetneq K$. Then,
$\mathbb{C}[H]\subsetneq \mathbb{C}[K]$ and \( d_{\tau}\Big(\widehat{a},
\widehat{B_{1}(\mbb{C}[K])}\Big)=0 \) for all $a\in
B_{1}(\mbb{C}[H])$. Further, for any $k
\in K\setminus H \cap K$, we have, \( \|\widehat{k}- \widehat{x} \|_{\tau}\geq 1 \) for all $x \in B_{1}(\mbb{C}[H])$. Thus,
\[
\sup_{z \in B_1(\C[K]))}d_{\tau}\Big(\widehat{z}, \widehat{B_{1}(\mbb{C}[H])}\Big)\geq 1;
\] 
and, hence, $d_{MT}(\mathbb{C}[H], \mathbb{C}[K])\geq 1$. \smallskip

\noindent \textbf{Case 2.} Suppose that $H \neq H\cap K \neq K$.

Then, again for any $h\in H\setminus H\cap K $, we have  
\(
\|\widehat{h}-\widehat{x}\|_{\tau}\geq 1
\) for all $x\in B_{1}(\mbb{C}[K])$.
Thus, as above, $d_{MT}(\mathbb{C}[H], \mathbb{C}[K])\geq 1$, and we are done.
\end{proof}

\end{document}